\date{\today}
\newtheorem{theorem}{Theorem}[section]
\newtheorem{proposition}[theorem]{Proposition}
\newtheorem{corollary}[theorem]{Corollary}
\newtheorem{lemma}[theorem]{Lemma}
\theoremstyle{definition}
\newtheorem{example}[theorem]{Example}
\newtheorem{remark}[theorem]{Remark}
\newtheorem{definition}[theorem]{Definition}
\begin{document}

\title[On inverse submonoids of the monoid ...]{On inverse submonoids of the monoid of almost monotone injective co-finite partial selfmaps of
positive integers}

\author[O.~Gutik and A.~Savchuk]{Oleg~Gutik and Anatolii~Savchuk}
\address{Faculty of Mathematics, Ivan Franko National
University of Lviv, Universytetska 1, Lviv, 79000, Ukraine}
\email{ogutik@gmail.com, ovgutik@yahoo.com, asavchuk1@meta.ua, asavchuk3333@gmail.com}

\keywords{Semigroup of isometries, inverse semigroup, partial bijection, congruence, bicyclic semigroup,  semitopological semigroup, topological semigroup, discrete topology, embedding, Bohr compactification}

\subjclass[2010]{20M18, 20M20, 20M30, 22A15,  54D40, 54D45, 54H10}

\begin{abstract}
In this paper we study submonoids of the monoid $\mathscr{I}_{\infty}^{\,\Rsh\!\!\!\nearrow}(\mathbb{N})$ of almost monotone injective co-finite partial selfmaps of positive integers $\mathbb{N}$.
Let $\mathscr{I}_{\infty}^{\!\nearrow}(\mathbb{N})$ be a submonoid of $\mathscr{I}_{\infty}^{\,\Rsh\!\!\!\nearrow}(\mathbb{N})$ which consists of cofinite monotone partial bijections of $\mathbb{N}$ and $\mathscr{C}_{\mathbb{N}}$ be a subsemigroup of $\mathscr{I}_{\infty}^{\,\Rsh\!\!\!\nearrow}(\mathbb{N})$ which is generated by the partial shift $n\mapsto n+1$ and its inverse partial map.
We show that every automorphism of a full inverse subsemigroup of $\mathscr{I}_{\infty}^{\!\nearrow}(\mathbb{N})$ which contains the semigroup $\mathscr{C}_{\mathbb{N}}$ is the identity map. We construct a submonoid $\mathbf{I}\mathbb{N}_{\infty}^{[\underline{1}]}$ of $\mathscr{I}_{\infty}^{\,\Rsh\!\!\!\nearrow}(\mathbb{N})$ with the following property: if $S$ is an inverse submonoid of $\mathscr{I}_{\infty}^{\,\Rsh\!\!\!\nearrow}(\mathbb{N})$ such that $S$ contains $\mathbf{I}\mathbb{N}_{\infty}^{[\underline{1}]}$ as a submonoid, then every non-identity congruence $\mathfrak{C}$ on $S$ is a group congruence.
We show that if $S$ is an inverse submonoid of $\mathscr{I}_{\infty}^{\,\Rsh\!\!\!\nearrow}(\mathbb{N})$ such that $S$ contains $\mathscr{C}_{\mathbb{N}}$ as a submonoid then $S$ is simple and the quotient semigroup $S/\mathfrak{C}_{\mathbf{mg}}$, where $\mathfrak{C}_{\mathbf{mg}}$ is minimum group congruence on $S$, is isomorphic to the additive group of integers. Also, we study topologizations of inverse submonoids of $\mathscr{I}_{\infty}^{\,\Rsh\!\!\!\nearrow}(\mathbb{N})$ which contain $\mathscr{C}_{\mathbb{N}}$ and embeddings of  such semigroups into compact-like topological semigroups.
\end{abstract}

\maketitle


\section{Introduction and preliminaries}
In this paper all spaces will be
assumed to be Hausdorff. Furthermore
we shall follow the terminology of \cite{Carruth-Hildebrant-Koch-1983-1986, Clifford-Preston-1961-1967, Engelking-1989, Lawson-1998, Ruppert-1984}.
 We shall denote the set of all positive integers by $\mathbb{N}$, the first infinite ordinal by $\omega$
 and  the cardinality of the set $A$
 by
$|A|$. If $A$ is a subset of a semigroup $S$ then by $\langle A\rangle$  we shall
denote a subsemigroup of $S$ generated by the elements of the set $A$.

An algebraic semigroup $S$ is called {\it inverse} if for any
element $x\in S$ there exists a unique $x^{-1}\in S$ such that
$xx^{-1}x=x$ and $x^{-1}xx^{-1}=x^{-1}$. The element $x^{-1}$ is
called the {\it inverse of} $x\in S$. If $S$ is an inverse
semigroup, then the function $\operatorname{inv}\colon S\to S$
which assigns to every element $x$ of $S$ its inverse element
$x^{-1}$ is called an {\it inversion}.

A congruence $\mathfrak{C}$ on a semigroup $S$ is called
\emph{non-trivial} if $\mathfrak{C}$ is distinct from universal and
identity congruences on $S$, and a \emph{group congruence} if the quotient
semigroup $S/\mathfrak{C}$ is a group. If $\mathfrak{C}$ is a congruence on a semigroup $S$ then by $\mathfrak{C}^{\sharp}$ we denote the natural homomorphism from $S$ onto the quotient semigroup $S/\mathfrak{C}$.

If $S$ is a semigroup, then we shall denote the subset of all
idempotents in $S$ by $E(S)$. If $S$ is an inverse semigroup, then
$E(S)$ is closed under multiplication and we shall refer to $E(S)$ a as
\emph{band} (or the \emph{band of} $S$). Then the semigroup
operation on $S$ determines the following partial order $\preccurlyeq$
on $E(S)$: $e\preccurlyeq f$ if and only if $ef=fe=e$. This order is
called the {\em natural partial order} on $E(S)$. A
\emph{semilattice} is a commutative semigroup of idempotents.

An inverse subsemigroup $T$ of an inverse semigroup $S$ is called \emph{full} if $E(S)=E(T)$.

By
$(\mathscr{P}_{<\omega}(\lambda),\cup)$ we shall denote the
\emph{free semilattice with identity} over a set of cardinality
$\lambda\geqslant\omega$, i.e.,
$(\mathscr{P}_{<\omega}(\lambda),\cup)$ is the set of all finite
subsets (with the empty set) of $\lambda$ with the semilattice
operation ``union''.

If $S$ is a semigroup, then we shall denote the Green relations on $S$ by $\mathscr{R}$, $\mathscr{L}$, $\mathscr{J}$, $\mathscr{D}$ and $\mathscr{H}$ (see \cite{Clifford-Preston-1961-1967}). A semigroup $S$ is called \emph{simple} if $S$ does not contain proper two-sided ideals and \emph{bisimple} if $S$ has only one $\mathscr{D}$-class.

A (\emph{semi})\emph{topological} \emph{semigroup} is a topological space with a (separately) continuous semigroup operation. An inverse topological semigroup with continuous inversion is called a \emph{topological inverse semigroup}.

A topology $\tau$ on a semigroup $S$ is called:
\begin{itemize}
  \item \emph{semigroup} if $(S,\tau)$ is a topological semigroup;
  \item \emph{semigroup inverse} if $S$ is an inverse semigroup and $(S,\tau)$ is a topological inverse semigroup;
  \item \emph{shift-continuous} if $(S,\tau)$ is a semitopological semigroup.
\end{itemize}

The \emph{bicyclic semigroup} (or the \emph{bicyclic monoid}) $\mathscr{C}(p,q)$ is the semigroup with the identity $1$ generated by two elements $p$ and $q$, subject only to the condition $pq=1$.

The bicyclic semigroup is bisimple and every one of its congruences is either trivial or a group congruence. Moreover, every homomorphism $h$ of the bicyclic semigroup is either an isomorphism or the image of ${\mathscr{C}}(p,q)$ under $h$ is a cyclic group~(see \cite[Corollary~1.32]{Clifford-Preston-1961-1967}). The bicyclic semigroup plays an important role in algebraic theory of semigroups and in the theory of topological semigroups. For example a well-known Andersen's result~\cite{Andersen-1952} states that a ($0$--)simple semigroup with an idempotent is completely ($0$--)simple if and only if it does not contain an isomorphic copy of the bicyclic semigroup. The bicyclic monoid admits only the discrete semigroup Hausdorff topology. Bertman and  West in \cite{Bertman-West-1976} extended this result for the case of Hausdorff semitopological semigroups. Stable and $\Gamma$-compact topological semigroups do not contain the bicyclic monoid~\cite{Anderson-Hunter-Koch-1965, Hildebrant-Koch-1986}. The problem of embedding of the bicyclic monoid into compact-like topological semigroups was studied in \cite{Banakh-Dimitrova-Gutik-2009, Banakh-Dimitrova-Gutik-2010, Gutik-Repovs-2007}.
Independently to Eberhart-Selden results on topolozabilty of the bicyclic semigroup, in \cite{Taimanov-1973} Taimanov constructed a commutative semigroup $\mathfrak{A}_\kappa$ of cardinality $\kappa$ which admits only the discrete semigroup topology. Also, Taimanov \cite{Taimanov-1975} gave sufficient conditions for a commutative semigroup to have a non-discrete semigroup topology. In the paper \cite{Gutik-2016} it was showed that for the Taimanov semigroup $\mathfrak{A}_\kappa$ from \cite{Taimanov-1973} the following conditions hold:
every $T_1$-topology $\tau$ on the semigroup $\mathfrak{A}_\kappa$ such that $(\mathfrak{A}_\kappa,\tau)$  is a topological semigroup is discrete; $\mathfrak A_\kappa$ is closed in any $T_1$-topological semigroup containing $\mathfrak A_\kappa$
and every homomorphic non-isomorphic image of $\mathfrak{A}_\kappa$ is a zero-semigroup.

Non-discrete topologizations of some bicyclic-like semigroups were studied in \cite{Bardyla-2016, Bardyla-2016a, Bardyla-2018, Bardyla-2018a, Bardyla-20??, Bardyla-Gutik-2016, Gutik-2015, Gutik-Maksymyk-2016, Hogan-1984, Mesyan-Mitchell-Morayne-Peresse-2016, Selden_A-1985}.
In particular in \cite{Fihel-Gutik-2011} it is proved that the discrete topology is the unique shift-continuous Hausdorff topology on the extended bicyclic semigroup $\mathscr{C}_{\mathbb{Z}}$. We observe that for many ($0$-)bisimple semigroups   $S$ the following statement holds: \emph{every shift-continuous Hausdorff Baire (in particular locally compact) topology on $S$ is discrete} (see \cite{Chuchman-Gutik-2010, Gutik-2018, Gutik-Maksymyk-2016a, Gutik-Pozdnyakova-2014, Gutik-Repovs-2011, Gutik-Repovs-2012}).

Let $\mathscr{I}_\lambda$ denote the set of all partial one-to-one
transformations of a set $X$ of cardinality $\lambda$ together
with the following semigroup operation:
\begin{equation*}
x(\alpha\beta)=(x\alpha)\beta \quad \hbox{if} \quad x\in\operatorname{dom}(\alpha\beta)=\{
y\in\operatorname{dom}\alpha\mid y\alpha\in\operatorname{dom}\beta\}, \qquad  \hbox{for} \quad
\alpha,\beta\in\mathscr{I}_\lambda.
\end{equation*}
 The semigroup
$\mathscr{I}_\lambda$ is called the \emph{symmetric inverse
semigroup} over the set $X$~(see \cite{Clifford-Preston-1961-1967}). The symmetric inverse
semigroup was introduced by Wagner~\cite{Wagner-1952} and it plays
a major role in the theory of semigroups.

\begin{remark}\label{remark-1.1}
We observe that the bicyclic semigroup is isomorphic to the
semigroup $\mathscr{C}_{\mathbb{N}}$ which is
generated by partial transformations $\alpha$ and $\beta$ of the set
of positive integers $\mathbb{N}$, defined as follows:
\begin{equation*}
\operatorname{dom}\alpha=\mathbb{N}, \qquad \operatorname{ran}\alpha=\mathbb{N}\setminus\{1\},  \qquad (n)\alpha=n+1
\end{equation*}
and
\begin{equation*}
\operatorname{dom}\beta=\mathbb{N}\setminus\{1\}, \qquad \operatorname{ran}\beta=\mathbb{N},  \qquad (n)\beta=n-1
\end{equation*}
(see Exercise~IV.1.11$(ii)$ in \cite{Petrich-1984}).
\end{remark}

Let $\mathbb{N}$ be the set of all positive integers. We shall denote
the semigroup of monotone, non-decreasing, injective partial
transformations $\varphi$ of $\mathbb{N}$ such that the sets
$\mathbb{N}\setminus\operatorname{dom}\varphi$ and
$\mathbb{N}\setminus\operatorname{rank}\varphi$ are finite by
$\mathscr{I}_{\infty}^{\!\nearrow}(\mathbb{N})$.
Obviously, $\mathscr{I}_{\infty}^{\!\nearrow}(\mathbb{N})$ is an
inverse subsemigroup of the semigroup $\mathscr{I}_\omega$. The
semigroup $\mathscr{I}_{\infty}^{\!\nearrow}(\mathbb{N})$ is
called \emph{the semigroup of cofinite monotone partial
bijections} of $\mathbb{N}$.

In \cite{Gutik-Repovs-2011} Gutik and Repov\v{s} studied the
semigroup $\mathscr{I}_{\infty}^{\!\nearrow}(\mathbb{N})$. They showed that the semigroup
$\mathscr{I}_{\infty}^{\!\nearrow}(\mathbb{N})$ has algebraic
properties similar to the bicyclic semigroup: it is bisimple and all
of its non-trivial group homomorphisms are either isomorphisms or
group homomorphisms. Also, they proved that every locally compact inverse semigroup topology
$\tau$ on $\mathscr{I}_{\infty}^{\!\nearrow}(\mathbb{N})$ is discrete and described the closure
of $(\mathscr{I}_{\infty}^{\!\nearrow}(\mathbb{N}),\tau)$ in a
topological semigroup.

Doroshenko in \cite{Doroshenko2005, Doroshenko2009} studied the semigroups of endomorphisms of linearly ordered sets $\mathbb{N}$ and $\mathbb{Z}$ and their subsemigroups of cofinite endomorphisms $\mathcal{O}_{fin}(\mathbb{N})$ and $\mathcal{O}_{fin}(\mathbb{Z})$. In \cite{Doroshenko2009} he described the Green relations, groups of automorphisms, conjugacy, centralizers of elements, growth, and free subsemigroups in these subgroups. Especially in \cite{Doroshenko2009} it is proved that the  group  of  automorphisms  consists  only  of  the  identity  mapping,  whereas  the  groups  of  automorphisms of $\mathcal{O}_{fin}(\mathbb{Z})$ is isomorphic to the semigroup of integers with operation of addition and consist only of inner automorphisms.
In \cite{Doroshenko2005} there was shown  that  both these semigroups do not admit an irreducible system of generators. In their  subsemigroups of cofinite functions all irreducible systems of generators are
described there. Also, here the last semigroups are presented in terms of generators and relations.

A partial map $\alpha\colon \mathbb{N}\rightharpoonup \mathbb{N}$ is
called \emph{almost monotone} if there exists a finite subset $A$ of
$\mathbb{N}$ such that the restriction
$\alpha\mid_{\mathbb{N}\setminus A}\colon \mathbb{N}\setminus
A\rightharpoonup \mathbb{N}$ is a monotone partial map.

By $\mathscr{I}_{\infty}^{\,\Rsh\!\!\!\nearrow}(\mathbb{N})$ we
shall denote the semigroup of monotone, almost non-decreasing,
injective partial transformations of $\mathbb{N}$ such that the sets
$\mathbb{N}\setminus\operatorname{dom}\varphi$ and
$\mathbb{N}\setminus\operatorname{rank}\varphi$ are finite for all
$\varphi\in\mathscr{I}_{\infty}^{\,\Rsh\!\!\!\nearrow}(\mathbb{N})$.
Obviously, $\mathscr{I}_{\infty}^{\,\Rsh\!\!\!\nearrow}(\mathbb{N})$
is an inverse subsemigroup of the semigroup $\mathscr{I}_\omega$ and
the semigroup $\mathscr{I}_{\infty}^{\!\nearrow}(\mathbb{N})$ is an
inverse subsemigroup of
$\mathscr{I}_{\infty}^{\,\Rsh\!\!\!\nearrow}(\mathbb{N})$ too. The
semigroup $\mathscr{I}_{\infty}^{\,\Rsh\!\!\!\nearrow}(\mathbb{N})$
is called \emph{the semigroup of co-finite almost monotone partial
bijections} of $\mathbb{N}$.

In the paper \cite{Chuchman-Gutik-2010} the semigroup
$\mathscr{I}_{\infty}^{\,\Rsh\!\!\!\nearrow}(\mathbb{N})$ is studied. It was shown that the semigroup
$\mathscr{I}_{\infty}^{\,\Rsh\!\!\!\nearrow}(\mathbb{N})$ has
algebraic properties similar to the bicyclic semigroup: it is
bisimple and all of its non-trivial group homomorphisms are either
isomorphisms or group homomorphisms. Also it was proved that every Baire shift-continuous
$T_1$-topology $\tau$ on
$\mathscr{I}_{\infty}^{\,\Rsh\!\!\!\nearrow}(\mathbb{N})$  is discrete, described the closure of
$(\mathscr{I}_{\infty}^{\,\Rsh\!\!\!\nearrow}(\mathbb{N}),\tau)$ in
a topological semigroup and constructed non-discrete Hausdorff
semigroup topologies on
$\mathscr{I}_{\infty}^{\,\Rsh\!\!\!\nearrow}(\mathbb{N})$.

A partial transformation $\alpha\colon (X,d)\rightharpoonup (X,d)$ of a metric space $(X,d)$ is called \emph{isometric} or a \emph{partial isometry}, if $d(x\alpha,y\alpha)=d(x,y)$ for all $x,y\in \operatorname{dom}\alpha$. It is obvious that the composition of two partial isometries of a metric space $(X,d)$ is a partial isometry, and the converse partial map to a partial isometry is a partial isometry. Hence the set of partial isometries of a metric space $(X,d)$ with the operation of composition of partial isometries is an inverse submonoid of the symmetric inverse monoid over the set $X$.

Let $\mathbf{I}\mathbb{N}_{\infty}$ be the set of all partial cofinite isometries of the set of positive integers $\mathbb{N}$ with the usual metric $d(n,m)=|n-m|$, $n,m\in \mathbb{N}$. Then $\mathbf{I}\mathbb{N}_{\infty}$ with the operation of composition of partial isometries is an inverse submonoid of $\mathscr{I}_\omega$. The semigroup $\mathbf{I}\mathbb{N}_{\infty}$ of all partial co-finite isometries of positive integers is studied in \cite{Gutik-Savchuk-2018}. There we describe the Green relations on the semigroup $\mathbf{I}\mathbb{N}_{\infty}$, its band and proved that $\mathbf{I}\mathbb{N}_{\infty}$ is a simple $E$-unitary $F$-inverse semigroup. Also in \cite{Gutik-Savchuk-2018}, the least group congruence $\mathfrak{C}_{\mathbf{mg}}$ on $\mathbf{I}\mathbb{N}_{\infty}$ is described and proved that the quotient-semigroup  $\mathbf{I}\mathbb{N}_{\infty}/\mathfrak{C}_{\mathbf{mg}}$ is isomorphic to the additive group of integers $\mathbb{Z}(+)$. An example of a non-group congruence on the semigroup $\mathbf{I}\mathbb{N}_{\infty}$ is presented. Also we proved that a congruence on the semigroup $\mathbf{I}\mathbb{N}_{\infty}$ is group if and only if its restriction onto an isomorphic  copy of the bicyclic semigroup in $\mathbf{I}\mathbb{N}_{\infty}$ is a group congruence.

In this paper we show that every automorphism of a full inverse subsemigroup of $\mathscr{I}_{\infty}^{\!\nearrow}(\mathbb{N})$ which contains the semigroup $\mathscr{C}_{\mathbb{N}}$ is the identity map. We construct a submonoid $\mathbf{I}\mathbb{N}_{\infty}^{[\underline{1}]}$ of $\mathscr{I}_{\infty}^{\,\Rsh\!\!\!\nearrow}(\mathbb{N})$ with the following property: if $S$ be an inverse subsemigroup of $\mathscr{I}_{\infty}^{\,\Rsh\!\!\!\nearrow}(\mathbb{N})$ such that $S$ contains $\mathbf{I}\mathbb{N}_{\infty}^{[\underline{1}]}$ as a submonoid, then every non-identity congruence $\mathfrak{C}$ on $S$ is a group congruence. We show that if $S$ is an inverse submonoid of $\mathscr{I}_{\infty}^{\,\Rsh\!\!\!\nearrow}(\mathbb{N})$ such that $S$ contains $\mathscr{C}_{\mathbb{N}}$ as a subsubmonoid then $S$ is simple and the quotient semigroup $S/\mathfrak{C}_{\mathbf{mg}}$, where $\mathfrak{C}_{\mathbf{mg}}$ is minimum group congruence on $S$, is isomorphic to the additive group of integers. Also, we study topologizations of inverse submonoids of $\mathscr{I}_{\infty}^{\,\Rsh\!\!\!\nearrow}(\mathbb{N})$ which contain $\mathscr{C}_{\mathbb{N}}$ and embeddings of  such semigroups into compact-like topological semigroups.

\section{Main algebraic results}\label{section-2}

We recall for a semigroup $S$ a homomorphism $\Phi\colon S\to S$ is called an \emph{endomorphism of} $S$ and every bijective endomorphism (isomorphism) $\Phi\colon S\to S$ is called an \emph{automorphism of} $S$. We observe that in the case when $S$ is a monoid with the unit $1_S$, then an endomorphism $\Phi\colon S\to S$ with $(1_S)\Phi=1_S$ is called a \emph{monoid endomorphism}. It is obvious that $(1_S)\Phi=1_S$ for any automorphism $\Phi\colon S\to S$ of a monoid with the unit $1_S$.

Recall \cite{Munn-1983} a semigroup $S$ is combinatorial if it has no non-trivial subgroups. A regular (an inverse) semigroup $S$ is combinatorial if all its $\mathscr{H}$-classes are singleton. It is obvious that any subsemigroup of a combinatorial semigroup is combinatorial.

\begin{lemma}\label{lemma-2.1-2}
Let $\Psi\colon S\to S$ be an automorphism of a combinatorial inverse semigroup $S$. If $(e)\Psi=e$ for all $e\in E(S)$ then $\Psi$ is the identity map.
\end{lemma}

\begin{proof}
Fix an arbitrary $s\in S\setminus E(S)$. Then $(ss^{-1})\Psi=ss^{-1}$ and $(s^{-1}s)\Psi=s^{-1}s$. Since in any inverse semigroup the following condition hold: $x\mathscr{H}y$ if and only if $xx^{-1}=yy^{-1}$ and $x^{-1}x=y^{-1}y$ (see \cite[Section 3.2, p.~82]{Lawson-1998}), we have that \begin{equation*}
(s)\Psi(s^{-1})\Psi=(ss^{-1})\Psi=ss^{-1} \qquad \hbox{and} \qquad (s^{-1})\Psi(s)\Psi=(s^{-1}s)\Psi=s^{-1}s,
\end{equation*}
and hence $(s)\Psi\mathscr{H}s$. Since $S$ is a combinatorial inverse semigroup, $(s)\Psi=s$.
\end{proof}

For any positive integer $i$ by $\varepsilon(i)$ we denote the identity map of the set $\mathbb{N}\setminus\{i\}$. It is obvious that $\varepsilon(i)\in E(\mathbf{I}\mathbb{N}_{\infty})$ for any positive integer $i$.

\begin{lemma}\label{lemma-2.1-3}
Let $S$ be a full inverse submonoid of $\mathscr{I}_{\infty}^{\!\nearrow}(\mathbb{N})$ and $\Phi\colon S\to S$ be an automorphism. Then $(\varepsilon(1))\Phi=\varepsilon(1)$.
\end{lemma}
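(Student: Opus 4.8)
The plan is to pin down the structure of the idempotents of $\mathscr{I}_{\infty}^{\!\nearrow}(\mathbb{N})$ first. Every idempotent of a symmetric inverse semigroup is a partial identity, so the idempotents of $S$ are exactly the identity maps $\varepsilon_A$ on cofinite subsets $\mathbb{N}\setminus A$ with $A$ finite; since $S$ is \emph{full}, $E(S)=E(\mathscr{I}_{\infty}^{\!\nearrow}(\mathbb{N}))$ consists of all such maps. The natural partial order is reverse inclusion of domains, so the maximal nonidentity idempotents are precisely those removing a single point, namely the $\varepsilon(i)$. An automorphism $\Phi$ preserves the band $E(S)$, the multiplication on it, and hence the natural partial order; therefore $\Phi$ permutes the set $\{\varepsilon(i):i\in\mathbb{N}\}$ of coatoms of the semilattice $E(S)$.

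The key point is to distinguish $\varepsilon(1)$ from all other $\varepsilon(i)$ by an algebraic (hence $\Phi$-invariant) property. The distinguishing feature I expect to use is the monotone shift $\alpha\in\mathscr{C}_{\mathbb{N}}\subseteq S$ with $(n)\alpha=n+1$: its domain is all of $\mathbb{N}$ and its range is $\mathbb{N}\setminus\{1\}$, so $\alpha^{-1}\alpha=\varepsilon(1)$ while $\alpha\alpha^{-1}$ is the identity on $\mathbb{N}$. More usefully, for each $i$ one can analyse how many elements $s\in S$ satisfy $s^{-1}s=\varepsilon(i)$ together with a monotonicity/range constraint, or analyse the down-set of each coatom inside the semilattice generated together with $\alpha$. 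First I would isolate an intrinsic characterisation: $\varepsilon(1)$ is the unique coatom $\varepsilon$ of $E(S)$ for which there exists $s\in S$ with $ss^{-1}=1_S$ (the identity on $\mathbb{N}$) and $s^{-1}s=\varepsilon$; indeed a monotone cofinite injection whose range misses only the point $i$ and whose domain is all of $\mathbb{N}$ forces, by monotonicity, that the single omitted point is the least point, i.e.\ $i=1$. Since $\Phi$ fixes $1_S$ (it is a monoid automorphism) and preserves products and inverses, this property is preserved, and therefore $(\varepsilon(1))\Phi=\varepsilon(1)$.

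The main obstacle will be making the italicised characterisation airtight, in two directions. One must verify that no monotone element of $S$ has full domain while omitting a single interior point from its range — this is where the \emph{monotone, non-decreasing, injective} hypothesis on $\mathscr{I}_{\infty}^{\!\nearrow}(\mathbb{N})$ does the work, since a monotone injection of $\mathbb{N}$ with full domain must send $n\mapsto$ an increasing sequence and so can only skip initial values, forcing the missing point to be minimal. One must also confirm that such an $s$ (for instance $\alpha$ itself) actually lies in $S$, which holds because $\mathscr{C}_{\mathbb{N}}\subseteq S$ by hypothesis. I would present these two verifications as the substantive content, and note that monotonicity is essential: in the larger semigroup $\mathscr{I}_{\infty}^{\,\Rsh\!\!\!\nearrow}(\mathbb{N})$ the \emph{almost} monotone maps would allow interior omissions and the coatom $\varepsilon(1)$ would no longer be algebraically distinguished, which is exactly why the hypothesis restricts to $\mathscr{I}_{\infty}^{\!\nearrow}(\mathbb{N})$.
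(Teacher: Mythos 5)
Your preparatory steps are fine and agree with the paper's setup: the idempotents of $S$ are exactly the partial identities of cofinite subsets of $\mathbb{N}$ (fullness gives all of them), the natural partial order is reverse inclusion of the removed finite sets, and any automorphism $\Phi$ fixes $\mathbb{I}$ and permutes the coatoms $\varepsilon(i)$. (Like the paper's own proof, you also need $\mathscr{C}_{\mathbb{N}}\subseteq S$, which is not literally in the statement but is the intended reading from Theorem~\ref{theorem-2.1-6}; without it the claim is false by Remark~\ref{remark-2.1-9}.) The fatal problem is your distinguishing property. You claim $\varepsilon(1)$ is the \emph{unique} coatom $\varepsilon$ for which some $s\in S$ satisfies $ss^{-1}=\mathbb{I}$ and $s^{-1}s=\varepsilon$, on the grounds that a monotone injection with domain $\mathbb{N}$ ``can only skip initial values.'' That is false: monotone does not mean isometric. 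For any $i\geqslant 2$ define $s$ by $\operatorname{dom}s=\mathbb{N}$, $(n)s=n$ for $n<i$ and $(n)s=n+1$ for $n\geqslant i$. This map is strictly increasing and injective, has full domain and range $\mathbb{N}\setminus\{i\}$, hence lies in $\mathscr{I}_{\infty}^{\!\nearrow}(\mathbb{N})$ with $ss^{-1}=\mathbb{I}$ and $s^{-1}s=\varepsilon(i)$. So already in the most important instance of the lemma, $S=\mathscr{I}_{\infty}^{\!\nearrow}(\mathbb{N})$ itself (the case behind Corollary~\ref{corollary-2.1-7}), your property holds for \emph{every} coatom and distinguishes nothing. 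There is a structural reason no repair along these lines can work: the existence of $s$ with $ss^{-1}=\mathbb{I}$ and $s^{-1}s=\varepsilon$ says precisely that $\varepsilon\,\mathscr{D}\,\mathbb{I}$, and $\mathscr{I}_{\infty}^{\!\nearrow}(\mathbb{N})$ is \emph{bisimple} (as the paper recalls from \cite{Gutik-Repovs-2011}), so all of its idempotents lie in one $\mathscr{D}$-class. Any attempted characterisation of $\varepsilon(1)$ through its $\mathscr{D}$-relation to the identity is therefore vacuous. (Your picture is correct in the isometry monoid $\mathbf{I}\mathbb{N}_{\infty}$, where a full-domain element must be a translation $n\mapsto n+k$ and only initial segments can be omitted from the range; but the lemma lives in the much larger monotone semigroup. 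Your closing remark contrasting monotone with almost monotone maps rests on the same misconception.)

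This is exactly why the paper's proof works harder. It assumes $(\varepsilon(1))\Phi\neq\varepsilon(1)$, uses $\varepsilon(1)=\beta\alpha$ with $\alpha,\beta$ the generators of $\mathscr{C}_{\mathbb{N}}$, and studies the image bicyclic monoid $\langle(\alpha)\Phi,(\beta)\Phi\rangle$: first it shows every element of this image fixes the point $1$; then, invoking the fact that for every idempotent $\iota$ one has $\iota\cdot\beta^n\alpha^n=\beta^n\alpha^n$ for all sufficiently large $n$ (Lemma~2.6 of \cite{Gutik-Repovs-2011}), it deduces from $(\iota)\Phi=\varepsilon(1)$ that $1$ is eventually removed from the relevant domains, contradicting the fixed-point property. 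In other words, the paper distinguishes $\varepsilon(1)$ not by a $\mathscr{D}$-class property but by how idempotents act on the tail of the bicyclic submonoid under $\Phi$; by the bisimplicity obstruction above, some input of that kind is unavoidable, and your proposal is missing it.
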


\begin{proof}
Since $\Phi\colon S\to S$ is an automorphism, $(\mathbb{I})\Phi=\mathbb{I}$. Suppose to the contrary that $(\varepsilon(1))\Phi\neq\varepsilon(1)$. Since the restriction $\Phi|_{E(S)\setminus\{\mathbb{I}\}}\colon E(S)\setminus\{\mathbb{I}\}\to E(S)\setminus\{\mathbb{I}\}$ of the automorphism $\Phi$ onto $E(S)\setminus\{\mathbb{I}\}$ is an automorphism, there exist (not necessary distinct) idempotents $\iota,\upsilon\in S\setminus \{\mathbb{I},\varepsilon(1)\}$ such that $(\varepsilon(1))\Phi=\upsilon$, $(\iota)\Phi=\varepsilon(1)$ and $|\mathbb{N}\setminus\operatorname{dom}\upsilon|= |\mathbb{N}\setminus\operatorname{dom}\iota|=1$.

We shall show that $1\in \operatorname{dom}\phi\cap\operatorname{ran}\phi$ and moreover $(1)\phi=1$ for any $\phi\in \langle(\alpha)\Phi,(\beta)\Phi\rangle$. Our assumption implies that $\varepsilon(1))=\beta\alpha$ and hence \begin{equation*}
(1)(\beta\alpha)\Phi=1=(1)(\alpha\beta)\Psi=(1)(\mathbb{I})\Phi=(1)\mathbb{I}=1.
\end{equation*}
This implies that $1\in \operatorname{dom}(\alpha)\Phi$ and $1\in \operatorname{dom}(\beta)\Phi$. If $(1)(\beta)\Phi\neq 1$ then the monotonicity of $\beta$ implies that $1\notin \operatorname{dom}(\alpha)\Phi$, and hence $1\notin \operatorname{dom}(\alpha\beta)\Phi=\mathbb{N}$, a contradiction. Since $\alpha$ is inverse of $\beta$ in $S$, the equality $(1)(\beta)\Phi=1$ implies that
\begin{equation*}
  1=(1)(\beta\alpha)\Phi=((1)(\beta)\Phi)(\alpha)\Psi=(1)(\alpha)\Phi.
\end{equation*}
This implies that $(1)(\beta^i\alpha^j)\Phi=1$ for all non-negative integers $i$ and $j$.

By Remark~\ref{remark-1.1}, $\langle\alpha,\beta\rangle$ is a submonoid of $\mathscr{I}_{\infty}^{\!\nearrow}(\mathbb{N})$ which is isomorphic to the bicyclic monoid, and since $\Phi\colon S\to S$ is an automorphism, $\langle(\alpha)\Phi,(\beta)\Phi\rangle$ is isomorphic to the bicyclic monoid, too. By Lemma~2.6 of \cite{Gutik-Repovs-2011} for every idempotent $\varepsilon\in \mathscr{I}_{\infty}^{\!\nearrow}(\mathbb{N})$ there exists a positive integer $n_\varepsilon$ such that $\varepsilon\cdot \beta^n\alpha^n=\beta^n\alpha^n$ for any positive integer $n\geqslant n_\varepsilon$. Then there exists a positive integer $n_\iota$ such that $\iota\beta^n\alpha^n=\beta^n\alpha^n$ and hence  $(\iota\beta^n\alpha^n)\Phi=(\beta^n\alpha^n)\Phi$ for all $n\geqslant n_\iota$. Since $(\iota)\Phi=\beta\alpha$ we have that $(\iota\beta^n\alpha^n)\Phi=(\iota)\Phi(\beta^n\alpha^n)\Phi=\varepsilon(1)(\beta^n\alpha^n)\Phi$ and hence $1\notin\operatorname{dom}\beta\alpha$ for all $n\geqslant n_\iota$. This contradicts the previous part of the proof. The obtained contradiction implies the statement of the lemma.
\end{proof}

\begin{lemma}\label{lemma-2.1-4}
Let $S$ be a full inverse submonoid of $\mathscr{I}_{\infty}^{\!\nearrow}(\mathbb{N})$ and $\Phi\colon S\to S$ be an automorphism. Then $(\beta^i\alpha^j)\Phi=\beta^i\alpha^j$ for all non-negative integers $i$ and $j$.
\end{lemma}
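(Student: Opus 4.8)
The plan is to reduce the whole statement to identifying the images $(\alpha)\Phi$ and $(\beta)\Phi$ of the two generators of $\mathscr{C}_{\mathbb{N}}$, and for this I will exploit the uniqueness of the order isomorphism $\mathbb{N}\to\mathbb{N}\setminus\{1\}$. First I would record the two fixed points supplied by the preceding results: since $\Phi$ is an automorphism of the monoid $S$ it fixes the unit, $(\mathbb{I})\Phi=\mathbb{I}$, and by Lemma~\ref{lemma-2.1-3} it fixes the idempotent $\varepsilon(1)$, that is $(\varepsilon(1))\Phi=\varepsilon(1)$. Recalling that in $\mathscr{C}_{\mathbb{N}}$ we have $\alpha\beta=\mathbb{I}$, $\beta\alpha=\varepsilon(1)$ and $\beta=\alpha^{-1}$, applying the homomorphism $\Phi$ yields
\begin{equation*}
(\alpha)\Phi\,(\beta)\Phi=\mathbb{I}, \qquad (\beta)\Phi\,(\alpha)\Phi=\varepsilon(1), \qquad (\beta)\Phi=\big((\alpha)\Phi\big)^{-1}.
\end{equation*}

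Next I would pin down the domain and range of the partial map $(\alpha)\Phi\in\mathscr{I}_{\infty}^{\!\nearrow}(\mathbb{N})$. For any partial bijection $\gamma$ the products $\gamma\gamma^{-1}$ and $\gamma^{-1}\gamma$ are the identity maps of $\operatorname{dom}\gamma$ and $\operatorname{ran}\gamma$ respectively, so the first displayed equality reads $\operatorname{id}_{\operatorname{dom}(\alpha)\Phi}=\mathbb{I}=\operatorname{id}_{\mathbb{N}}$ and the second reads $\operatorname{id}_{\operatorname{ran}(\alpha)\Phi}=\varepsilon(1)=\operatorname{id}_{\mathbb{N}\setminus\{1\}}$. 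Hence $\operatorname{dom}(\alpha)\Phi=\mathbb{N}$ and $\operatorname{ran}(\alpha)\Phi=\mathbb{N}\setminus\{1\}$.

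Now comes the decisive step. As an element of $\mathscr{I}_{\infty}^{\!\nearrow}(\mathbb{N})$ the map $(\alpha)\Phi$ is non-decreasing and injective, hence strictly increasing, so it is an order isomorphism of $\mathbb{N}$ onto $\mathbb{N}\setminus\{1\}$. Both $\mathbb{N}$ and $\mathbb{N}\setminus\{1\}$ are order-isomorphic to $\omega$, and between two well-orderings of type $\omega$ the order isomorphism is unique; sending the $n$-th element of $\mathbb{N}$ to the $n$-th element of $\mathbb{N}\setminus\{1\}$ forces $(n)(\alpha)\Phi=n+1$. Thus $(\alpha)\Phi=\alpha$, and consequently $(\beta)\Phi=\big((\alpha)\Phi\big)^{-1}=\alpha^{-1}=\beta$.

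Finally, since $\Phi$ is a homomorphism, for all non-negative integers $i$ and $j$ we obtain $(\beta^i\alpha^j)\Phi=\big((\beta)\Phi\big)^i\big((\alpha)\Phi\big)^j=\beta^i\alpha^j$, which is the assertion (the cases $i=0$ or $j=0$ being covered by $(\mathbb{I})\Phi=\mathbb{I}$). The only slightly delicate point is the identification of $(\alpha)\Phi$: everything hinges on pinning its domain and range down to exactly $\mathbb{N}$ and $\mathbb{N}\setminus\{1\}$ and then invoking uniqueness of the order isomorphism, so that no separate induction on $i$ and $j$ is needed.
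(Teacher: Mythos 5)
Your proof is correct, and its skeleton matches the paper's: use $(\mathbb{I})\Phi=\mathbb{I}$ and Lemma~\ref{lemma-2.1-3} to get $(\alpha)\Phi(\beta)\Phi=\mathbb{I}$ and $(\beta)\Phi(\alpha)\Phi=\varepsilon(1)$, identify the images of the two generators, then extend multiplicatively. Where you differ is in the key step $(\alpha)\Phi=\alpha$. The paper argues abstractly: it cites Proposition~2.1$(iii)$ of \cite{Gutik-Repovs-2011} that $\mathscr{I}_{\infty}^{\!\nearrow}(\mathbb{N})$ is combinatorial and reuses the Green's-relation argument of Lemma~\ref{lemma-2.1-2} --- since $(\alpha)\Phi$ and $\alpha$ have the same products with their inverses on both sides ($\mathbb{I}$ and $\varepsilon(1)$), they are $\mathscr{H}$-equivalent, hence equal because $\mathscr{H}$-classes are singletons. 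You instead read the two idempotent identities concretely as $\operatorname{dom}(\alpha)\Phi=\mathbb{N}$ and $\operatorname{ran}(\alpha)\Phi=\mathbb{N}\setminus\{1\}$, and then invoke uniqueness of the order isomorphism between well-orders of type $\omega$ to force $(n)(\alpha)\Phi=n+1$. In effect you are unfolding the citation: the combinatoriality of $\mathscr{I}_{\infty}^{\!\nearrow}(\mathbb{N})$ is proved by exactly this uniqueness-of-monotone-bijection argument, so your proof re-derives the needed special case from scratch. What your route buys is self-containedness (no external proposition, no appeal to the argument of Lemma~\ref{lemma-2.1-2}); what the paper's route buys is generality, since the $\mathscr{H}$-class argument works verbatim in any combinatorial inverse semigroup, whereas yours leans on the specific order structure of $\mathbb{N}$. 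One shared caveat, inherited from the paper rather than introduced by you: both proofs tacitly assume $\alpha,\beta\in S$ (equivalently $\mathscr{C}_{\mathbb{N}}\subseteq S$), a hypothesis not stated in the lemma itself but present in the theorem it serves.
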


\begin{proof}
By Lemma~\ref{lemma-2.1-3}, $(\beta\alpha)\Phi=(\varepsilon(1))\Phi=\varepsilon(1)=\beta\alpha$ and since $(\mathbb{I})\Phi=\mathbb{I}$, we have that
\begin{equation*}
  (\beta)\Phi(\alpha)\Phi=\beta\alpha \qquad \hbox{and} \qquad (\alpha)\Phi(\beta)\Phi=\mathbb{I}.
\end{equation*}
By Proposition~2.1$(iii)$ from \cite{Gutik-Repovs-2011} the semigroup $\mathscr{I}_{\infty}^{\!\nearrow}(\mathbb{N})$ is combinatorial and hence $S$ is combinatorial, too. Then the arguments  presented in the proof of Lemma~\ref{lemma-2.1-2} imply that $(\beta)\Phi=\beta$ and $(\alpha)\Phi=\alpha$. Therefore we get
\begin{equation*}
(\beta^i\alpha^j)\Phi=(\beta^i)\Phi(\alpha^j)\Phi=((\beta)\Phi)^i((\alpha)\Phi)^j=\beta^i\alpha^j
\end{equation*}
for all non-negative integers $i$ and $j$.
\end{proof}

\begin{lemma}\label{lemma-2.1-5}
Let $S$ be a full inverse submonoid of $\mathscr{I}_{\infty}^{\!\nearrow}(\mathbb{N})$ and $\Phi\colon S\to S$ be an automorphism. Then $(\varepsilon)\Phi=\varepsilon$ for each idempotent $\varepsilon\in S$.
\end{lemma}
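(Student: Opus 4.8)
The plan is to reduce the statement to the \emph{coatoms} of the band $E(S)$, and then to transport the single fixed coatom $\varepsilon(1)$ along the shift maps $\alpha,\beta$, which are already known to be fixed by Lemma~\ref{lemma-2.1-4}.

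First I would record the structure of $E(S)$. Since $S$ is full, $E(S)=E(\mathscr{I}_{\infty}^{\!\nearrow}(\mathbb{N}))$ consists precisely of the partial identities $\varepsilon_A$ on the cofinite subsets $A=\mathbb{N}\setminus F$ of $\mathbb{N}$, and here $\varepsilon_A\varepsilon_B=\varepsilon_{A\cap B}$ while $\varepsilon_A\preccurlyeq\varepsilon_B$ if and only if $A\subseteq B$. Thus $\mathbb{I}$ is the top element, and the elements covered by $\mathbb{I}$ in the order $\preccurlyeq$ (the coatoms) are exactly the maps $\varepsilon(i)=\varepsilon_{\mathbb{N}\setminus\{i\}}$, $i\in\mathbb{N}$; moreover every idempotent factors as $\varepsilon_{\mathbb{N}\setminus F}=\prod_{i\in F}\varepsilon(i)$. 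Because $\Phi$ is an automorphism, $\Phi|_{E(S)}$ is a semilattice automorphism fixing $\mathbb{I}$, so it permutes the coatoms; hence $(\varepsilon(i))\Phi=\varepsilon(\sigma(i))$ for some bijection $\sigma$ of $\mathbb{N}$, with $\sigma(1)=1$ by Lemma~\ref{lemma-2.1-3}.

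The key computation is the relation $\alpha\,\varepsilon(n)\,\beta=\varepsilon(n-1)$ for $n\geq 2$, together with $\alpha\,\varepsilon(1)\,\beta=\mathbb{I}$, which I would check directly from the definitions of $\alpha$ and $\beta$. Then I would prove $(\varepsilon(n))\Phi=\varepsilon(n)$ by induction on $n$, the base case $n=1$ being Lemma~\ref{lemma-2.1-3}. For the step, assume $(\varepsilon(n-1))\Phi=\varepsilon(n-1)$ with $n\geq 2$; applying $\Phi$ to the relation and using $(\alpha)\Phi=\alpha$, $(\beta)\Phi=\beta$ (Lemma~\ref{lemma-2.1-4}) yields $\alpha\,(\varepsilon(n))\Phi\,\beta=\varepsilon(n-1)$. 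Writing $(\varepsilon(n))\Phi=\varepsilon(\sigma(n))$, the same relation gives that $\alpha\,\varepsilon(\sigma(n))\,\beta$ equals $\varepsilon(\sigma(n)-1)$ when $\sigma(n)\geq 2$ and equals $\mathbb{I}$ when $\sigma(n)=1$. Since the right-hand side $\varepsilon(n-1)\neq\mathbb{I}$, we must have $\sigma(n)\geq 2$ and $\sigma(n)-1=n-1$, so $\sigma(n)=n$. Finally, for an arbitrary idempotent $\varepsilon=\varepsilon_{\mathbb{N}\setminus F}$ I would factor it as $\prod_{i\in F}\varepsilon(i)$ and conclude $(\varepsilon)\Phi=\prod_{i\in F}(\varepsilon(i))\Phi=\prod_{i\in F}\varepsilon(i)=\varepsilon$.

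The hard part will be that the relation $\alpha\,(\varepsilon(n))\Phi\,\beta=\varepsilon(n-1)$ does not by itself pin down $(\varepsilon(n))\Phi$: a direct computation shows $\alpha\,\varepsilon_B\,\beta=\varepsilon(n-1)$ holds both for $B=\mathbb{N}\setminus\{n\}$ and for $B=\mathbb{N}\setminus\{1,n\}$, since $\beta$ discards the point $1$. This ambiguity is resolved exactly by the observation that $\Phi$ sends coatoms to coatoms, which rules out $\varepsilon(1)\varepsilon(n)=\varepsilon_{\mathbb{N}\setminus\{1,n\}}$. Thus the semilattice-theoretic step is what makes the induction go through, and the remaining verifications are routine.
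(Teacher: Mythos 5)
Your proposal is correct, and it shares the paper's overall skeleton --- restrict $\Phi$ to the band, observe that corank-one idempotents $\varepsilon(i)$ must go to corank-one idempotents, settle those using Lemmas~\ref{lemma-2.1-3} and~\ref{lemma-2.1-4}, then factor a general idempotent as $\varepsilon(n_1)\cdots\varepsilon(n_k)$ --- but the mechanism for fixing the coatoms is genuinely different. The paper argues by contradiction: assuming $(\varepsilon(l))\Phi=\varepsilon(k)$ with $k>l>1$, it applies $\Phi$ to the absorption identity $\beta^l\alpha^l\cdot\varepsilon(l)=\beta^l\alpha^l$, where the idempotent $\beta^l\alpha^l$ (the identity map of $\{n\in\mathbb{N}\colon n\geqslant l+1\}$) is fixed by Lemma~\ref{lemma-2.1-4}; this yields $\beta^l\alpha^l=\beta^l\alpha^l\cdot\varepsilon(k)$, which is false since $k>l$, and the case $l>k>1$ is handled symmetrically with $\Phi^{-1}$. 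You instead run an induction along the conjugation identity $\alpha\,\varepsilon(n)\,\beta=\varepsilon(n-1)$, transporting the fixed coatom $\varepsilon(1)$ upward using the fixedness of $\alpha$ and $\beta$. Your route produces the positive identification $\sigma(n)=n$ directly rather than a two-case contradiction, and your closing observation is a genuine subtlety: the computation $\alpha\,\varepsilon_{\mathbb{N}\setminus\{1,n\}}\,\beta=\varepsilon(n-1)$ shows the conjugation relation alone cannot close the induction, so the coatom-preservation fact (which the paper invokes in the equivalent form of preservation of $|\mathbb{N}\setminus\operatorname{dom}\iota|$) is doing indispensable work in both arguments. The paper's variant is marginally shorter and avoids induction; yours isolates more cleanly where each hypothesis enters. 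Both are valid proofs.
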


\begin{proof}
Since the restriction $\Phi|_{E(S)\setminus\{\mathbb{I}\}}\colon E(S)\setminus\{\mathbb{I}\}\to E(S)\setminus\{\mathbb{I}\}$ of $\Phi$ onto $E(S)\setminus\{\mathbb{I}\}$ is an automorphism, the equality $(\iota)\Phi=\upsilon$ for $\iota,\upsilon\in E(S)\setminus \{\mathbb{I},\varepsilon(1)\}$ implies that $|\mathbb{N}\setminus\operatorname{dom}\upsilon|= |\mathbb{N}\setminus\operatorname{dom}\iota|$. Fix so elements $\iota,\upsilon\in E(S)\setminus \{\mathbb{I},\varepsilon(1)\}$ with $|\mathbb{N}\setminus\operatorname{dom}\upsilon|= |\mathbb{N}\setminus\operatorname{dom}\iota|=1$. Then there exist positive integers $k$ and $l$ such that $\upsilon=\varepsilon(k)$ and $\iota=\varepsilon(l)$. Suppose to the contrary that $\iota\neq \upsilon$. If $k>l>1$ then
\begin{equation*}
  \beta^l\alpha^l=(\beta^l\alpha^l)\Phi=(\beta^l\alpha^l\cdot \varepsilon(l))\Phi=\beta^l\alpha^l\cdot (\varepsilon(l))\Phi=\beta^l\alpha^l\cdot (\varepsilon(l))\Phi= \beta^l\alpha^l\cdot \varepsilon(k)\neq \beta^l\alpha^l.
\end{equation*}
 If $l>k>1$ then
\begin{equation*}
\begin{split}
  \beta^k\alpha^k & =(\beta^k\alpha^k)\Phi^{-1}=(\beta^k\alpha^k\cdot \varepsilon(k))\Phi^{-1}=\beta^k\alpha^k\cdot (\varepsilon(k))\Phi^{-1}=\beta^k\alpha^k\cdot (\varepsilon(k))\Phi^{-1}= \\
    & =\beta^k\alpha^k\cdot \varepsilon(l)\neq \beta^k\alpha^k.
\end{split}
\end{equation*}
The obtained contradictions and Lemma~\ref{lemma-2.1-4} imply that $(\iota)\Phi=\iota$ for every $\iota\in E(S)$ with $|\mathbb{N}\setminus\operatorname{dom}\iota|=1$.

By Proposition~2.1 of \cite{Gutik-Repovs-2011} for every idempotent $\varepsilon\in \mathscr{I}_{\infty}^{\!\nearrow}(\mathbb{N})$ there exists a finite subset $\{n_1,\ldots,n_k\}$ of positive integers such that $\varepsilon$ is the identity map of $\mathbb{N}\setminus \{n_1,\ldots,n_k\}$. This implies that $\varepsilon=\varepsilon(n_1)\cdots\varepsilon(n_k)$. Hence we get that
\begin{equation*}
  (\varepsilon)\Phi=(\varepsilon(n_1)\cdots\varepsilon(n_k))\Phi= (\varepsilon(n_1))\Phi\cdots(\varepsilon(n_k))\Phi= \varepsilon(n_1)\cdots\varepsilon(n_k)=\varepsilon,
\end{equation*}
which completes the proof of the lemma.
\end{proof}

It is well known that every automorphism $\Phi$ of the bicyclic semigroup $\mathscr{C}(p,q)$ is trivial. i.e., $\Phi$ is the identity map of $\mathscr{C}(p,q)$. The following theorem shows that every full inverse subsemigroup of $\mathscr{I}_{\infty}^{\!\nearrow}(\mathbb{N})$ which contains the semigroup $\mathscr{C}_{\mathbb{N}}$ has such property.

\begin{theorem}\label{theorem-2.1-6}
Let $S$ be a full inverse submonoid of $\mathscr{I}_{\infty}^{\!\nearrow}(\mathbb{N})$ which contains the semigroup $\mathscr{C}_{\mathbb{N}}$. Then every automorphism of $S$ is the identity map.
\end{theorem}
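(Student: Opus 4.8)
The plan is simply to assemble the preceding lemmas, since the substantive work has already been carried out. The key observation is that Lemma~\ref{lemma-2.1-5} already establishes that an arbitrary automorphism $\Phi$ of $S$ fixes every idempotent of $S$; all that remains is to upgrade ``fixes every idempotent'' to ``is the identity map.'' This upgrade is precisely the content of Lemma~\ref{lemma-2.1-2}, provided that $S$ is a combinatorial inverse semigroup.

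First I would verify the combinatoriality hypothesis. By Proposition~2.1$(iii)$ of \cite{Gutik-Repovs-2011} the semigroup $\mathscr{I}_{\infty}^{\!\nearrow}(\mathbb{N})$ is combinatorial, and since any subsemigroup of a combinatorial semigroup is combinatorial (as noted before Lemma~\ref{lemma-2.1-2}), the full inverse submonoid $S$ is combinatorial as well. Because $S$ contains $\mathscr{C}_{\mathbb{N}}=\langle\alpha,\beta\rangle$, the elements $\alpha$ and $\beta$ used throughout Lemmas~\ref{lemma-2.1-3}--\ref{lemma-2.1-5} genuinely lie in $S$, so those lemmas apply to the $S$ of the statement. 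Next I would invoke Lemma~\ref{lemma-2.1-5} to conclude that $(\varepsilon)\Phi=\varepsilon$ for every idempotent $\varepsilon\in S$. Finally, applying Lemma~\ref{lemma-2.1-2} with $\Psi=\Phi$ to the combinatorial inverse semigroup $S$ yields that $\Phi$ is the identity map, which completes the proof.

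I do not anticipate a genuine obstacle at the level of the theorem itself: the real work lives in the earlier lemmas, in particular in Lemma~\ref{lemma-2.1-3}, which pins down the behaviour of $\Phi$ on $\varepsilon(1)$ through the bicyclic structure of $\langle\alpha,\beta\rangle$, and in Lemma~\ref{lemma-2.1-5}, which propagates the fixing of single-point idempotents $\varepsilon(i)$ to arbitrary idempotents via the factorization $\varepsilon=\varepsilon(n_1)\cdots\varepsilon(n_k)$. The only point requiring mild care is to confirm that the hypothesis ``$S$ contains $\mathscr{C}_{\mathbb{N}}$'' is exactly what licenses the use of the generators $\alpha$ and $\beta$ in those lemmas, so that the chain of implications is valid precisely for the semigroups covered by the theorem.
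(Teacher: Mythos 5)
Your proposal is correct and follows essentially the same route as the paper's own proof, which likewise combines Lemma~\ref{lemma-2.1-5} (every automorphism fixes all idempotents), the combinatoriality of $\mathscr{I}_{\infty}^{\!\nearrow}(\mathbb{N})$ from Proposition~2.1 of \cite{Gutik-Repovs-2011} inherited by the subsemigroup $S$, and Lemma~\ref{lemma-2.1-2}. Your remark that the hypothesis $\mathscr{C}_{\mathbb{N}}\subseteq S$ is exactly what licenses the use of the generators $\alpha,\beta$ in Lemmas~\ref{lemma-2.1-3}--\ref{lemma-2.1-5} is a sound point of care, fully consistent with the paper's argument.
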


\begin{proof}
By Lemma~\ref{lemma-2.1-5} for each automorphism $\Phi\colon S\to S$ the band $E(\mathscr{I}_{\infty}^{\!\nearrow}(\mathbb{N}))$ is the set of fixed points of $\Phi$. By Proposition~2.1 of \cite{Gutik-Repovs-2011}, $\mathscr{I}_{\infty}^{\!\nearrow}(\mathbb{N})$ is combinatorial inverse semigroup, and hence by Proposition~3.2.11 of \cite{Lawson-1998} so is $S$. Next we apply Lemma~\ref{lemma-2.1-2}.
\end{proof}

Theorem~\ref{theorem-2.1-6} implies the following two corollaries.

\begin{corollary}\label{corollary-2.1-7}
Every automorphism  of the  semigroup $\mathscr{I}_{\infty}^{\!\nearrow}(\mathbb{N})$ is trivial.
\end{corollary}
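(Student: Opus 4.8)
The plan is to obtain this as an immediate specialization of Theorem~\ref{theorem-2.1-6}, taking $S=\mathscr{I}_{\infty}^{\!\nearrow}(\mathbb{N})$ to be the whole semigroup. Since that theorem asserts that every automorphism of a full inverse submonoid of $\mathscr{I}_{\infty}^{\!\nearrow}(\mathbb{N})$ containing $\mathscr{C}_{\mathbb{N}}$ is the identity map, it suffices to check that $\mathscr{I}_{\infty}^{\!\nearrow}(\mathbb{N})$ itself satisfies both hypotheses when regarded as a submonoid of itself.

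First I would observe that $\mathscr{I}_{\infty}^{\!\nearrow}(\mathbb{N})$ is trivially a full inverse submonoid of itself: the fullness condition $E(T)=E(S)$ holds vacuously for $T=S=\mathscr{I}_{\infty}^{\!\nearrow}(\mathbb{N})$, and it is already recorded in the preliminaries that $\mathscr{I}_{\infty}^{\!\nearrow}(\mathbb{N})$ is an inverse monoid.

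The only point deserving a line of justification is the containment $\mathscr{C}_{\mathbb{N}}\subseteq\mathscr{I}_{\infty}^{\!\nearrow}(\mathbb{N})$. By Remark~\ref{remark-1.1} we have $\mathscr{C}_{\mathbb{N}}=\langle\alpha,\beta\rangle$, where $\alpha$ is the shift $n\mapsto n+1$ and $\beta$ is $n\mapsto n-1$, each with finite complements of domain and range. Both generators are monotone non-decreasing injective partial selfmaps of $\mathbb{N}$ with cofinite domain and range, so each belongs to $\mathscr{I}_{\infty}^{\!\nearrow}(\mathbb{N})$; as the latter is closed under the semigroup operation, the subsemigroup generated by $\alpha$ and $\beta$ is contained in it.

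With both hypotheses verified, Theorem~\ref{theorem-2.1-6} applies directly and gives that every automorphism of $\mathscr{I}_{\infty}^{\!\nearrow}(\mathbb{N})$ is the identity map. There is no genuine obstacle here: the full algebraic content is already supplied by Theorem~\ref{theorem-2.1-6}, and the corollary simply records the special case $S=\mathscr{I}_{\infty}^{\!\nearrow}(\mathbb{N})$.
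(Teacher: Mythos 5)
Your proposal is correct and takes the same route as the paper, which states this corollary as an immediate consequence of Theorem~\ref{theorem-2.1-6} with $S=\mathscr{I}_{\infty}^{\!\nearrow}(\mathbb{N})$. Your explicit checks --- that fullness holds trivially for $S$ taken to be the whole monoid, and that $\mathscr{C}_{\mathbb{N}}=\langle\alpha,\beta\rangle\subseteq\mathscr{I}_{\infty}^{\!\nearrow}(\mathbb{N})$ since both generators are monotone injective partial selfmaps with cofinite domain and range --- simply spell out what the paper leaves implicit.
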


\begin{corollary}\label{corollary-2.1-8}
Every automorphism  of the  semigroup $\mathbf{I}\mathbb{N}_{\infty}$ is trivial.
\end{corollary}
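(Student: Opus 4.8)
The plan is to deduce Corollary~\ref{corollary-2.1-8} directly from Theorem~\ref{theorem-2.1-6}. To do so I would check that $\mathbf{I}\mathbb{N}_{\infty}$ is a full inverse submonoid of $\mathscr{I}_{\infty}^{\!\nearrow}(\mathbb{N})$ which contains $\mathscr{C}_{\mathbb{N}}$; once these three facts are established, the conclusion is immediate, since the corollary is then just the special case $S=\mathbf{I}\mathbb{N}_{\infty}$ of the theorem. Thus the whole task reduces to describing the elements of $\mathbf{I}\mathbb{N}_{\infty}$ concretely enough to see that they sit inside $\mathscr{I}_{\infty}^{\!\nearrow}(\mathbb{N})$ and exhaust its band.

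The crucial step is to show that every partial cofinite isometry $\alpha$ is the restriction to $\operatorname{dom}\alpha$ of a shift $n\mapsto n+c$ for a single $c\in\mathbb{Z}$; in particular it is monotone non-decreasing, whence $\mathbf{I}\mathbb{N}_{\infty}\subseteq\mathscr{I}_{\infty}^{\!\nearrow}(\mathbb{N})$. To see this, I would fix $\alpha\in\mathbf{I}\mathbb{N}_{\infty}$ and choose $N$ so large that $\{N,N+1,N+2,\ldots\}\subseteq\operatorname{dom}\alpha$, which is possible because $\operatorname{dom}\alpha$ is cofinite. On this tail $\alpha$ sends consecutive integers to integers at distance $1$, so it is either strictly increasing or strictly decreasing there; the decreasing option is excluded because the range would eventually leave $\mathbb{N}$, and hence $(n)\alpha=n+c$ for all $n\geqslant N$ and some fixed $c$. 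For an arbitrary $m\in\operatorname{dom}\alpha$ the isometry condition forces $|(m)\alpha-(n+c)|=|m-n|=n-m$ for all large $n$; of the two admissible signs, the choice $(m)\alpha-(n+c)=n-m$ would make $(m)\alpha=2n+c-m$ depend on $n$, which is impossible, so the other sign gives $(m)\alpha=m+c$. Therefore $\alpha$ is a restriction of the shift by $c$, as claimed, and its range $\operatorname{dom}\alpha+c$ is again cofinite in $\mathbb{N}$.

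It then remains to record the routine facts. The idempotents of both semigroups are exactly the identity maps on cofinite subsets of $\mathbb{N}$ (the case $c=0$ above), so $E(\mathbf{I}\mathbb{N}_{\infty})=E(\mathscr{I}_{\infty}^{\!\nearrow}(\mathbb{N}))$ and $\mathbf{I}\mathbb{N}_{\infty}$ is full; the full identity map $\mathbb{I}$ is a cofinite isometry, so $\mathbf{I}\mathbb{N}_{\infty}$ is a submonoid; and the generators $\alpha,\beta$ of $\mathscr{C}_{\mathbb{N}}$ are the restrictions of the shifts by $+1$ and $-1$, hence lie in $\mathbf{I}\mathbb{N}_{\infty}$, so that $\mathscr{C}_{\mathbb{N}}=\langle\alpha,\beta\rangle\subseteq\mathbf{I}\mathbb{N}_{\infty}$. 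Applying Theorem~\ref{theorem-2.1-6} with $S=\mathbf{I}\mathbb{N}_{\infty}$ then yields that every automorphism of $\mathbf{I}\mathbb{N}_{\infty}$ is the identity map. The only genuine obstacle is the structural description in the second paragraph; everything else is bookkeeping, and that description may in any case simply be quoted from \cite{Gutik-Savchuk-2018}.
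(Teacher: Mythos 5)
Your proposal is correct and takes essentially the same route as the paper: there the corollary is likewise obtained as the special case $S=\mathbf{I}\mathbb{N}_{\infty}$ of Theorem~\ref{theorem-2.1-6}, with the hypotheses --- that $\mathbf{I}\mathbb{N}_{\infty}$ is a full inverse submonoid of $\mathscr{I}_{\infty}^{\!\nearrow}(\mathbb{N})$ containing $\mathscr{C}_{\mathbb{N}}$ --- treated as known facts (they are recorded after Definition~\ref{definition-2.2} and in \cite{Gutik-Savchuk-2018}) rather than verified. Your second paragraph, showing that every cofinite partial isometry of $\mathbb{N}$ is a restriction of a translation $n\mapsto n+c$, is precisely the structural fact the paper quotes instead of reproving, so your write-up is simply a more self-contained version of the same argument.
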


\begin{remark}\label{remark-2.1-9}
By Lemma~1.1 from \cite{Chuchman-Gutik-2010} the band of the monoid $\mathscr{I}_{\infty}^{\!\nearrow}(\mathbb{N})$ is isomorphic to the free semilattice $(\mathscr{P}_{<\omega}(\omega),\cup)$. Next we identify $\mathbb{N}$ with $\omega$. Then every bijective transformation of $\mathbb{N}$ extends to an automorphism of the free semilattice $(\mathscr{P}_{<\omega}(\omega),\cup)$. This implies that the monoid $\mathscr{I}_{\infty}^{\!\nearrow}(\mathbb{N})$ contains a full inverse subsemigroup which has $\mathfrak{c}$ distinct automorphisms.
\end{remark}

An example of a non-group congruence on the semigroup $\mathbf{I}\mathbb{N}_{\infty}$ is presented in \cite{Gutik-Savchuk-2018}. Later we shall establish what submonoids of $\mathscr{I}_{\infty}^{\,\Rsh\!\!\!\nearrow}(\mathbb{N})$ admit only a group non-identity congruence.

\medskip

For an arbitrary positive integer $n_0$ we denote $[n_0)=\left\{n\in\mathbb{N}\colon n\geqslant n_0\right\}$. Since the set of all positive integers is well ordered, the definition of the semigroup $\mathscr{I}_{\infty}^{\,\Rsh\!\!\!\nearrow}(\mathbb{N})$ implies that for every $\alpha\in\mathscr{I}_{\infty}^{\,\Rsh\!\!\!\nearrow}(\mathbb{N})$ there exists the smallest positive integer $n_{\alpha}^{\mathbf{d}}\in\operatorname{dom}\alpha$ such that the restriction $\alpha|_{\left[n_{\alpha}^{\mathbf{d}}\right)}$ of the partial map $\alpha\colon \mathbb{N}\rightharpoonup \mathbb{N}$ onto the set $\left[n_{\alpha}^{\mathbf{d}}\right)$ is an element of the semigroup $\mathscr{C}_{\mathbb{N}}$, i.e., $\alpha|_{\left[n_{\alpha}^{\mathbf{d}}\right)}$ is a some partial shift of $\left[n_{\alpha}^{\mathbf{d}}\right)$. For every $\alpha\in\mathscr{I}_{\infty}^{\,\Rsh\!\!\!\nearrow}(\mathbb{N})$ we put $\overrightarrow{\alpha}=\alpha|_{\left[n_{\alpha}^{\mathbf{d}}\right)}$, i.e.
\begin{equation*}
\operatorname{dom}\overrightarrow{\alpha}=\left[n_{\alpha}^{\mathbf{d}}\right), \qquad (x)\overrightarrow{\alpha}=(x)\alpha \quad \hbox{for all} \quad x\in \operatorname{dom}\overrightarrow{\alpha} \qquad \hbox{and} \qquad \operatorname{ran}\overrightarrow{\alpha}=\left(\operatorname{dom}\overrightarrow{\alpha}\right)\alpha.
\end{equation*}
Also, we put
\begin{equation*}
\underline{n}_{\alpha}^{\mathbf{d}}=\min\left\{j\in\mathbb{N}\colon j\in\operatorname{dom}\alpha\right\} \qquad \hbox{for} \quad \alpha\in\mathscr{I}_{\infty}^{\,\Rsh\!\!\!\nearrow}(\mathbb{N}),
\end{equation*}
and
\begin{equation*}
\overline{n}_{\alpha}^{\mathbf{d}}=\max\left\{j\in\operatorname{dom}\alpha\colon j<n_{\alpha}^{\mathbf{d}}\right\} \qquad \hbox{for} \quad \alpha\in\mathscr{I}_{\infty}^{\,\Rsh\!\!\!\nearrow}(\mathbb{N})\setminus\mathscr{C}_{\mathbb{N}}.
\end{equation*}
It is obvious that $\underline{n}_{\alpha}^{\mathbf{d}}\leqslant n_{\alpha}^{\mathbf{d}}$ when $\alpha\in\mathscr{I}_{\infty}^{\,\Rsh\!\!\!\nearrow}(\mathbb{N})$ and $\underline{n}_{\alpha}^{\mathbf{d}}\leqslant \overline{n}_{\alpha}^{\mathbf{d}}< n_{\alpha}^{\mathbf{d}}$ when $\alpha\in\mathscr{I}_{\infty}^{\,\Rsh\!\!\!\nearrow}(\mathbb{N})\setminus\mathscr{C}_{\mathbb{N}}$.

The following theorem is proved in \cite{Gutik-Savchuk-2018}.

\begin{theorem}[{\cite[Theorem~9]{Gutik-Savchuk-2018}}]\label{theorem-2.1}
Let $\mathfrak{C}$ be a congruence on the semigroup $\mathbf{I}\mathbb{N}_{\infty}$. Then the following conditions are equivalent:
\begin{enumerate}
  \item[$(1)$] $\mathfrak{C}$ is a group congruence;

  \item[$(2)$] there exists a subsemigroup $S$ of $\mathbf{I}\mathbb{N}_{\infty}$ which is isomorphic to the bicyclic semigroup and $S$ contains two distinct $\mathfrak{C}$-equivalent elements;

  \item[$(3)$] every subsemigroup of $\mathbf{I}\mathbb{N}_{\infty}$, which is isomorphic to the bicyclic semigroup, has two distinct $\mathfrak{C}$-equivalent elements.
\end{enumerate}
\end{theorem}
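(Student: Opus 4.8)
The plan is to reduce everything to the behaviour of $\mathfrak{C}$ on the band $E(\mathbf{I}\mathbb{N}_{\infty})$. First I would record the standard reformulation for an inverse monoid: $\mathfrak{C}$ is a group congruence if and only if $\mathbf{I}\mathbb{N}_{\infty}/\mathfrak{C}$ has a single idempotent, i.e. $\varepsilon\,\mathfrak{C}\,\mathbb{I}$ for every $\varepsilon\in E(\mathbf{I}\mathbb{N}_{\infty})$. Indeed, if all idempotents are $\mathfrak{C}$-equivalent then $\mathbf{I}\mathbb{N}_{\infty}/\mathfrak{C}$ is an inverse semigroup with one idempotent, hence a group; conversely a group has one idempotent. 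Since the band of $\mathbf{I}\mathbb{N}_{\infty}$ is a free semilattice with identity over $\mathbb{N}$ (each idempotent is $\varepsilon_{A}=\varepsilon(a_{1})\cdots\varepsilon(a_{k})$ for a finite hole-set $A=\{a_{1},\dots,a_{k}\}$), this collapses to one clean goal: $\mathfrak{C}$ is a group congruence \emph{iff} $\varepsilon(k)\,\mathfrak{C}\,\mathbb{I}$ for every $k\in\mathbb{N}$; equivalently, using $\mathbf{I}\mathbb{N}_{\infty}/\mathfrak{C}_{\mathbf{mg}}\cong\mathbb{Z}(+)$ and minimality of $\mathfrak{C}_{\mathbf{mg}}$, it is the condition $\mathfrak{C}_{\mathbf{mg}}\subseteq\mathfrak{C}$.

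With this in hand the two easy implications are immediate. For $(1)\Rightarrow(3)$: a group congruence identifies all idempotents, and any subsemigroup $S$ isomorphic to the bicyclic semigroup contains an infinite descending chain of idempotents $e_{0}>e_{1}>e_{2}>\cdots$, so any two distinct members of this chain are distinct $\mathfrak{C}$-equivalent elements of $S$. For $(3)\Rightarrow(2)$: by Remark~\ref{remark-1.1} the subsemigroup $\mathscr{C}_{\mathbb{N}}=\langle\alpha,\beta\rangle$ is a copy of the bicyclic semigroup inside $\mathbf{I}\mathbb{N}_{\infty}$, so the universally quantified statement $(3)$ applied to it yields $(2)$.

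The substance is $(2)\Rightarrow(1)$. Given a bicyclic $S$ with two distinct $\mathfrak{C}$-equivalent elements, the restriction $\mathfrak{C}|_{S}$ is a non-identity congruence on a copy of the bicyclic semigroup, hence by \cite[Corollary~1.32]{Clifford-Preston-1961-1967} it is a group congruence on $S$; thus \emph{all} idempotents of $S$ collapse, giving comparable idempotents $\varepsilon_{E}\,\mathfrak{C}\,\varepsilon_{F}$ of $\mathbf{I}\mathbb{N}_{\infty}$ with $F\subsetneq E$. I would then propagate this single identification across the whole band. The finishing mechanism is clean: once $\mathbb{I}\,\mathfrak{C}\,\varepsilon(k_{0})$ is known for one hole, conjugating this relation by the restricted translation $x\mapsto x-(k_{0}-1)$ of $[k_{0})$ (forming $u^{-1}(\cdot)\,u$, where $u^{-1}u=\mathbb{I}$) moves the hole to position $1$, giving $\mathbb{I}\,\mathfrak{C}\,\varepsilon(1)$; conjugating $\mathbb{I}\,\mathfrak{C}\,\varepsilon(1)$ by the pure shifts $x\mapsto x+t$ of $\mathbb{N}$ makes consecutive initial-segment idempotents $\mathfrak{C}$-equivalent, whence $\varepsilon_{\{1,\dots,n\}}\,\mathfrak{C}\,\mathbb{I}$ for all $n$; finally conjugating each relation $\mathbb{I}\,\mathfrak{C}\,\varepsilon_{\{1,\dots,k-1\}}$ by the isometry $x\mapsto x+1$ of $\mathbb{N}\setminus\{k\}$ (whose domain idempotent is $\varepsilon(k)$) produces $\varepsilon(k)\,\mathfrak{C}\,\mathbb{I}$ for every $k$, and the free-semilattice reduction of the first paragraph finishes.

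Hence the whole weight of $(2)\Rightarrow(1)$ sits in one reduction, which I expect to be the main obstacle: passing from an arbitrary identification $\varepsilon_{E}\,\mathfrak{C}\,\varepsilon_{F}$ of comparable idempotents to a \emph{single-hole} identification $\mathbb{I}\,\mathfrak{C}\,\varepsilon(k_{0})$. The difficulty is structural: $\mathbf{I}\mathbb{N}_{\infty}$ has no non-trivial units, so one cannot simply conjugate by a translation to relocate or delete a hole without the non-surjective restricted shifts introducing spurious front-holes; moreover when the generator of $S$ has shift of absolute value $>1$ the set $E\setminus F$ has several elements, and the $\mathfrak{C}$-class of $\mathbb{I}$ is only upward closed in the band (closed under adjoining holes), so there is no free way to discard holes. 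I would overcome this by conjugating the whole collapsing chain $\varepsilon_{E_{0}}\,\mathfrak{C}\,\varepsilon_{E_{1}}\,\mathfrak{C}\,\cdots$ of $S$ by carefully chosen cofinite restricted translations (shifting each domain down so that its least point maps to $1$) to thin and realign the hole-configurations until a single-hole identification is exposed, after which the clean finishing mechanism above applies.
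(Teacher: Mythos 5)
Two preliminary remarks. First, the paper contains no proof of this statement: Theorem~\ref{theorem-2.1} is imported verbatim from \cite[Theorem~9]{Gutik-Savchuk-2018}, so your attempt can only be judged on its own merits. Second, much of what you actually carry out is correct: the reduction of ``group congruence'' to ``every idempotent is $\mathfrak{C}$-equivalent to $\mathbb{I}$'', the implications $(1)\Rightarrow(3)$ and $(3)\Rightarrow(2)$, the use of \cite[Corollary~1.32]{Clifford-Preston-1961-1967} to collapse $E(S)$, and the finishing mechanism once a relation $\mathbb{I}\,\mathfrak{C}\,\varepsilon(1)$ is in hand. (Your very last conjugation actually yields $\varepsilon(k)\,\mathfrak{C}\,\varepsilon_{\{1,\dots,k-2,k\}}$ rather than $\varepsilon(k)\,\mathfrak{C}\,\mathbb{I}$, where $\varepsilon_{A}$ denotes the identity map of $\mathbb{N}\setminus A$; this is harmless, since from $\mathbb{I}\,\mathfrak{C}\,\varepsilon_{\{1,\dots,k\}}$ one gets $\varepsilon(k)=\varepsilon(k)\mathbb{I}\,\mathfrak{C}\,\varepsilon(k)\varepsilon_{\{1,\dots,k\}}=\varepsilon_{\{1,\dots,k\}}\,\mathfrak{C}\,\mathbb{I}$.)

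The genuine gap is exactly the step you flag, and the route you sketch for it cannot be made to work. You propose to extract from the collapse of $E(S)$ \emph{one} comparable pair $\varepsilon_{F}\,\mathfrak{C}\,\varepsilon_{E}$, $F\subsetneq E$, and ``propagate this single identification''. This is impossible in principle: a non-group congruence on $\mathbf{I}\mathbb{N}_{\infty}$ can identify comparable idempotents. Concretely, recall that every element of $\mathbf{I}\mathbb{N}_{\infty}$ is a restriction of a translation $x\mapsto x+d$; declare $\alpha\sim\beta$ iff $d_{\alpha}=d_{\beta}$ and $\max\left(\mathbb{N}\setminus\operatorname{dom}\alpha\right)=\max\left(\mathbb{N}\setminus\operatorname{dom}\beta\right)$ (with $\max\varnothing=0$). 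Since $\mathbb{N}\setminus\operatorname{dom}(\alpha\gamma)=\left(\mathbb{N}\setminus\operatorname{dom}\alpha\right)\cup\bigl(\left((\mathbb{N}\setminus\operatorname{dom}\gamma)-d_{\alpha}\right)\cap\mathbb{N}\bigr)$ and $d_{\alpha\gamma}=d_{\alpha}+d_{\gamma}$, both invariants of a product depend only on the invariants of the factors, so $\sim$ is a congruence; it identifies the distinct comparable idempotents $\varepsilon(3)$ and $\varepsilon_{\{1,3\}}$, yet it is not a group congruence because $\varepsilon(1)\not\sim\varepsilon(2)$. (This is precisely the phenomenon behind the paper's remark that $\mathbf{I}\mathbb{N}_{\infty}$ admits non-identity non-group congruences; one can check $\sim$ is the identity on every bicyclic subsemigroup, so it does not contradict the theorem.) Your fallback recipe -- conjugate the chain, ``shifting each domain down so that its least point maps to $1$'' -- also fails: for the standard copy $\mathscr{C}_{\mathbb{N}}$ the chain idempotents are $\varepsilon_{\{1,\dots,n\}}$, and that conjugation sends each of them to $\mathbb{I}$, so every conjugated relation degenerates to $\mathbb{I}\,\mathfrak{C}\,\mathbb{I}$. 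What is missing is a structural lemma about bicyclic copies, which is what actually drives the proof: if $S=\langle P,Q\rangle$ with $PQ$ the identity of $S$, then $Q=P^{-1}$ (uniqueness of inverses in $\mathbf{I}\mathbb{N}_{\infty}$), $P$ is a translation by some $d$ restricted to a cofinite set $D$ with $D+d\subsetneq D$, and necessarily $d\geqslant 1$ (for $d\leqslant 0$ the inclusion fails at $\min D$). Hence consecutive idempotents of $S$ have hole sets $E_{0}$ and $E_{1}=\{1,\dots,d\}\cup(E_{0}+d)\supseteq E_{0}$, so $\max E_{1}=\max E_{0}+d>\max E_{0}$: the holes grow \emph{at the top}, not merely in number. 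With this fact the reduction is two lines: conjugating $\varepsilon_{E_{0}}\,\mathfrak{C}\,\varepsilon_{E_{1}}$ by the total shift $x\mapsto x+\max E_{0}$ turns the left side into $\mathbb{I}$ and the right side into $\varepsilon_{G}$ with $\varnothing\neq G$, and one further conjugation by $x\mapsto x+\max G-1$ gives $\mathbb{I}\,\mathfrak{C}\,\varepsilon(1)$, after which your finishing mechanism completes the argument. Without this lemma (and with the single-pair reduction in its place), the central implication $(2)\Rightarrow(1)$ remains unproved.
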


The following lemma completes the statements of Theorem~\ref{theorem-2.1}.

\begin{lemma}\label{lemma-2.1-1}
Let $\mathfrak{C}$ be a congruence on the semigroup $\mathbf{I}\mathbb{N}_{\infty}$, $\varepsilon\in E(\mathscr{C}_{\mathbb{N}})$, $\iota\in E(\mathbf{I}\mathbb{N}_{\infty})\setminus E(\mathscr{C}_{\mathbb{N}})$ and $\iota\leqslant\varepsilon$. Then $\varepsilon\mathfrak{C}\iota$ implies that $\mathfrak{C}$ is a group congruence on $\mathbf{I}\mathbb{N}_{\infty}$.
\end{lemma}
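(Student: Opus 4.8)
The plan is to reduce everything to Theorem~\ref{theorem-2.1}: by the implication $(2)\Rightarrow(1)$ there, it suffices to produce two distinct $\mathfrak{C}$-equivalent elements inside a single subsemigroup of $\mathbf{I}\mathbb{N}_{\infty}$ isomorphic to the bicyclic semigroup, and the obvious candidate is $\mathscr{C}_{\mathbb{N}}$ itself. Recall that the idempotents of $\mathscr{C}_{\mathbb{N}}$ are exactly the partial identities $\beta^m\alpha^m$ of the final segments $[m+1)$. Thus the hypotheses say that $\varepsilon=\beta^{k-1}\alpha^{k-1}$ is the identity of some $[k)$ with $k\geqslant 1$, whereas $\operatorname{dom}\iota$ is cofinite but is \emph{not} a final segment; in particular $\iota\neq\mathbb{I}$ and $\iota\neq\varepsilon$.

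I would first fix the integer $n=\max(\mathbb{N}\setminus\operatorname{dom}\iota)$ and record two order facts. Since $\iota\neq\mathbb{I}$, the set $\mathbb{N}\setminus\operatorname{dom}\iota$ is a nonempty finite set, so $n\geqslant 1$ is well defined. Since $\iota\leqslant\varepsilon$ gives $\operatorname{dom}\iota\subseteq[k)$ while $\iota\neq\varepsilon$ forbids $\operatorname{dom}\iota=[k)$, the largest missing point must satisfy $n\geqslant k$. The central step is then to multiply the relation $\varepsilon\,\mathfrak{C}\,\iota$ on the right by the idempotent $\nu=\beta^{n-1}\alpha^{n-1}\in E(\mathscr{C}_{\mathbb{N}})$, the identity of $[n)$. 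Because the band of $\mathbf{I}\mathbb{N}_{\infty}$ is a semilattice, a product of two partial identities is the partial identity of the intersection of their domains. Hence $\varepsilon\nu$ is the identity of $[k)\cap[n)=[n)$, that is $\varepsilon\nu=\beta^{n-1}\alpha^{n-1}$ (this is where $n\geqslant k$ is used), while $\iota\nu$ is the identity of $\operatorname{dom}\iota\cap[n)$; since $n\notin\operatorname{dom}\iota$ but every integer exceeding $n$ lies in $\operatorname{dom}\iota$ (as $n$ is the \emph{largest} missing point), this intersection is exactly $[n+1)$, so $\iota\nu=\beta^n\alpha^n$.

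Applying $\mathfrak{C}$ to $\varepsilon\,\mathfrak{C}\,\iota$ therefore yields $\beta^{n-1}\alpha^{n-1}\,\mathfrak{C}\,\beta^n\alpha^n$, two distinct idempotents of $\mathscr{C}_{\mathbb{N}}$. Thus the bicyclic subsemigroup $\mathscr{C}_{\mathbb{N}}$ contains two distinct $\mathfrak{C}$-equivalent elements, and the implication $(2)\Rightarrow(1)$ of Theorem~\ref{theorem-2.1} finishes the proof. The only points that need care are the two inequalities $n\geqslant 1$ and $n\geqslant k$ and the two domain computations; I expect the main (and rather minor) obstacle to be precisely the verification that $\operatorname{dom}\iota\cap[n)=[n+1)$, which hinges on choosing $n$ to be the maximal gap rather than an arbitrary missing point. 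Once that choice is made the argument is pure bookkeeping with intersections of final segments, and it is worth noting that it never uses the detailed gap structure of $\operatorname{dom}\iota$ beyond $\iota\neq\mathbb{I}$ and $\iota\neq\varepsilon$.
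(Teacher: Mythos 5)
Your proof is correct and follows essentially the same route as the paper's: multiply the relation $\varepsilon\,\mathfrak{C}\,\iota$ by the partial identity of a final segment chosen around the largest gap of $\operatorname{dom}\iota$, obtain two distinct $\mathfrak{C}$-equivalent idempotents of $\mathscr{C}_{\mathbb{N}}$, and invoke Theorem~\ref{theorem-2.1}$(2)\Rightarrow(1)$. In fact your bookkeeping with $n=\max(\mathbb{N}\setminus\operatorname{dom}\iota)$ is more careful than the paper's own write-up, which suffers from off-by-one slips in the indices of the auxiliary idempotents.
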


\begin{proof}
The assumptions of the lemma imply that $n_{\iota}^{\mathbf{d}}<n_{\varepsilon}^{\mathbf{d}}$. Put $\varepsilon_{n_{\iota}^{\mathbf{d}}+1}\colon \mathbb{N}\rightharpoonup\mathbb{N}$ and $\varepsilon_{n_{\iota}^{\mathbf{d}}}\colon \mathbb{N}\rightharpoonup\mathbb{N}$ are identity maps of the sets $\left[n_{\iota}^{\mathbf{d}}+1\right)$ and $\left[n_{\iota}^{\mathbf{d}}\right)$, respectively. It is obvious that $\varepsilon_{n_{\iota}^{\mathbf{d}}+1}, \varepsilon_{n_{\iota}^{\mathbf{d}}}\in E(\mathscr{C}_{\mathbb{N}})$,
\begin{equation*}
  \varepsilon_{n_{\iota}^{\mathbf{d}}}=\varepsilon_{n_{\iota}^{\mathbf{d}}}\cdot \varepsilon_{n_{\iota}^{\mathbf{d}}+1}= \varepsilon_{n_{\iota}^{\mathbf{d}}}\cdot \iota= \varepsilon_{n_{\iota}^{\mathbf{d}}+1}\cdot\iota \qquad \hbox{and} \qquad \varepsilon_{n_{\iota}^{\mathbf{d}}+1}=\varepsilon_{n_{\iota}^{\mathbf{d}}+1}\cdot\varepsilon,
\end{equation*}
and hence $\varepsilon_{n_{\iota}^{\mathbf{d}}+1}\mathfrak{C}\varepsilon_{n_{\iota}^{\mathbf{d}}}$. Then Theorem~\ref{theorem-2.1} and Corollary~1.32~\cite{Clifford-Preston-1961-1967} imply that $\mathfrak{C}$ is a group congruence on $\mathbf{I}\mathbb{N}_{\infty}$.
\end{proof}

\begin{definition}\label{definition-2.2}
Put
\begin{equation*}
\mathbf{I}\mathbb{N}_{\infty}^{[\underline{1}]}=\left\{\alpha\in\mathscr{I}_{\infty}^{\!\nearrow}(\mathbb{N})\colon \hbox{the restriction~} \alpha|_{\operatorname{dom}\alpha\setminus\{\underline{n}_{\alpha}^{\mathbf{d}}\}} \hbox{~is a partial isometry of~} \mathbb{N}\right\}.
\end{equation*}
\end{definition}

It is obvious that $\mathbf{I}\mathbb{N}_{\infty}^{[\underline{1}]}$ is an inverse submonoid of the inverse monoid $\mathscr{I}_{\infty}^{\!\nearrow}(\mathbb{N})$, $\mathbf{I}\mathbb{N}_{\infty}$ is an inverse submonoid of  $\mathbf{I}\mathbb{N}_{\infty}^{[\underline{1}]}$  and $E(\mathbf{I}\mathbb{N}_{\infty})=E(\mathbf{I}\mathbb{N}_{\infty}^{[\underline{1}]})= E(\mathscr{I}_{\infty}^{\!\nearrow}(\mathbb{N}))=E(\mathscr{I}_{\infty}^{\,\Rsh\!\!\!\nearrow}(\mathbb{N}))$.

\begin{lemma}\label{lemma-2.3}
Let $S$ be an inverse subsemigroup of $\mathscr{I}_{\infty}^{\,\Rsh\!\!\!\nearrow}(\mathbb{N})$ such that $S$ contains $\mathbf{I}\mathbb{N}_{\infty}^{[\underline{1}]}$ as a submonoid. Let $\mathfrak{C}$ be a congruence on $S$ such that two distinct idempotents $\varepsilon$ and $\iota$ of $\mathbf{I}\mathbb{N}_{\infty}^{[\underline{1}]}$ are $\mathfrak{C}$-equivalent. Then $\mathfrak{C}$ is a group congruence on $S$.
\end{lemma}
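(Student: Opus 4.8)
The plan is to show that $\mathfrak{C}$ collapses the whole band $E(S)$ into a single class; since $S/\mathfrak{C}$ is then an inverse semigroup with exactly one idempotent, it is a group and $\mathfrak{C}$ is a group congruence. Because $E(S)=E(\mathbf{I}\mathbb{N}_{\infty}^{[\underline{1}]})=E(\mathbf{I}\mathbb{N}_{\infty})$ and $\mathbf{I}\mathbb{N}_{\infty}$ is a subsemigroup of $S$, it is enough to prove that the restriction $\mathfrak{C}|_{\mathbf{I}\mathbb{N}_{\infty}}$ is a group congruence on $\mathbf{I}\mathbb{N}_{\infty}$, and for this, by Theorem~\ref{theorem-2.1}, it suffices to exhibit two distinct $\mathfrak{C}$-equivalent elements inside one bicyclic subsemigroup. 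I will produce such a pair inside $\mathscr{C}_{\mathbb{N}}$ itself, namely $\mathbb{I}\,\mathfrak{C}\,\varepsilon(1)$, where $\varepsilon(1)=\beta\alpha$. Throughout, for a finite $A\subseteq\mathbb{N}$ I write $\varepsilon_A$ for the identity map of $\mathbb{N}\setminus A$, so that $E(S)=\{\varepsilon_A\}$ and $\varepsilon_A\varepsilon_B=\varepsilon_{A\cup B}$.

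First I reduce the hypothesis to a one-point situation. The two given idempotents are $\varepsilon_X\neq\varepsilon_Y$. Multiplying $\varepsilon_X\,\mathfrak{C}\,\varepsilon_Y$ by $\varepsilon_X$ and by $\varepsilon_Y$ gives $\varepsilon_X\,\mathfrak{C}\,\varepsilon_{X\cup Y}$ and $\varepsilon_Y\,\mathfrak{C}\,\varepsilon_{X\cup Y}$; choosing the inclusion that is proper, I may assume $\varepsilon_X\,\mathfrak{C}\,\varepsilon_W$ with $X\subsetneq W=X\cup Y$. Picking $p\in W\setminus X$ and multiplying by $\varepsilon_{W\setminus\{p\}}$ (which absorbs $X$) yields $\varepsilon_Z\,\mathfrak{C}\,\varepsilon_{Z\cup\{p\}}$ with $Z=W\setminus\{p\}$ and $p\notin Z$. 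Thus the two idempotents differ in exactly one ``hole'', at $p$.

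Next I normalise the position of the distinguished hole. Conjugation by the shifts gives $\alpha^{p-1}\varepsilon_A\beta^{p-1}=\varepsilon_{\{a-(p-1)\,:\,a\in A,\ a\geqslant p\}}$, i.e. it deletes every hole lying below $p$ and moves the rest down by $p-1$; applied to both sides this turns the relation into $\varepsilon_H\,\mathfrak{C}\,\varepsilon_{\{1\}\cup H}$, where now the distinguished hole sits at $1$ and $H\subseteq\{2,3,\dots\}$ collects the holes of $Z$ that lay above $p$. If $H=\emptyset$ this is already the desired $\mathbb{I}\,\mathfrak{C}\,\varepsilon(1)$. Otherwise the genuinely new ingredient, and the crux of the argument, is to erase $H$: this cannot be achieved with partial isometries or with elements of $\mathscr{C}_{\mathbb{N}}$, since conjugating by a partial isometry moves holes rigidly and can only create new holes from the complement of the range, never delete a prescribed interior one, so here the extra room in $\mathbf{I}\mathbb{N}_{\infty}^{[\underline{1}]}$ is indispensable. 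Put $k=\max H$ and let $s$ be the monotone injection with $\operatorname{dom}s=\mathbb{N}$, $(1)s=1$ and $(x)s=x+k$ for $x\geqslant 2$; deleting the least point of its domain leaves the shift by $k$, so $s\in\mathbf{I}\mathbb{N}_{\infty}^{[\underline{1}]}\subseteq S$, while $\operatorname{ran}s=\{1\}\cup\{k+2,k+3,\dots\}$ misses $\{2,\dots,k+1\}$. A direct computation gives $s\varepsilon_A s^{-1}=\varepsilon_{(\{1\}\cap A)\cup\{a-k\,:\,a\in A,\ a\geqslant k+2\}}$, so this single conjugation fixes a hole at $1$, deletes every hole in $\{2,\dots,k+1\}$, and pushes higher holes down. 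Since $H\subseteq\{2,\dots,k+1\}$ and $1\notin H$, it sends $\varepsilon_H\mapsto\mathbb{I}$ and $\varepsilon_{\{1\}\cup H}\mapsto\varepsilon(1)$, whence $\mathbb{I}\,\mathfrak{C}\,\varepsilon(1)$.

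Finally, $\mathbb{I}=\alpha\beta$ and $\varepsilon(1)=\beta\alpha$ are two distinct elements of the bicyclic subsemigroup $\mathscr{C}_{\mathbb{N}}\subseteq\mathbf{I}\mathbb{N}_{\infty}$, so Theorem~\ref{theorem-2.1} shows that $\mathfrak{C}|_{\mathbf{I}\mathbb{N}_{\infty}}$ is a group congruence on $\mathbf{I}\mathbb{N}_{\infty}$; hence all idempotents of $\mathbf{I}\mathbb{N}_{\infty}$, and therefore all of $E(S)=E(\mathbf{I}\mathbb{N}_{\infty})$, are $\mathfrak{C}$-equivalent. As noted at the outset, this forces $S/\mathfrak{C}$ to be a group, so $\mathfrak{C}$ is a group congruence on $S$. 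The main obstacle is precisely the third step, the removal of the upper holes $H$, which is exactly where the assumption $\mathbf{I}\mathbb{N}_{\infty}^{[\underline{1}]}\subseteq S$ (rather than merely $\mathbf{I}\mathbb{N}_{\infty}\subseteq S$ or $\mathscr{C}_{\mathbb{N}}\subseteq S$) is used.
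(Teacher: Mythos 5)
Your proof is correct and follows essentially the same route as the paper: both arguments manufacture, by multiplying with idempotents and conjugating by explicitly constructed non-isometric elements of $\mathbf{I}\mathbb{N}_{\infty}^{[\underline{1}]}$, a pair of distinct $\mathfrak{C}$-equivalent idempotents of $\mathscr{C}_{\mathbb{N}}$, and then invoke Theorem~\ref{theorem-2.1} together with $E(S)=E(\mathbf{I}\mathbb{N}_{\infty})$ to collapse all idempotents and conclude that $S/\mathfrak{C}$ is a group. The only difference is organizational: where the paper splits into the cases $n_{\varepsilon}^{\mathbf{d}}=n_{\iota}^{\mathbf{d}}$ and $n_{\varepsilon}^{\mathbf{d}}\neq n_{\iota}^{\mathbf{d}}$ and conjugates by chains of one-point shifts $\alpha_j$, you normalize to a one-hole difference and erase the remaining holes with the single map $s$, arriving at the canonical pair $\mathbb{I}\,\mathfrak{C}\,\varepsilon(1)$.
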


\begin{proof}
If $\varepsilon$ and $\iota$ are idempotents of the subsemigroup $\mathscr{C}_{\mathbb{N}}$ of $\mathscr{I}_{\infty}^{\,\Rsh\!\!\!\nearrow}(\mathbb{N})$ then the statement of our lemma follows from Theorem~\ref{theorem-2.1}. Hence we assume that at least one of idempotents $\varepsilon$ and $\iota$ does not belong to $\mathscr{C}_{\mathbb{N}}$.

We consider two cases:
\begin{equation*}
  1)~n_{\varepsilon}^{\mathbf{d}}=n_{\iota}^{\mathbf{d}}; \qquad \hbox{and} \qquad 2)~n_{\varepsilon}^{\mathbf{d}}\neq n_{\iota}^{\mathbf{d}}.
\end{equation*}

Suppose case $n_{\varepsilon}^{\mathbf{d}}=n_{\iota}^{\mathbf{d}}$ holds. Since $\varepsilon\neq\iota$ without loss of generality we may assume that there exists a positive integer $n_0<n_{\varepsilon}^{\mathbf{d}}$ such that $n_0\in\operatorname{dom}\varepsilon\setminus\operatorname{dom}\iota$. Then $n_0=n_{\varepsilon}^{\mathbf{d}}-(k+1)$ for some positive integer $k$.

For every positive integer $j<n_{\varepsilon}^{\mathbf{d}}-1$ we define a partial bijection $\alpha_j\colon \mathbb{N}\rightharpoonup\mathbb{N}$ in the following way:
\begin{equation*}
  \operatorname{dom}\alpha_j=\left\{j\right\}\cup\left\{n\in\mathbb{N}\colon n\geqslant n_{\varepsilon}^{\mathbf{d}}\right\}, \qquad \operatorname{ran}\alpha_j=\left\{j+1\right\}\cup\left\{n\in\mathbb{N}\colon n\geqslant n_{\varepsilon}^{\mathbf{d}}\right\}
\end{equation*}
and
\begin{equation*}
  (n)\alpha_j=
  \left\{
    \begin{array}{cl}
      n,   & \hbox{if~} n\geqslant n_{\varepsilon}^{\mathbf{d}};\\
      n+1, & \hbox{if~} n=j.
    \end{array}
  \right.
\end{equation*}
Simple verifications show that
\begin{equation*}
  \varepsilon_{n_{\varepsilon}^{\mathbf{d}}-1}= \alpha_{n_{\varepsilon}^{\mathbf{d}}-2}^{-1}\cdots\alpha_{n_0+1}^{-1}\alpha_{n_0}^{-1}\varepsilon\alpha_{n_0}\alpha_{n_0+1}\cdots\alpha_{n_{\varepsilon}^{\mathbf{d}}-2}
\end{equation*}
and
\begin{equation*}
  \varepsilon_{n_{\varepsilon}^{\mathbf{d}}}= \alpha_{n_0}^{-1}\iota\alpha_{n_0}= \alpha_{n_0+1}^{-1}\alpha_{n_0}^{-1}\iota\alpha_{n_0}\alpha_{n_0+1}=\ldots= \alpha_{n_{\varepsilon}^{\mathbf{d}}-2}^{-1}\cdots\alpha_{n_0+1}^{-1}\alpha_{n_0}^{-1}\iota\alpha_{n_0}\alpha_{n_0+1}\cdots\alpha_{n_{\varepsilon}^{\mathbf{d}}-2}
\end{equation*}
are identity maps of the sets $\left\{n\in\mathbb{N}\colon n\geqslant n_{\varepsilon}^{\mathbf{d}}-1\right\}$ and $\left\{n\in\mathbb{N}\colon n\geqslant n_{\varepsilon}^{\mathbf{d}}\right\}$, respectively,  and hence $\varepsilon_{n_{\varepsilon}^{\mathbf{d}}-1}$ and $\varepsilon_{n_{\varepsilon}^{\mathbf{d}}}$ are distinct $\mathfrak{C}$-equivalent idempotents of the subsemigroup $\mathscr{C}_{\mathbb{N}}$ in $\mathscr{I}_{\infty}^{\,\Rsh\!\!\!\nearrow}(\mathbb{N})$. By Theorem~\ref{theorem-2.1} all idempotents of the sebsemigroup $\mathbf{I}\mathbb{N}_{\infty}$ are $\mathfrak{C}$-equivalent, and hence $\mathfrak{C}$ is a group congruence on the semigroup $S$, because $E(\mathbf{I}\mathbb{N}_{\infty})= E(S)=E(\mathscr{I}_{\infty}^{\,\Rsh\!\!\!\nearrow}(\mathbb{N}))$.

Suppose case $n_{\varepsilon}^{\mathbf{d}}\neq n_{\iota}^{\mathbf{d}}$ holds. Without loss of generality we may assume that $n_{\varepsilon}^{\mathbf{d}}> n_{\iota}^{\mathbf{d}}$. Put $\varepsilon_{n_{\iota}^{\mathbf{d}}-1}\colon \mathbb{N}\rightharpoonup\mathbb{N}$ is the identity map of the set $\left\{n\in\mathbb{N}\colon n\geqslant n_{\varepsilon}^{\mathbf{d}}-1\right\}$. Simple verifications show that $\varepsilon_{n_{\iota}^{\mathbf{d}}-1}=\varepsilon_{n_{\iota}^{\mathbf{d}}-1}\varepsilon$ and $\overrightarrow{\iota}=\varepsilon_{n_{\iota}^{\mathbf{d}}-1}\iota$ are distinct $\mathfrak{C}$-equivalent idempotents of the subsemigroup $\mathscr{C}_{\mathbb{N}}$ in $\mathscr{I}_{\infty}^{\,\Rsh\!\!\!\nearrow}(\mathbb{N})$. By Theorem~\ref{theorem-2.1} all idempotents of the sebsemigroup $\mathbf{I}\mathbb{N}_{\infty}$ are $\mathfrak{C}$-equivalent, and hence $\mathfrak{C}$ is a group congruence on the semigroup $S$, because $E(\mathbf{I}\mathbb{N}_{\infty})= E(S)=E(\mathscr{I}_{\infty}^{\,\Rsh\!\!\!\nearrow}(\mathbb{N}))$.
\end{proof}

\begin{theorem}\label{theorem-2.4}
Let $S$ be an inverse subsemigroup of $\mathscr{I}_{\infty}^{\,\Rsh\!\!\!\nearrow}(\mathbb{N})$ such that $S$ contains $\mathbf{I}\mathbb{N}_{\infty}^{[\underline{1}]}$ as a submonoid. Then every non-identity congruence $\mathfrak{C}$ on $S$ is a group congruence.
\end{theorem}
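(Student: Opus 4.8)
The plan is to reduce every situation to Lemma~\ref{lemma-2.3}: starting from the mere non-triviality of $\mathfrak{C}$, I would manufacture two \emph{distinct} $\mathfrak{C}$-equivalent idempotents. Since $E(S)=E(\mathbf{I}\mathbb{N}_{\infty}^{[\underline{1}]})$, any two such idempotents automatically lie in $\mathbf{I}\mathbb{N}_{\infty}^{[\underline{1}]}$, and Lemma~\ref{lemma-2.3} then yields at once that $\mathfrak{C}$ is a group congruence. Concretely, as $\mathfrak{C}$ is not the identity congruence I would fix $a,b\in S$ with $a\neq b$ and $a\mathfrak{C}b$. Because $\mathfrak{C}$ is a congruence on an inverse semigroup it is compatible with inversion, so $a^{-1}\mathfrak{C}b^{-1}$, and multiplying gives $aa^{-1}\mathfrak{C}bb^{-1}$ together with $a^{-1}a\mathfrak{C}b^{-1}b$.

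If $aa^{-1}\neq bb^{-1}$ or $a^{-1}a\neq b^{-1}b$, then I already have two distinct $\mathfrak{C}$-equivalent idempotents of $S$, and Lemma~\ref{lemma-2.3} finishes the argument. Hence the only case demanding work is $aa^{-1}=bb^{-1}$ and $a^{-1}a=b^{-1}b$, that is, $a\mathscr{H}b$ in $\mathscr{I}_{\infty}^{\,\Rsh\!\!\!\nearrow}(\mathbb{N})$, equivalently $\operatorname{dom}a=\operatorname{dom}b$ and $\operatorname{ran}a=\operatorname{ran}b=:R$. Here I would set $f:=a^{-1}a=b^{-1}b$, the identity map of $R$, and $g:=b^{-1}a$. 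A direct computation gives $gg^{-1}=g^{-1}g=f$, so $g$ lies in the group $\mathscr{H}$-class of $f$ and is a bijection of $R$; moreover $g\mathfrak{C}f$, and $g\neq f$, since $g=f$ would force $a=bb^{-1}a=bg=bf=b$.

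The crux, and the step I expect to be the main obstacle, is to see that $g$ is not merely an abstract bijection of $R$ but a \emph{finitely supported} permutation of it. This is exactly where the almost-monotone, co-finite structure enters: since $a$ and $b$ have the same domain and range, their eventual shift on a common tail coincides (the shift amount is fixed by the defect $|\mathbb{N}\setminus\operatorname{dom}a|-|\mathbb{N}\setminus\operatorname{ran}a|$), so $(n)a=(n)b$ for all sufficiently large $n$ and hence $g=b^{-1}a$ is the identity off a finite subset of $R$. Granting this, from $g\neq f$ I may choose $j\in R$ with $(j)g=k\neq j$. Taking $\varepsilon$ to be the identity map of $R\setminus\{j\}$ (an idempotent of $S$, as $R\setminus\{j\}$ is cofinite), conjugation by $g$ sends $\varepsilon$ to the identity map $\varepsilon'$ of $R\setminus\{k\}$, and from $g\mathfrak{C}f$ with $f\varepsilon f=\varepsilon$ I get $\varepsilon'=g^{-1}\varepsilon g\,\mathfrak{C}\,f\varepsilon f=\varepsilon$, two distinct $\mathfrak{C}$-equivalent idempotents. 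Lemma~\ref{lemma-2.3} then closes this case as well, so $\mathfrak{C}$ is a group congruence in all cases.

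Conceptually, this final case amounts to the assertion that $S$ is fundamental: the free-semilattice band $E(S)$ detects every non-identity finitely supported permutation sitting in a group $\mathscr{H}$-class, so no non-identity congruence on $S$ can separate all idempotents. Everything outside the verification that $b^{-1}a$ is finitely supported is a routine bookkeeping with Green's relations and a single appeal to Lemma~\ref{lemma-2.3}.
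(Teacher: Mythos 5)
Your proof is correct and follows essentially the same route as the paper's: the same case split on whether $a\mathscr{H}b$, with each case reduced to Lemma~\ref{lemma-2.3} by manufacturing two distinct $\mathfrak{C}$-equivalent idempotents (via conjugation in the $\mathscr{H}$-case, the paper conjugating a test idempotent by $\alpha$ and $\beta$ separately where you conjugate by $g=b^{-1}a$). One remark: the step you single out as the crux --- that $g$ is a \emph{finitely supported} permutation of $R$ --- is true but superfluous, since $g\neq f$ together with $\operatorname{dom}g=\operatorname{dom}f=R$ already gives some $j\in R$ with $(j)g=k\neq j$, and then $\varepsilon$ (the identity of the cofinite set $R\setminus\{j\}$, hence in $E(S)$) and $g^{-1}\varepsilon g$ (the identity of $R\setminus\{k\}$) are distinct $\mathfrak{C}$-equivalent idempotents with no support considerations needed.
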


\begin{proof}
Let $\alpha$ and $\beta$ be two distinct $\mathfrak{C}$-equivalent elements of the semigroup $S$.

We consider two cases:
\begin{itemize}
  \item[$(i)$] $\alpha\mathscr{H}\beta$ in $S$;
  \item[$(ii)$] $\alpha$ and $\beta$ belong to distinct two $\mathscr{H}$-classes in $S$.
\end{itemize}

Suppose that $\alpha\mathscr{H}\beta$ in $S$. Then Proposition~1.1$(ix)$ of \cite{Chuchman-Gutik-2010} and Proposition~3.2.11 of \cite{Lawson-1998} imply that $\operatorname{dom}\alpha=\operatorname{dom}\beta$ and $\operatorname{ran}\alpha=\operatorname{ran}\beta$, and hence there exists a positive integer $n_0\in \operatorname{dom}\alpha$ such that $(n_0)\alpha\neq (n_0)\beta$. Let $\varepsilon_{n_0}\colon\mathbb{N}\rightharpoonup\mathbb{N}$ be the identity map of the set $\left\{n_0\right\}\cup\left\{n\in\mathbb{N}\colon n\geqslant m_0\right\}$, where $m_0\in\operatorname{dom}\alpha$ is an arbitrary positive integer such that $m_0\geqslant n_0+n_{\alpha}^{\mathbf{d}}$. By Proposition~3$(i)$ of \cite{Gutik-Savchuk-2018} and Proposition~3$(i)$ of \cite{Chuchman-Gutik-2010}, $E(\mathbf{I}\mathbb{N}_{\infty})=E(\mathscr{I}_{\infty}^{\,\Rsh\!\!\!\nearrow}(\mathbb{N}))$ and hence $\varepsilon_{n_0}\in E(S)$. Since $S$ is an inverse semigroup Proposition~2.3.4 from \cite{Lawson-1998} and $\alpha\mathfrak{C}\beta$ imply that $\alpha^{-1}\mathfrak{C}\beta^{-1}$, and hence we have that $(\alpha^{-1}\varepsilon_{n_0}\alpha)\mathfrak{C}(\beta^{-1}\varepsilon_{n_0}\beta)$. Then the definition of $\varepsilon_{n_0}$ implies that $\alpha^{-1}\varepsilon_{n_0}\alpha$ and $\beta^{-1}\varepsilon_{n_0}\beta$ are distinct idempotents of the semigroup $S$, and hence by Lemma~\ref{lemma-2.3}, $\mathfrak{C}$ is a group congruence on $S$.

If case $(ii)$ holds then at least one of the following conditions holds
\begin{equation*}
  \alpha\alpha^{-1}\neq\beta\beta^{-1} \qquad \hbox{or} \qquad \alpha^{-1}\alpha\neq\beta^{-1}\beta.
\end{equation*}
Then by Proposition~2.3.4 of \cite{Lawson-1998} the semigroup $S$ has two distinct $\mathfrak{C}$-equivalent idempotents. Next we apply Lemma~\ref{lemma-2.3}.
\end{proof}

Every inverse semigroup $S$ admits the \emph{least group congruence} $\mathfrak{C}_{\mathbf{mg}}$ (see \cite[Section III]{Petrich-1984}):
\begin{equation*}
s\mathfrak{C}_{\mathbf{mg}}t \qquad \hbox{if and only if there exists an idempotent} \quad e\in S \quad \hbox{such that} \quad se=te.
\end{equation*}

Later we shall describe the least group congruence on any inverse subsemigroup $S$ of $\mathscr{I}_{\infty}^{\,\Rsh\!\!\!\nearrow}(\mathbb{N})$ such that $S$ contains $\mathscr{C}_{\mathbb{N}}$ as a submonoid.

\smallskip

Definitions of inverse semigroups $\mathscr{C}_{\mathbb{N}}$, $\mathscr{I}_{\infty}^{\,\Rsh\!\!\!\nearrow}(\mathbb{N})$ and the congruence $\mathfrak{C}_{\mathbf{mg}}$ imply the following lemma.

\begin{lemma}\label{lemma-2.5}
Let $S$ be an inverse subsemigroup of $\mathscr{I}_{\infty}^{\,\Rsh\!\!\!\nearrow}(\mathbb{N})$ such that $S$ contains $\mathscr{C}_{\mathbb{N}}$ as a submonoid. Then the following conditions hold:
\begin{enumerate}
  \item[$(i)$] $\alpha\mathfrak{C}_{\mathbf{mg}}\overrightarrow{\alpha}$ for every $\alpha\in S$;
  \item[$(ii)$] if $\alpha$ and $\beta$ are elements of $S$ such that $\alpha=\overrightarrow{\alpha}$ and $\beta=\overrightarrow{\beta}$, then $\alpha\mathfrak{C}_{\mathbf{mg}}\beta$ if and only if $(n)\alpha=(n)\beta$ for all $n\in\operatorname{dom}\alpha\cap\operatorname{dom}\beta$.
\end{enumerate}
\end{lemma}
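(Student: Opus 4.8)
The plan is to use throughout the given characterisation $s\,\mathfrak{C}_{\mathbf{mg}}\,t$ if and only if $se=te$ for some $e\in E(S)$, keeping in mind two structural facts. First, every idempotent of $S$ is the identity map of a cofinite subset of $\mathbb{N}$, and since $\mathscr{C}_{\mathbb{N}}\subseteq S$, the semigroup $S$ contains the identity map of every final segment $[m)$. Second, by the very definition of $\overrightarrow{\gamma}$, for each $\gamma\in S$ the restriction $\overrightarrow{\gamma}$ is a partial shift: there is an integer $k_{\gamma}$ with $(x)\overrightarrow{\gamma}=x+k_{\gamma}$ for every $x\in\operatorname{dom}\overrightarrow{\gamma}=[n_{\gamma}^{\mathbf{d}})$, so that $\operatorname{ran}\overrightarrow{\gamma}=[n_{\gamma}^{\mathbf{d}}+k_{\gamma})$ is again a final segment.

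For $(i)$ I would set $m=n_{\alpha}^{\mathbf{d}}+k_{\alpha}$ and let $e$ be the identity map of the final segment $[m)=\operatorname{ran}\overrightarrow{\alpha}$, which lies in $E(S)$. The only non-formal point is the following consequence of injectivity: for every $x\in\operatorname{dom}\alpha$ with $x<n_{\alpha}^{\mathbf{d}}$ one has $(x)\alpha<m$. Indeed, the tail $[n_{\alpha}^{\mathbf{d}})$ is mapped by $\alpha$ bijectively onto all of $[m)$, so injectivity forces $(x)\alpha\notin[m)$, i.e.\ $(x)\alpha<m$. Consequently right multiplication by $e$ deletes precisely the "head" of $\alpha$ below $n_{\alpha}^{\mathbf{d}}$: one checks that $\operatorname{dom}(\alpha e)=[n_{\alpha}^{\mathbf{d}})$ and $\alpha e=\overrightarrow{\alpha}$, while $\overrightarrow{\alpha}e=\overrightarrow{\alpha}$ because $\operatorname{ran}\overrightarrow{\alpha}=[m)=\operatorname{dom}e$. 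Hence $\alpha e=\overrightarrow{\alpha}=\overrightarrow{\alpha}e$ and $\alpha\,\mathfrak{C}_{\mathbf{mg}}\,\overrightarrow{\alpha}$.

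For $(ii)$, observe that $\operatorname{dom}\alpha\cap\operatorname{dom}\beta=[\max\{n_{\alpha}^{\mathbf{d}},n_{\beta}^{\mathbf{d}}\})$ is a nonempty final segment on which $\alpha$ and $\beta$ act as the shifts $x\mapsto x+k_{\alpha}$ and $x\mapsto x+k_{\beta}$, so the stated condition $(n)\alpha=(n)\beta$ on the intersection is equivalent to $k_{\alpha}=k_{\beta}$; it therefore suffices to prove $\alpha\,\mathfrak{C}_{\mathbf{mg}}\,\beta\Leftrightarrow k_{\alpha}=k_{\beta}$. For the implication $\Leftarrow$, assuming $k_{\alpha}=k_{\beta}=k$ I would pick $q$ with $q-k\geqslant\max\{n_{\alpha}^{\mathbf{d}},n_{\beta}^{\mathbf{d}}\}$ and take $e$ to be the identity of $[q)$; then both $\alpha e$ and $\beta e$ equal the shift $x\mapsto x+k$ on $[q-k)$, whence $\alpha\,\mathfrak{C}_{\mathbf{mg}}\,\beta$. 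For $\Rightarrow$, suppose $\alpha e=\beta e$ where $e$ is the identity of a cofinite set $\mathbb{N}\setminus A$ with $A$ finite; for all sufficiently large $x$ (exceeding $\max\{n_{\alpha}^{\mathbf{d}},n_{\beta}^{\mathbf{d}}\}$ and with $x+k_{\alpha},x+k_{\beta}\notin A$) both products are defined at $x$ and send it to $x+k_{\alpha}$ and $x+k_{\beta}$ respectively, so equality of $\alpha e$ and $\beta e$ forces $k_{\alpha}=k_{\beta}$.

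I expect the argument to be essentially bookkeeping with domains and ranges; the only step requiring a genuine idea is the injectivity observation in $(i)$, which guarantees that the non-shift head of $\alpha$ lands entirely below $m$. This is exactly what allows a single final-segment idempotent to truncate $\alpha$ to $\overrightarrow{\alpha}$, and it is the crux on which the rest of the proof rests.
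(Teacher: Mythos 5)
Your proof is correct, and it matches the paper's approach: the paper gives no written proof, stating only that the lemma follows from the definitions of $\mathscr{C}_{\mathbb{N}}$, $\mathscr{I}_{\infty}^{\,\Rsh\!\!\!\nearrow}(\mathbb{N})$ and $\mathfrak{C}_{\mathbf{mg}}$, and your argument is exactly that definitional check carried out in detail, with final-segment idempotents from $\mathscr{C}_{\mathbb{N}}$ witnessing the congruence and the injectivity observation justifying that right multiplication by the identity of $\operatorname{ran}\overrightarrow{\alpha}$ truncates $\alpha$ to $\overrightarrow{\alpha}$.
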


\begin{theorem}\label{theorem-2.6}
Let $S$ be an inverse subsemigroup of $\mathscr{I}_{\infty}^{\,\Rsh\!\!\!\nearrow}(\mathbb{N})$ such that $S$ contains $\mathscr{C}_{\mathbb{N}}$ as a submonoid. Then the quotient semigroup $S/\mathfrak{C}_{\mathbf{mg}}$ is isomorphic to the additive group of integers $\mathbb{Z}(+)$.
\end{theorem}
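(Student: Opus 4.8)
The plan is to construct an explicit surjective homomorphism $\Theta\colon S\to\mathbb{Z}(+)$ whose kernel congruence coincides with $\mathfrak{C}_{\mathbf{mg}}$, and then to invoke the first isomorphism theorem for semigroups. First I would observe that, for every $\gamma\in S$, the partial map $\overrightarrow{\gamma}$ is by construction a partial shift of the final segment $\left[n_{\gamma}^{\mathbf{d}}\right)$; hence the difference $(x)\overrightarrow{\gamma}-x$ takes one and the same value $k_{\gamma}\in\mathbb{Z}$ for every $x\in\operatorname{dom}\overrightarrow{\gamma}$. Setting $(\gamma)\Theta=k_{\gamma}$ then yields a well-defined map $\Theta\colon S\to\mathbb{Z}$.

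Next I would verify that $\Theta$ is a homomorphism. Given $\gamma,\delta\in S$, for all sufficiently large $x$ one has $x\in\operatorname{dom}\overrightarrow{\gamma}$, so $(x)\gamma=x+(\gamma)\Theta$; since $x+(\gamma)\Theta\to\infty$, for large $x$ this value lies in $\operatorname{dom}\overrightarrow{\delta}$, whence $(x)(\gamma\delta)=\left(x+(\gamma)\Theta\right)\delta=x+(\gamma)\Theta+(\delta)\Theta$. Thus $\operatorname{dom}(\gamma\delta)$ contains a final segment on which $\gamma\delta$ acts as the shift by $(\gamma)\Theta+(\delta)\Theta$, and therefore $(\gamma\delta)\Theta=(\gamma)\Theta+(\delta)\Theta$. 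Surjectivity of $\Theta$ is immediate from the inclusion $\mathscr{C}_{\mathbb{N}}\subseteq S$, because the generators of Remark~\ref{remark-1.1} satisfy $(\alpha^{n})\Theta=n$ and $(\beta^{n})\Theta=-n$ for every $n\in\mathbb{N}$, so every integer is attained.

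It then remains to identify the kernel congruence $\ker\Theta$, defined by $\gamma\,(\ker\Theta)\,\delta$ if and only if $(\gamma)\Theta=(\delta)\Theta$, with $\mathfrak{C}_{\mathbf{mg}}$. Since $\overrightarrow{\overrightarrow{\gamma}}=\overrightarrow{\gamma}$ we have $(\overrightarrow{\gamma})\Theta=(\gamma)\Theta$, so by Lemma~\ref{lemma-2.5}$(i)$ it suffices to compare $\overrightarrow{\gamma}$ and $\overrightarrow{\delta}$. These are shifts whose domains intersect in the infinite final segment $\left[\max\{n_{\gamma}^{\mathbf{d}},n_{\delta}^{\mathbf{d}}\}\right)$, and on this segment the equality $(n)\overrightarrow{\gamma}=(n)\overrightarrow{\delta}$ holds for all $n$ precisely when the two shift amounts agree, that is, when $(\gamma)\Theta=(\delta)\Theta$. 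Hence Lemma~\ref{lemma-2.5}$(ii)$ gives $\overrightarrow{\gamma}\,\mathfrak{C}_{\mathbf{mg}}\,\overrightarrow{\delta}$ if and only if $(\gamma)\Theta=(\delta)\Theta$, and combined with Lemma~\ref{lemma-2.5}$(i)$ this yields $\gamma\,\mathfrak{C}_{\mathbf{mg}}\,\delta$ if and only if $(\gamma)\Theta=(\delta)\Theta$. Thus $\ker\Theta=\mathfrak{C}_{\mathbf{mg}}$, and the first isomorphism theorem gives $S/\mathfrak{C}_{\mathbf{mg}}\cong\mathbb{Z}(+)$.

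I expect the main obstacle to be the homomorphism step: one must confirm that for sufficiently large $x$ the point $(x)\gamma=x+(\gamma)\Theta$ genuinely lands in $\operatorname{dom}\overrightarrow{\delta}$, so that the composite $\gamma\delta$ is nonempty on a final segment and really acts there as a single shift. This is exactly where the cofiniteness of domains and ranges in $\mathscr{I}_{\infty}^{\,\Rsh\!\!\!\nearrow}(\mathbb{N})$ is used; once it is in hand, the remaining assertions are direct applications of Lemma~\ref{lemma-2.5}.
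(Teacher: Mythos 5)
Your proposal is correct and follows essentially the same route as the paper's own proof: your $\Theta$ is exactly the paper's map $\mathfrak{F}\colon S\to\mathbb{Z}(+)$, $\alpha\mapsto(n)\overrightarrow{\alpha}-n$, and the identification of its kernel with $\mathfrak{C}_{\mathbf{mg}}$ via Lemma~\ref{lemma-2.5} is precisely the paper's argument. The only difference is that you spell out the well-definedness, the homomorphism property and surjectivity, which the paper compresses into ``simple verification implies''.
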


\begin{proof}
We define a map $\mathfrak{F}\colon S\to\mathbb{Z}(+)$, $\alpha\mapsto \mathfrak{i}_{\alpha}$  in the following way. Put $\mathfrak{i}_{\alpha}=(n)\overrightarrow{\alpha}-n$, where $n\in \operatorname{dom}\overrightarrow{\alpha}$. Simple verification implies that so defined map $\mathfrak{F}$ is correct and it is a homomorphism. Also, Lemma~\ref{lemma-2.5} implies that $\alpha\mathfrak{C}_{\mathbf{mg}}\beta$ if and only if $(\alpha)\mathfrak{F}=(\beta)\mathfrak{F}$ for $\alpha,\beta\in S$.
\end{proof}

Theorems~\ref{theorem-2.4} and~\ref{theorem-2.6} imply the following corollary.

\begin{corollary}\label{corollary-2.7}
Let $S$ be an inverse subsemigroup of $\mathscr{I}_{\infty}^{\,\Rsh\!\!\!\nearrow}(\mathbb{N})$ such that $S$ contains $\mathbf{I}\mathbb{N}_{\infty}^{[\underline{1}]}$ as a submonoid. Then for any non-injective homomorphism $\mathfrak{F}\colon S\to T$ into an arbitrary semigroup $T$ there exists a unique homomorphism $\mathfrak{H}\colon\mathbb{Z}(+)\to T$ such that the following diagram
\begin{equation*}
\xymatrix{
S\ar[r]^{\mathfrak{F}}\ar[d]_{\mathfrak{C}_{\mathbf{mg}}^{\sharp}} & T\\
\mathbb{Z}(+)\ar[ru]_{\mathfrak{H}}
}
\end{equation*}
commutes.
\end{corollary}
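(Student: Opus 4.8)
The plan is to realise $\mathfrak{H}$ as the homomorphism induced on the quotient by the universal property of the least group congruence, using Theorem~\ref{theorem-2.4} to force the kernel of $\mathfrak{F}$ to lie above $\mathfrak{C}_{\mathbf{mg}}$ and Theorem~\ref{theorem-2.6} to identify the quotient with $\mathbb{Z}(+)$. First I would observe that every partial shift is a partial isometry, so $\mathscr{C}_{\mathbb{N}}\subseteq\mathbf{I}\mathbb{N}_{\infty}^{[\underline{1}]}$; hence the hypothesis that $S$ contains $\mathbf{I}\mathbb{N}_{\infty}^{[\underline{1}]}$ as a submonoid guarantees that $S$ contains $\mathscr{C}_{\mathbb{N}}$ as a submonoid, and both Theorem~\ref{theorem-2.4} and Theorem~\ref{theorem-2.6} apply to $S$.

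Next I would pass to the kernel congruence $\ker\mathfrak{F}$, defined by $a\,(\ker\mathfrak{F})\,b$ if and only if $(a)\mathfrak{F}=(b)\mathfrak{F}$. Since $\mathfrak{F}$ is not injective, $\ker\mathfrak{F}$ identifies two distinct elements of $S$ and is therefore a non-identity congruence; by Theorem~\ref{theorem-2.4} it is consequently a group congruence. By the defining minimality of the least group congruence $\mathfrak{C}_{\mathbf{mg}}$ this yields $\mathfrak{C}_{\mathbf{mg}}\subseteq\ker\mathfrak{F}$. The inclusion means precisely that $\mathfrak{F}$ is constant on each $\mathfrak{C}_{\mathbf{mg}}$-class, so there is a well-defined map $\overline{\mathfrak{H}}\colon S/\mathfrak{C}_{\mathbf{mg}}\to T$ sending the $\mathfrak{C}_{\mathbf{mg}}$-class of $s$ to $(s)\mathfrak{F}$. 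That $\overline{\mathfrak{H}}$ is a homomorphism and that $\mathfrak{F}=\mathfrak{C}_{\mathbf{mg}}^{\sharp}\cdot\overline{\mathfrak{H}}$ are then immediate from the fact that $\mathfrak{C}_{\mathbf{mg}}^{\sharp}$ is a surjective homomorphism.

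Finally I would transport $\overline{\mathfrak{H}}$ along the isomorphism $S/\mathfrak{C}_{\mathbf{mg}}\cong\mathbb{Z}(+)$ supplied by Theorem~\ref{theorem-2.6} to obtain the desired $\mathfrak{H}\colon\mathbb{Z}(+)\to T$ satisfying $\mathfrak{F}=\mathfrak{C}_{\mathbf{mg}}^{\sharp}\cdot\mathfrak{H}$, which is exactly the assertion that the triangle commutes. For uniqueness, if $\mathfrak{H}'$ also makes the diagram commute, then $\mathfrak{C}_{\mathbf{mg}}^{\sharp}\cdot\mathfrak{H}=\mathfrak{C}_{\mathbf{mg}}^{\sharp}\cdot\mathfrak{H}'$, and the surjectivity of $\mathfrak{C}_{\mathbf{mg}}^{\sharp}$ forces $\mathfrak{H}=\mathfrak{H}'$. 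I do not expect a genuine obstacle here: the corollary is a formal consequence of the two quoted theorems together with the universal property of $\mathfrak{C}_{\mathbf{mg}}$, and the only point demanding a line of care is the preliminary remark that containing $\mathbf{I}\mathbb{N}_{\infty}^{[\underline{1}]}$ entails containing $\mathscr{C}_{\mathbb{N}}$, which is what licenses the appeal to Theorem~\ref{theorem-2.6}.
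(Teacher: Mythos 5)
Your proposal is correct and follows exactly the route the paper intends: the paper simply states that Theorems~\ref{theorem-2.4} and~\ref{theorem-2.6} imply the corollary, and your argument (kernel of $\mathfrak{F}$ is a non-identity, hence group, congruence; minimality gives $\mathfrak{C}_{\mathbf{mg}}\subseteq\ker\mathfrak{F}$; factor through $S/\mathfrak{C}_{\mathbf{mg}}\cong\mathbb{Z}(+)$; uniqueness from surjectivity of $\mathfrak{C}_{\mathbf{mg}}^{\sharp}$) is precisely the spelled-out version of that deduction. Your preliminary observation that $\mathscr{C}_{\mathbb{N}}\subseteq\mathbf{I}\mathbb{N}_{\infty}^{[\underline{1}]}$, which licenses the appeal to Theorem~\ref{theorem-2.6}, is a worthwhile detail the paper leaves implicit.
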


The semigroups $\mathscr{C}_{\mathbb{N}}$, $\mathscr{I}_{\infty}^{\!\nearrow}(\mathbb{N})$ and $\mathscr{I}_{\infty}^{\,\Rsh\!\!\!\nearrow}(\mathbb{N})$ are bisimple (see \cite{Clifford-Preston-1961-1967}, \cite{Gutik-Repovs-2011}, \cite{Chuchman-Gutik-2010}). But the semigroup $\mathbf{I}\mathbb{N}_{\infty}$ is not bisimple whereas it is simple. A very amazing property about some inverse subsemigroups of $\mathscr{I}_{\infty}^{\,\Rsh\!\!\!\nearrow}(\mathbb{N})$ illustrates the following theorem.

\begin{theorem}\label{theorem-2.8}
Let $S$ be an inverse subsemigroup of $\mathscr{I}_{\infty}^{\,\Rsh\!\!\!\nearrow}(\mathbb{N})$ such that $S$ contains $\mathscr{C}_{\mathbb{N}}$ as a submonoid. Then $S$ is simple.
\end{theorem}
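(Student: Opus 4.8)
The plan is to prove simplicity by showing that the principal two-sided ideal $S\alpha S$ coincides with the whole of $S$ for every $\alpha\in S$. Since $S$ is a monoid with identity $\mathbb{I}$, this reduces to establishing that $\mathbb{I}\in S\alpha S$ for each $\alpha\in S$: indeed, if $\mathbb{I}=u\alpha v$ for some $u,v\in S$, then every $s\in S$ satisfies $s=s\mathbb{I}=su\alpha v\in S\alpha S$, so $S=S\alpha S$; consequently any nonempty two-sided ideal of $S$, containing some $\alpha$, must contain $S\alpha S=S$ and hence be improper. This is the standard criterion for an inverse monoid to be simple.

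First I would reduce the problem to idempotents. Because $S$ is inverse, $\alpha\,\mathscr{R}\,\alpha\alpha^{-1}$ and $\alpha\,\mathscr{L}\,\alpha^{-1}\alpha$, and $\mathscr{R},\mathscr{L}\subseteq\mathscr{J}$; thus $\alpha$ and the idempotent $\alpha\alpha^{-1}$ generate the same principal two-sided ideal in $S$. Therefore it suffices to prove $\mathbb{I}\in S\varepsilon S$ for every idempotent $\varepsilon\in E(S)$.

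Next I would exploit the fact, recorded after Definition~\ref{definition-2.2}, that $E(S)=E(\mathscr{I}_{\infty}^{\,\Rsh\!\!\!\nearrow}(\mathbb{N}))$, so every $\varepsilon\in E(S)$ is the identity map of a cofinite set $\mathbb{N}\setminus F$ with $F$ finite. Choosing a positive integer $k$ with $[k)\subseteq\mathbb{N}\setminus F$, the tail idempotent $\varepsilon_k=\beta^{k-1}\alpha^{k-1}\in\mathscr{C}_{\mathbb{N}}$, which is the identity map of $[k)$, satisfies $\varepsilon\varepsilon_k=\varepsilon_k$ since the product of two such identity maps is the identity map on the intersection of their domains. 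Using the bicyclic relation $\alpha^{k-1}\beta^{k-1}=\mathbb{I}$ (an immediate consequence of $\alpha\beta=\mathbb{I}$) together with $\beta^{k-1}\alpha^{k-1}=\varepsilon_k$, I would compute
\begin{equation*}
\mathbb{I}=\alpha^{k-1}\beta^{k-1}\alpha^{k-1}\beta^{k-1}= \alpha^{k-1}\varepsilon_k\beta^{k-1}= \alpha^{k-1}\varepsilon\varepsilon_k\beta^{k-1}= \left(\alpha^{k-1}\right)\varepsilon\left(\varepsilon_k\beta^{k-1}\right).
\end{equation*}
Since $\alpha^{k-1}$ and $\varepsilon_k\beta^{k-1}$ both lie in $\mathscr{C}_{\mathbb{N}}\subseteq S$, this exhibits $\mathbb{I}\in S\varepsilon S$, which by the reductions above yields $S\alpha S=S$ for all $\alpha\in S$ and hence the simplicity of $S$.

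The main obstacle, such as it is, is the single nonroutine step of linking an arbitrary idempotent of $S$ (the identity map on an arbitrary cofinite set) to a \emph{tail} idempotent $\varepsilon_k$ of the bicyclic submonoid $\mathscr{C}_{\mathbb{N}}$, together with the verification of the relations $\varepsilon\varepsilon_k=\varepsilon_k$, $\alpha^{k-1}\beta^{k-1}=\mathbb{I}$, and $\beta^{k-1}\alpha^{k-1}=\varepsilon_k$. All of these follow directly from the partial-shift description of $\mathscr{C}_{\mathbb{N}}$ in Remark~\ref{remark-1.1} and the natural partial order on the band $E(S)$, so the argument remains elementary once the reduction to idempotents is in place.
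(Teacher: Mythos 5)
Your proof is correct, but it is organized differently from the paper's. The paper proves simplicity directly at the level of arbitrary elements: for each $\beta\in S$ it exhibits $\gamma,\delta\in S$ with $\gamma\beta\delta=\mathbb{I}$, where $\gamma$ is a power of the shift and $\delta=\overrightarrow{\beta}^{-1}\gamma^{-1}$ is built from the tail restriction $\overrightarrow{\beta}\in\mathscr{C}_{\mathbb{N}}$ of $\beta$; the key verification there is that $\beta\overrightarrow{\beta}^{-1}=\overrightarrow{\beta}\overrightarrow{\beta}^{-1}$, so composing with $\overrightarrow{\beta}^{-1}$ cancels the action of $\beta$ on its tail. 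You instead interpose a reduction to idempotents (via $\alpha\,\mathscr{J}\,\alpha\alpha^{-1}$ in any inverse semigroup) and then run what is morally the same conjugation-by-shifts computation, but only for a partial identity $\varepsilon$, replacing the paper's tail-restriction machinery ($\overrightarrow{\beta}$, $n_{\beta}^{\mathbf{d}}$) by the single order relation $\varepsilon\varepsilon_k=\varepsilon_k$ against a tail idempotent $\varepsilon_k=\beta^{k-1}\alpha^{k-1}$ of $\mathscr{C}_{\mathbb{N}}$. Your route buys a cleaner division of labor (standard inverse-semigroup facts plus the bicyclic relations, nothing else); the paper's route is more hands-on and produces the sandwiching elements explicitly for \emph{every} element of $S$, not just for idempotents.

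One inaccuracy should be repaired. You assert that $E(S)=E(\mathscr{I}_{\infty}^{\,\Rsh\!\!\!\nearrow}(\mathbb{N}))$ is ``recorded after Definition~\ref{definition-2.2}''. What the paper records there is the equality of bands for $\mathbf{I}\mathbb{N}_{\infty}$, $\mathbf{I}\mathbb{N}_{\infty}^{[\underline{1}]}$, $\mathscr{I}_{\infty}^{\!\nearrow}(\mathbb{N})$ and $\mathscr{I}_{\infty}^{\,\Rsh\!\!\!\nearrow}(\mathbb{N})$, not for an arbitrary inverse subsemigroup $S$ containing $\mathscr{C}_{\mathbb{N}}$; and for such $S$ the equality is false in general --- $S=\mathscr{C}_{\mathbb{N}}$ itself satisfies the hypotheses of the theorem, yet $E(\mathscr{C}_{\mathbb{N}})$ consists only of the tail identities. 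Fortunately your argument uses only the inclusion $E(S)\subseteq E(\mathscr{I}_{\infty}^{\,\Rsh\!\!\!\nearrow}(\mathbb{N}))$, which is automatic and already gives that every idempotent of $S$ is the identity map of a cofinite subset of $\mathbb{N}$; with that substitution the proof stands. A cosmetic point: $\alpha$ denotes an arbitrary element of $S$ in your reduction step and the generating shift of $\mathscr{C}_{\mathbb{N}}$ in your final computation; the two should be given distinct names.
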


\begin{proof}
Since $\alpha=\alpha\mathbb{I}=\mathbb{I}\alpha$ for any element $\alpha$ of $S$, it is sufficient to show that for every $\beta\in S$ there exist $\gamma,\delta\in S$ such that $\gamma\beta\delta=\mathbb{I}$.

Fix an arbitrary element $\beta$ in $S$. Simple verifications show that $\beta\overrightarrow{\beta}^{-1}=\overrightarrow{\beta}\overrightarrow{\beta}^{-1}$ and $\beta\overrightarrow{\beta}^{-1}$ is an idempotent of $S$, where $\overrightarrow{\beta}^{-1}$ is inverse of $\overrightarrow{\beta}$ in $S$, because $\overrightarrow{\beta}$ and $\overrightarrow{\beta}^{-1}$ are elements of the sebsemigroup  $\mathscr{C}_{\mathbb{N}}$ in $S$. Next we define a partial maps $\gamma\colon\mathbb{N}\rightharpoonup \mathbb{N}$ in the following way
\begin{equation*}
  \operatorname{dom}\gamma=\mathbb{N}, \qquad \operatorname{ran}\gamma=\left\{n\in\mathbb{N}\colon n\geqslant n_{\gamma}^{\mathbf{d}}\right\} \qquad \hbox{and} \qquad (i)\gamma=i-1+n_{\gamma}^{\mathbf{d}}  \quad \hbox{for} \quad i\in\operatorname{dom}\gamma.
\end{equation*}
Then $\gamma\beta(\overrightarrow{\beta}^{-1}\gamma^{-1})=\mathbb{I}$.
\end{proof}


\section{On shift-continuous topologies on inverse subsemigroups of $\mathscr{I}_{\infty}^{\,\Rsh\!\!\!\nearrow}(\mathbb{N})$}

 A subset $A$ of a topological space $X$ is said to be \emph{co-dense} in $X$ if $X\setminus A$ is dense in $X$.

\smallskip

We recall that a topological space X is said to be:
\begin{itemize}
    \item \emph{compact} if every open cover of $X$ contains a finite subcover;
    \item \emph{countably compact} if each closed discrete subspace of $X$ is finite;
    \item \emph{feebly compact} if each locally finite open cover of $X$ is finite;
    \item \emph{pseudocompact} if $X$ is Tychonoff and each continuous real-valued function on $X$ is bounded;
    \item \emph{locally compact} if each point of $X$ has an open neighbourhood with the compact closure;
    \item \emph{\v{C}ech-complete} if $X$ is Tychonof and there exists a compactifcation $cX$ of $X$ such that the remainder $cX\setminus c(X)$ is an $F_\sigma$-set in $cX$;
    \item \emph{a Baire space} if for each sequence $A_1,A_2,\ldots,A_i,\ldots$ of nowhere dense subsets of $X$ the union $\bigcup_{i=1}^\infty A_i$ is a co-dense subset of $X$.
\end{itemize}
According to Theorem~3.10.22 of \cite{Engelking-1989}, a Tychonoff topological space $X$ is feebly compact if and only if $X$ is pseudocompact. Also, a Hausdorff topological space $X$ is feebly compact if and only if every locally finite family of non-empty open subsets of $X$ is finite. Every compact space is countably compact and every countably compact space is feebly compact (see \cite{Arkhangelskii-1992}). Also, every compact space is locally compact, every locally compact space is \v{C}ech-complete, and every \v{C}ech-complete space is a Baire space (see \cite{Engelking-1989}).

By the Eberhart-Selden theorem every Hausdorff semigroup topology on the bicyclic semigroup is discrete. It is natural to ask: \emph{Do there exists non-discrete semigroup topology on the semigroup $\mathbf{I}\mathbb{N}_{\infty}$?}

\begin{theorem}\label{theorem-3.1}
Let $S$ be an inverse subsemigroup of $\mathscr{I}_{\infty}^{\,\Rsh\!\!\!\nearrow}(\mathbb{N})$ such that $S$ contains $\mathscr{C}_{\mathbb{N}}$ as a submonoid. Then every Baire shift-continuous Hausdorff topology $\tau$ on $S$ is discrete.
\end{theorem}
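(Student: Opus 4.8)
The plan is to let the Baire property do only one job — manufacture a single isolated point — and then let the simplicity of $S$ from Theorem~\ref{theorem-2.8} spread isolation across the whole semigroup. First I would note that $S$ is countable, because $\mathscr{I}_{\infty}^{\,\Rsh\!\!\!\nearrow}(\mathbb{N})$ consists of partial bijections of $\mathbb{N}$ with cofinite domain and range, of which there are only countably many. A nonempty countable Baire $T_1$-space must have an isolated point: otherwise every singleton is closed with empty interior, hence nowhere dense, and then $S$ is the countable union of these nowhere dense singletons, which by the very definition of a Baire space is co-dense in $S$ — i.e. $\varnothing$ would be dense in the nonempty space $S$, an absurdity. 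So there is an isolated point $\chi\in S$, and $\{\chi\}$ is open.

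Next I would fix an arbitrary $a\in S$ and aim to show $\{a\}$ is open. Since $S$ is a monoid (it contains $\mathscr{C}_{\mathbb{N}}\ni\mathbb{I}$) and is simple by Theorem~\ref{theorem-2.8}, the principal two-sided ideal generated by $a$ is all of $S$, so there are $u,w\in S$ with $uaw=\chi$. Suppose, toward a contradiction, that $a$ is not isolated; then $a$ lies in the closure of $S\setminus\{a\}$, so some net $(g_d)$ in $S\setminus\{a\}$ satisfies $g_d\to a$. Because $\tau$ is shift-continuous, left translation by $u$ followed by right translation by $w$ is continuous, so $ug_dw\to uaw=\chi$. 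As $\{\chi\}$ is open, a net converging to $\chi$ is eventually equal to $\chi$; hence $g_d$ lies eventually in the fibre $F=\{x\in S\colon uxw=\chi\}$.

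The decisive step, and the one I expect to carry the real weight, is the finite-fibre lemma: for fixed $u,w\in S$ the set $F$ is finite. To prove it I would multiply $uxw=\chi$ on the left by $u^{-1}$ and on the right by $w^{-1}$, obtaining $(u^{-1}u)\,x\,(ww^{-1})=u^{-1}\chi w^{-1}=:y_0$. Since $u^{-1}u$ and $ww^{-1}$ are the identity maps of the cofinite sets $\operatorname{ran}u$ and $\operatorname{dom}w$, this equation forces $x$ to coincide with the fixed partial map $y_0$ on the cofinite set $\operatorname{dom}y_0$. Consequently any two members of $F$ differ only on a finite subset of $\mathbb{N}$, and there are only finitely many injective partial selfmaps that agree with $y_0$ off a finite set, so $F$ is finite. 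Here the cofiniteness of domains and ranges of elements of $\mathscr{I}_{\infty}^{\,\Rsh\!\!\!\nearrow}(\mathbb{N})$ is exactly the structural input that converts "one isolated point" into global discreteness.

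Finally I would close the argument: $(g_d)$ is eventually a net inside the finite set $F$ with $g_d\to a$ and $a\in F$; as $(S,\tau)$ is Hausdorff, $F$ is a discrete subspace, so the net is eventually constant and equal to $a$, contradicting $g_d\neq a$. Hence $a$ is isolated, and since $a$ was arbitrary, $\tau$ is discrete. The only subtleties beyond the finite-fibre lemma are topological bookkeeping — one must work with nets rather than sequences, since $\tau$ is not assumed first countable, and must use only the separate continuity of the multiplication, never continuity of inversion or joint continuity.
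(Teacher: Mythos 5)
Your proof is correct and follows essentially the same route as the paper's: a countable Baire Hausdorff space must have an isolated point, simplicity (Theorem~\ref{theorem-2.8}) transports it to an arbitrary element via $uaw=\chi$, and shift-continuity together with finiteness of the fibre $\left\{x\in S\colon uxw=\chi\right\}$ forces every point to be isolated. The only differences are cosmetic: the paper obtains fibre-finiteness by citing Proposition~1.2 of \cite{Chuchman-Gutik-2010} on the one-sided equations $\alpha\chi=\beta$ and $\chi\alpha=\beta$, whereas you derive it directly from the cofiniteness of domains and ranges (your computation with $u^{-1}u$ and $ww^{-1}$ is valid), and you close with a net argument where the paper simply observes that a finite open set in a Hausdorff space is discrete.
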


\begin{proof}
If no point in $S$ is isolated,  then since the space $(S,\tau)$ is Hausdorff, it follows that $\{\alpha\}$ is nowhere dense for all $\alpha\in S$. But, if this is the case, then since the semigroup $S$ is countable it cannot be a Baire space. Hence the space $(S,\tau)$  contains an isolated point $\mu$. If $\gamma\in S$ is arbitrary, then by Theorem~\ref{theorem-2.8}, there exist $\alpha,\beta\in S$ such that $\alpha\cdot\gamma\cdot\beta=\mu$. The map $f\colon\chi\mapsto\alpha\cdot\chi\cdot\beta$ is continuous and so the full preimage $(\{\mu\})f^{-1}$ is open. By Proposition~1.2 from \cite{Chuchman-Gutik-2010} for every
$\alpha,\beta\in\mathscr{I}_{\infty}^{\,\Rsh\!\!\!\nearrow}(\mathbb{N})$,
both sets
 $
\{\chi\in\mathscr{I}_{\infty}^{\,\Rsh\!\!\!\nearrow}(\mathbb{N})\mid
\alpha\cdot\chi=\beta\}
 $
 and
 $
\{\chi\in\mathscr{I}_{\infty}^{\,\Rsh\!\!\!\nearrow}(\mathbb{N})\mid
\chi\cdot\alpha=\beta\}
 $
are finite, and hence the same holds for the subsemigroup $S$ of $\mathscr{I}_{\infty}^{\,\Rsh\!\!\!\nearrow}(\mathbb{N})$. This implies that the set
$(\{\mu\})f^{-1}$ is finite and since $(S,\tau)$ is Hausdorff, $\{\gamma\}$ is open, and hence isolated.
\end{proof}

Since every \v{C}ech complete space (and hence every locally compact
space) is Baire, Theorem~\ref{theorem-3.1} implies
Corollary~\ref{corollary-3.2}.

\begin{corollary}\label{corollary-3.2}
Let $S$ be an inverse subsemigroup of $\mathscr{I}_{\infty}^{\,\Rsh\!\!\!\nearrow}(\mathbb{N})$ such that $S$ contains $\mathscr{C}_{\mathbb{N}}$ as a submonoid. Then every Hausdorff \v{C}ech complete (locally compact) shift-continuous topology $\tau$
on $S$  is discrete.
\end{corollary}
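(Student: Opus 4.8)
The plan is to obtain this as a direct formal consequence of Theorem~\ref{theorem-3.1}, using only the standard implications between compactness-type properties recalled earlier in this section. First I would invoke the chain of implications stated just after the list of topological properties: every locally compact space is \v{C}ech-complete, and every \v{C}ech-complete space is a Baire space (as cited from \cite{Engelking-1989}). Hence any Hausdorff topology $\tau$ on $S$ that is locally compact, or more generally \v{C}ech-complete, is in particular a Baire topology on $S$.

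Next, suppose such a $\tau$ is additionally shift-continuous. Then all hypotheses of Theorem~\ref{theorem-3.1} are met for the triple $(S,\tau)$: the topology is Baire, shift-continuous and Hausdorff, and by assumption $S$ is an inverse subsemigroup of $\mathscr{I}_{\infty}^{\,\Rsh\!\!\!\nearrow}(\mathbb{N})$ containing $\mathscr{C}_{\mathbb{N}}$ as a submonoid. Applying Theorem~\ref{theorem-3.1} immediately gives that $\tau$ is discrete, which is exactly the assertion of the corollary.

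There is no genuine obstacle here, since the corollary is a pure specialisation of the theorem; the only content is the observation that local compactness (resp.\ \v{C}ech-completeness) forces the Baire property, so that the single crucial hypothesis of Theorem~\ref{theorem-3.1} is automatically available. It is perhaps worth noting \emph{why} the Baire assumption is the essential ingredient in the argument behind Theorem~\ref{theorem-3.1}: countability of $S$ together with the Baire property rules out a topology in which every singleton is nowhere dense, forcing the existence of an isolated point, and then the simplicity of $S$ (Theorem~\ref{theorem-2.8}) combined with the finiteness of the one-sided translation preimages in $\mathscr{I}_{\infty}^{\,\Rsh\!\!\!\nearrow}(\mathbb{N})$ propagates this isolation to every point of $S$. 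Thus the step to highlight is simply the transfer of the Baire property through the implications \emph{locally compact} $\Rightarrow$ \emph{\v{C}ech-complete} $\Rightarrow$ \emph{Baire}, after which Theorem~\ref{theorem-3.1} does all the work.
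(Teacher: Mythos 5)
Your proposal is correct and follows exactly the paper's own argument: the paper derives Corollary~\ref{corollary-3.2} from Theorem~\ref{theorem-3.1} by precisely the observation that every locally compact space is \v{C}ech-complete and every \v{C}ech-complete space is Baire. Your additional remarks on why the Baire hypothesis drives the proof of Theorem~\ref{theorem-3.1} are accurate but not needed for the deduction itself.
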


The following example shows that there exists a non-discrete
Tychonoff inverse semigroup topology $\tau_W$ on the semigroup
$\mathbf{I}\mathbb{N}_{\infty}$.

\begin{example}\label{example-3.4}
We define a topology $\tau_{W}$ on the semigroup $\mathbf{I}\mathbb{N}_{\infty}$ as follows. For every $\alpha\in\mathbf{I}\mathbb{N}_{\infty}$ we define a family
\begin{equation*}
    \mathscr{B}_{W}(\alpha)=\left\{U_\alpha(F)\mid F \mbox{ is a finite subset of } \operatorname{dom}\alpha\right\},
\end{equation*}
where
\begin{equation*}
    U_\alpha(F)= \left\{\beta\in\mathbf{I}\mathbb{N}_{\infty} \mid \operatorname{dom}\beta\subseteq\operatorname{dom}\alpha
    \mbox{ and }  (x)\beta=(x)\alpha \mbox{ for all } x\in F\right\}.
\end{equation*}
It is straightforward to verify that
$\{\mathscr{B}_{W}(\alpha)\}_{\alpha\in
\mathscr{I}^{\!\nearrow}_{\infty}(\mathbb{Z})}$ forms a basis for a
topology $\tau_{W}$ on the semigroup
$\mathbf{I}\mathbb{N}_{\infty}$.
\end{example}

\begin{proposition}\label{proposition-3.5}
$(\mathbf{I}\mathbb{N}_{\infty},\tau_{W})$ is a Tychonoff topological inverse semigroup.
\end{proposition}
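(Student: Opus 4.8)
The plan is to establish the statement in two independent parts: first that $\tau_W$ is Tychonoff, and then that multiplication and inversion on $\mathbf{I}\mathbb{N}_{\infty}$ are $\tau_W$-continuous. Both parts rest on a single structural observation which I would isolate at the outset: every $\alpha\in\mathbf{I}\mathbb{N}_{\infty}$ acts on its domain as a translation, i.e.\ there is an integer $c_\alpha$ with $(n)\alpha=n+c_\alpha$ for all $n\in\operatorname{dom}\alpha$. Indeed a partial isometry of $(\mathbb{N},d)$ with at least two points extends to a global isometry $x\mapsto\pm x+c$ of $\mathbb{R}$, and the orientation-reversing case is excluded because $\operatorname{dom}\alpha$ is cofinite, hence unbounded, so $-x+c$ would eventually leave $\mathbb{N}$. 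The consequence I really need is a \emph{rigidity} property of the basic sets: if $F\neq\varnothing$ and $\beta\in U_\alpha(F)$, then $c_\beta=c_\alpha$ (they agree at any point of $F$), and since $\operatorname{dom}\beta\subseteq\operatorname{dom}\alpha$ this forces $\beta=\alpha|_{\operatorname{dom}\beta}$; that is, $U_\alpha(F)$ consists precisely of the restrictions of $\alpha$ whose domain is cofinite and contains $F$.

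For the Tychonoff property I would realise $(\mathbf{I}\mathbb{N}_{\infty},\tau_W)$ as a subspace of a power of a discrete space. Put $D=\mathbb{N}\cup\{\infty\}$ with the discrete topology and send each $\alpha$ to $\widehat{\alpha}\in D^{\mathbb{N}}$, where $(x)\widehat{\alpha}=(x)\alpha$ for $x\in\operatorname{dom}\alpha$ and $(x)\widehat{\alpha}=\infty$ otherwise; this map is injective. The point is that the clause $\operatorname{dom}\beta\subseteq\operatorname{dom}\alpha$ is a \emph{finite} family of coordinate conditions, namely $(x)\widehat{\beta}=\infty$ for $x$ in the finite set $\mathbb{N}\setminus\operatorname{dom}\alpha$; together with the finitely many conditions $(x)\widehat{\beta}=(x)\alpha$ for $x\in F$, this exhibits each $U_\alpha(F)$ as a basic cylinder of $D^{\mathbb{N}}$. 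Conversely every finite-coordinate cylinder meeting the image is a union of sets of the form $U_\alpha(F)$, so $\alpha\mapsto\widehat{\alpha}$ is a topological embedding. Since $D^{\mathbb{N}}$ is a countable product of discrete spaces it is Tychonoff (in fact metrizable), and therefore so is its subspace $(\mathbf{I}\mathbb{N}_{\infty},\tau_W)$.

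For continuity of the multiplication at a pair $(\alpha,\beta)$ with product $\gamma=\alpha\beta$, I would fix a basic neighbourhood $U_\gamma(F)$ and, enlarging $F$ to a nonempty set if necessary, set $A=F\subseteq\operatorname{dom}\alpha$ and $B=(F)\alpha\subseteq\operatorname{dom}\beta$ (note $(x)\alpha\in\operatorname{dom}\beta$ for $x\in F\subseteq\operatorname{dom}\gamma$). Given $\alpha'\in U_\alpha(A)$ and $\beta'\in U_\beta(B)$, the rigidity property makes $\alpha'$ and $\beta'$ restrictions of $\alpha$ and $\beta$, whence $\gamma'=\alpha'\beta'$ is a restriction of $\gamma$ with $\operatorname{dom}\gamma'\subseteq\operatorname{dom}\gamma$; the choices of $A$ and $B$ guarantee $F\subseteq\operatorname{dom}\gamma'$, so $\gamma'\in U_\gamma(F)$, i.e.\ $U_\alpha(A)\cdot U_\beta(B)\subseteq U_\gamma(F)$. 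Continuity of inversion is analogous: for a neighbourhood $U_{\alpha^{-1}}(G)$ of $\alpha^{-1}$ I would take $F=(G)\alpha^{-1}$ (again enlarged to be nonempty), so that for $\beta\in U_\alpha(F)$ rigidity gives $\beta^{-1}=\alpha^{-1}|_{\operatorname{ran}\beta}$ with $G\subseteq\operatorname{ran}\beta=\operatorname{dom}\beta^{-1}$, whence $\beta^{-1}\in U_{\alpha^{-1}}(G)$.

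The main obstacle, and the place I would spend the most care, is the interaction between the domain-containment clause $\operatorname{dom}\beta\subseteq\operatorname{dom}\alpha$ built into $U_\alpha(F)$ and the empty-$F$ corner case: when $F=\varnothing$ the rigidity property fails, since $U_\alpha(\varnothing)$ contains translations of every slope whose domain merely fits inside $\operatorname{dom}\alpha$. Consequently both the multiplication and inversion arguments must always be run with nonempty finite sets. Because $\operatorname{dom}\alpha$ is cofinite this enlargement is harmless, but it must be performed explicitly; once the neighbourhoods are nonempty, the translation rigidity reduces every verification to a bookkeeping check on finitely many coordinates.
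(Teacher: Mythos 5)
Your proof is correct and takes essentially the same route as the paper's: continuity of multiplication and inversion is verified on the basic sets $U_\alpha(F)$ via inclusions of the form $U_\alpha(A)\cdot U_\beta(B)\subseteq U_{\alpha\beta}(F)$ and $\bigl(U_\gamma(F)\bigr)^{-1}\subseteq U_{\gamma^{-1}}\bigl((F)\gamma\bigr)$, and the Tychonoff property is obtained by embedding $(\mathbf{I}\mathbb{N}_{\infty},\tau_W)$ into the product $\left(\mathbb{N}\cup\{a\}\right)^{\mathbb{N}}$ of discrete spaces, exactly as in the paper. The only difference is one of explicitness: you isolate the translation-rigidity of cofinite partial isometries and the empty-$F$ caveat, both of which the paper's ``simple verifications'' leave implicit.
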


\begin{proof}
Let $\alpha$ and $\beta$ be arbitrary elements of the semigroup $\mathbf{I}\mathbb{N}_{\infty}$. We put $\gamma=\alpha\cdot\beta$ and let $F=\{n_1,\ldots,n_i\}$ be a finite subset of $\operatorname{dom}\gamma$. We denote $m_1=(n_1)\alpha,\ldots,m_i=(n_i)\alpha$ and $k_1=(n_1)\gamma,\ldots,k_i=(n_i)\gamma$. Then we get that $(m_1)\beta=k_1,\ldots,(m_i)\beta=k_i$. Hence we have that
\begin{equation*}
    U_\alpha(\{n_1,\ldots,n_i\})\cdot
    U_\beta(\{m_1,\ldots,m_i\})\subseteq
    U_\gamma(\{n_1,\ldots,n_i\})
\end{equation*}
and
\begin{equation*}
    \big(U_\gamma(\{n_1,\ldots,n_i\})\big)^{-1}\subseteq
    U_{\gamma^{-1}}(\{k_1,\ldots,k_i\}).
\end{equation*}
Therefore the semigroup operation and the inversion are continuous in $(\mathbf{I}\mathbb{N}_{\infty},\tau_{W})$.

Let $N=\mathbb{N}\cup\{a\}$ for some $a\notin\mathbb{N}$. Then $N^N$  with the operation composition is a semigroup and the map
$\Psi\colon\mathbf{I}\mathbb{N}_{\infty}\rightarrow N^N$ defined by the formula
\begin{equation*}
    (x)(\alpha)\Psi=
\left\{
  \begin{array}{ll}
    (x)\alpha, & \hbox{if } x\in\operatorname{dom}\alpha; \\
    a,         & \hbox{if } x\notin\operatorname{dom}\alpha
  \end{array}
\right.
\end{equation*}
is a monomorphism. Hence $N^N$ is a topological semigroup with the product topology if $N$ has the discrete topology. Obviously, this topology generates topology $\tau_W$ on
$\mathbf{I}\mathbb{N}_{\infty}$. Therefore by Theorem~2.3.11 from \cite{Engelking-1989} topological space $N^N$ is Tychonoff and hence by Theorem~2.1.6 from \cite{Engelking-1989} so is $(\mathbf{I}\mathbb{N}_{\infty},\tau_{W})$. This completes the proof of the proposition.
\end{proof}

\begin{theorem}\label{theorem-3.6}
Let $S$ be an inverse subsemigroup of $\mathscr{I}_{\infty}^{\,\Rsh\!\!\!\nearrow}(\mathbb{N})$ such that $S$ contains $\mathscr{C}_{\mathbb{N}}$ as a submonoid.
Let $T$ be a $T_1$ semitopological semigroup which contains $S$ as a dense discrete subsemigroup. If $I=T\setminus S \neq\varnothing$ then $I$ is an ideal of $T$.
\end{theorem}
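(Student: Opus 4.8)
The plan is to prove that $I=T\setminus S$ is an ideal by verifying directly that $T\cdot I\subseteq I$ and $I\cdot T\subseteq I$; since $I\neq\varnothing$ is assumed, these two inclusions are exactly what is required. I would argue by contradiction, using the same package of tools already deployed in the proof of Theorem~\ref{theorem-3.1}: the subspace $S$ is discrete, so for every $a\in S$ there is an open set $U\subseteq T$ with $U\cap S=\{a\}$; the right and left translations $\rho_c\colon x\mapsto xc$ and $\lambda_c\colon x\mapsto cx$ are continuous for each fixed $c\in T$ because $T$ is semitopological; the space $T$ is $T_1$, so finite subsets are closed; the set $S$ is dense in $T$; and, by Proposition~1.2 of \cite{Chuchman-Gutik-2010}, for all $\alpha,\beta\in\mathscr{I}_{\infty}^{\,\Rsh\!\!\!\nearrow}(\mathbb{N})$ (hence for all $\alpha,\beta\in S$) the one-sided solution sets $\{\chi\in S\colon\alpha\chi=\beta\}$ and $\{\chi\in S\colon\chi\alpha=\beta\}$ are finite.

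For the inclusion $T\cdot I\subseteq I$, I would fix $t\in T$ and $q\in I$ and suppose, towards a contradiction, that $tq=a\in S$; choose an open $U\subseteq T$ with $U\cap S=\{a\}$. The core device is a \emph{two-step} translation argument. First, since $\rho_q$ is continuous and sends $t$ into $U$, the set $\rho_q^{-1}(U)$ is an open neighbourhood of $t$, and density of $S$ produces an element $x\in S$ with $xq\in U$. Second, I switch to the left translation by this \emph{$S$-element} $x$: the map $\lambda_x$ is continuous and sends $q$ into $U$, so $\lambda_x^{-1}(U)$ is an open neighbourhood of $q$, and for every $z\in\lambda_x^{-1}(U)\cap S$ we have $xz\in U$ with $xz\in S$, whence $xz\in U\cap S=\{a\}$. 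Thus $\lambda_x^{-1}(U)\cap S\subseteq\{z\in S\colon xz=a\}$, a finite set. The argument for $I\cdot T\subseteq I$ is the left–right mirror image: starting from $p\in I$, $t\in T$ with $pt=a\in S$, one uses $\lambda_p$ to obtain a witness $x\in S$ with $px\in U$, and then finishes with $\rho_x$, which lands the relevant products in $\{z\in S\colon zx=a\}$.

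The point I expect to be the main obstacle is precisely what forces the two-step device. When the multiplying factor already lies in $S$ (the cases $S\cdot I$ and $I\cdot S$), a single translation by that $S$-element suffices, since a product of two elements of $S$ stays in $S$ and is therefore pinned to the singleton $U\cap S=\{a\}$. But when the multiplying factor lies in $I$ (the case $I\cdot I$), a single translation fails: products such as $px$ with $p\in I$ and $x\in S$ lie in $I$ by the cases already handled, so they fall into $U\cap I$, which is uncontrolled and yields no finiteness. The resolution is to \emph{first} spend density to manufacture an honest $S$-witness $x$, and only \emph{then} translate by $x$, so that the finiteness conditions of \cite{Chuchman-Gutik-2010} apply to products lying inside $S$. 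Once finiteness is in hand, each case closes uniformly: $\lambda_x^{-1}(U)\cap S$ (respectively $\rho_x^{-1}(U)\cap S$) is finite, hence closed in the $T_1$ space $T$, so deleting it from the open neighbourhood of the $I$-point $q$ (respectively $p$) leaves a nonempty open set disjoint from $S$, contradicting the density of $S$ in $T$. This contradiction establishes both inclusions, and since $I\neq\varnothing$, it follows that $I$ is an ideal of $T$.
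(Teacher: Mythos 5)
Your proof is correct, and it is organized genuinely differently from the paper's. The paper argues by cases: first it proves the mixed inclusions $S\cdot I\subseteq I$ and $I\cdot S\subseteq I$ by a one-step translation argument --- given $x\in S$, $y\in I$ with $xy=z\in S$, separate continuity together with the fact that $\{z\}$ is \emph{open} in $T$ (this uses that $S$ is an open subspace of $T$, quoted from Lemma~3 of \cite{Gutik-Savchuk-2017}) yields a neighbourhood $U(y)$ with $\{x\}\cdot U(y)=\{z\}$, so $U(y)\cap S$ lies in a finite solution set from Proposition~1.2 of \cite{Chuchman-Gutik-2010}, contradicting density; it then settles the remaining case $I\cdot I\subseteq I$ not by finiteness again but by bootstrapping from the mixed cases: if $x,y\in I$ and $xy=w\in S$, then $\{x\}\cdot\left(U(y)\cap S\right)\subseteq I$ by the part already proved, which is incompatible with $\{x\}\cdot U(y)=\{w\}\subseteq S$. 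Your two-step device replaces this bootstrap: applying density to the open set $\rho_q^{-1}(U)\ni t$ manufactures an $S$-witness $x$ with $xq\in U$, after which the mixed-case mechanism (translate by the $S$-element $x$, pin products of $S$-points to $U\cap S=\{a\}$, invoke finiteness, delete a finite closed set, contradict density) runs uniformly for all $t\in T$, with no case distinction on whether $t\in S$ or $t\in I$. What each approach buys: the paper's case split makes the $I\cdot I$ step cost-free once the mixed cases are known, but it needs singletons of $S$ to be open in $T$, hence the extra citation; your version invokes the finiteness of solution sets in every case, but only ever multiplies elements of $S$ and compares against $U\cap S=\{a\}$, so subspace discreteness of $S$ suffices and the appeal to Lemma~3 of \cite{Gutik-Savchuk-2017} can be dispensed with entirely.
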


\begin{proof}
By Lemma~3 \cite{Gutik-Savchuk-2017}, $S$ is an open subspace of the topological space $T$.

Fix an arbitrary element $y\in I$. If $x\cdot y=z\notin I$ for some $x\in S$ then there exists an open neighbourhood $U(y)$ of the point $y$ in the space $T$ such that $\{x\}\cdot U(y)=\{z\}\subset S$. By Proposition~1.2 from \cite{Chuchman-Gutik-2010} the open neighbourhood $U(y)$ should contain finitely many elements of the semigroup $S$ which contradicts our assumption. Hence $x\cdot y\in I$ for all $x\in S$ and $y\in I$. The proof of the statement that $y\cdot x\in I$ for all $x\in S$ and $y\in I$ is similar.

Suppose to the contrary that $x\cdot y=w\notin I$ for some $x,y\in I$. Then $w\in S$ and the separate continuity of the semigroup operation in $T$ yields open neighbourhoods $U(x)$ and $U(y)$ of the points $x$ and $y$ in the space $T$, respectively, such that $\{x\}\cdot U(y)=\{w\}$ and $U(x)\cdot \{y\}=\{w\}$. Since both neighbourhoods $U(x)$ and $U(y)$ contain infinitely many elements of the semigroup $S$,  equalities $\{x\}\cdot U(y)=\{w\}$ and $U(x)\cdot \{y\}=\{w\}$ do not hold, because $\{x\}\cdot \left(U(y)\cap S\right)\subseteq I$. The obtained contradiction implies that $x\cdot y\in I$.
\end{proof}

Theorem~\ref{theorem-3.6} implies the following corollary:

\begin{corollary}\label{corollary-3.6-1}
Let $T$ be a $T_1$ semitopological semigroup which contains $\mathbf{I}\mathbb{N}_{\infty}$ as a dense discrete submonoid. If $I=T\setminus \mathbf{I}\mathbb{N}_{\infty} \neq\varnothing$ then $I$ is an ideal of $T$.
\end{corollary}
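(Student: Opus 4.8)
The plan is to deduce Corollary~\ref{corollary-3.6-1} as a direct instance of Theorem~\ref{theorem-3.6}, so the entire argument reduces to verifying that the hypotheses of the theorem are met when we take $S=\mathbf{I}\mathbb{N}_{\infty}$. First I would observe that $\mathbf{I}\mathbb{N}_{\infty}$ is an inverse submonoid of $\mathscr{I}_{\infty}^{\,\Rsh\!\!\!\nearrow}(\mathbb{N})$; this is recorded in the discussion preceding Definition~\ref{definition-2.2}, where it is noted that $\mathbf{I}\mathbb{N}_{\infty}$ is an inverse submonoid of $\mathbf{I}\mathbb{N}_{\infty}^{[\underline{1}]}$, which in turn is an inverse submonoid of $\mathscr{I}_{\infty}^{\!\nearrow}(\mathbb{N})\subseteq\mathscr{I}_{\infty}^{\,\Rsh\!\!\!\nearrow}(\mathbb{N})$.

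The key remaining point is that $\mathbf{I}\mathbb{N}_{\infty}$ contains $\mathscr{C}_{\mathbb{N}}$ as a submonoid. I would verify this by recalling that $\mathscr{C}_{\mathbb{N}}$ is generated by the partial shift $\alpha\colon n\mapsto n+1$ and its inverse $\beta$, and that these are genuine partial isometries of $(\mathbb{N},d)$ with the usual metric $d(n,m)=|n-m|$: translation preserves distances. Since the set of partial cofinite isometries is closed under composition and inversion, every element of $\mathscr{C}_{\mathbb{N}}=\langle\alpha,\beta\rangle$ is a cofinite partial isometry, whence $\mathscr{C}_{\mathbb{N}}\subseteq\mathbf{I}\mathbb{N}_{\infty}$, and both monoids share the identity $\mathbb{I}$, so the inclusion is as a submonoid.

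With these two facts in hand, the hypotheses of Theorem~\ref{theorem-3.6} are satisfied with $S=\mathbf{I}\mathbb{N}_{\infty}$: $T$ is a $T_1$ semitopological semigroup containing $S$ as a dense discrete submonoid, and $S$ contains $\mathscr{C}_{\mathbb{N}}$ as a submonoid. I would then simply invoke Theorem~\ref{theorem-3.6} to conclude that whenever $I=T\setminus\mathbf{I}\mathbb{N}_{\infty}\neq\varnothing$, the set $I$ is an ideal of $T$, which is precisely the assertion of the corollary.

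I do not anticipate a genuine obstacle here, since the statement is a specialization rather than a new result; the only thing requiring a moment's care is the verification that the generators of $\mathscr{C}_{\mathbb{N}}$ are partial isometries, but this is immediate from the translation-invariance of the metric $d(n,m)=|n-m|$. The corollary therefore follows formally from Theorem~\ref{theorem-3.6}.
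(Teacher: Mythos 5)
Your proposal is correct and takes essentially the same route as the paper: Corollary~\ref{corollary-3.6-1} is stated there as an immediate specialization of Theorem~\ref{theorem-3.6} with $S=\mathbf{I}\mathbb{N}_{\infty}$, whose hypotheses hold since $\mathbf{I}\mathbb{N}_{\infty}$ is an inverse submonoid of $\mathscr{I}_{\infty}^{\,\Rsh\!\!\!\nearrow}(\mathbb{N})$ containing $\mathscr{C}_{\mathbb{N}}$. Your explicit verification that the generators of $\mathscr{C}_{\mathbb{N}}$ are cofinite partial isometries, so that $\mathscr{C}_{\mathbb{N}}\subseteq\mathbf{I}\mathbb{N}_{\infty}$ as a submonoid, is exactly the check the paper leaves implicit.
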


\begin{proposition}\label{proposition-3.7}
Let $S$ be an inverse subsemigroup of $\mathscr{I}_{\infty}^{\,\Rsh\!\!\!\nearrow}(\mathbb{N})$ such that $S$ contains $\mathscr{C}_{\mathbb{N}}$ as a submonoid.
Let $T$ be a Hausdorff topological semigroup which contains $S$ as a dense discrete subsemigroup. Then for every $\gamma\in S$ the set
\begin{equation*}
    D_\gamma=\left\{(\chi,\varsigma)\in S \times S\mid     \chi\cdot \varsigma=\gamma\right\}
\end{equation*}
is a closed-and-open subset of $T\times T$.
\end{proposition}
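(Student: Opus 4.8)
The plan is to establish openness and closedness of $D_\gamma$ in $T\times T$ separately, with openness being essentially formal and closedness resting on the ideal structure of $T\setminus S$ supplied by Theorem~\ref{theorem-3.6}. For openness I would first recall, exactly as in the proof of Theorem~\ref{theorem-3.6}, that by Lemma~3 of \cite{Gutik-Savchuk-2017} the subsemigroup $S$ is open in $T$; since $S$ is discrete, for each $s\in S$ the singleton $\{s\}$ is open in $S$ and hence in $T$, so every point of $S$ is isolated in $T$. Consequently every point of $S\times S$ is isolated in $T\times T$, and as $D_\gamma\subseteq S\times S$ it follows at once that $D_\gamma$ is open.

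For closedness I would take an arbitrary point $(x,y)$ in the closure of $D_\gamma$ in $T\times T$ and prove that $(x,y)\in D_\gamma$. Since $T$ is a topological semigroup, the multiplication map $m\colon T\times T\to T$, $(\chi,\varsigma)\mapsto\chi\cdot\varsigma$, is continuous and satisfies $m(D_\gamma)\subseteq\{\gamma\}$; because $T$ is Hausdorff the singleton $\{\gamma\}$ is closed, so $m\big(\overline{D_\gamma}\big)\subseteq\overline{m(D_\gamma)}\subseteq\{\gamma\}$ and hence $x\cdot y=\gamma\in S$. It then remains to confirm that both coordinates lie in $S$. If $T=S$ this is automatic; otherwise $I=T\setminus S\neq\varnothing$ is an ideal of $T$ by Theorem~\ref{theorem-3.6}, so $x\in I$ would force $\gamma=x\cdot y\in I\cdot T\subseteq I$ and $y\in I$ would force $\gamma=x\cdot y\in T\cdot I\subseteq I$, each contradicting $\gamma\in S$. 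Thus $x,y\in S$ together with $x\cdot y=\gamma$ give $(x,y)\in D_\gamma$, and $D_\gamma$ is closed.

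I expect the closedness to be the only genuine obstacle, and within it the delicate point is to exclude limit points $(x,y)$ of $D_\gamma$ having a coordinate outside $S$: a priori a net in $D_\gamma$ might converge to such a pair while still satisfying $x\cdot y=\gamma$. The device that removes this possibility is precisely that $T\setminus S$ is a two-sided ideal, which pushes any product having a factor in $T\setminus S$ back into $T\setminus S$ and thereby contradicts $\gamma\in S$. Once Theorem~\ref{theorem-3.6} is invoked the remaining argument is short and purely formal, so the real content of the proposition is the transfer of the ideal property to the closure computation.
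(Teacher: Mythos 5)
Your proof is correct and takes essentially the same route as the paper's: openness follows from $S$ being open and discrete in $T$ (Lemma~3 of \cite{Gutik-Savchuk-2017}), and closedness from the continuity of multiplication combined with the ideal property of $T\setminus S$ given by Theorem~\ref{theorem-3.6}. The only differences are cosmetic: you argue closedness directly on closure points instead of by contradiction via an accumulation point, you treat the case $T=S$ explicitly, and you correctly observe that a single coordinate lying in $T\setminus S$ already yields the contradiction (the paper's wording asserts both coordinates lie in the ideal, which is stronger than what follows, though harmless).
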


\begin{proof}
Since $S$ is a discrete subspace of $T$ by Lemma~3 \cite{Gutik-Savchuk-2017} we have that $D_\gamma$ is an open subset of $T\times T$.

Suppose that there exists $\gamma\in S$ such that $D_\gamma$ is a non-closed subset of $T\times T$. Then there exists an accumulation point $(\alpha,\beta)\in T\times T$ of the set $D_\gamma$. The continuity of the semigroup operation in $T$ implies that $\alpha\cdot\beta=\gamma$. But $S\times S$ is a discrete subspace of $T\times T$ and hence by Theorem~\ref{theorem-3.6}, the points $\alpha$ and $\beta$ belong to the ideal $I=T\setminus S$ and hence $\alpha\cdot \beta\in T\setminus S$ cannot be equal to $\gamma$.
\end{proof}

\begin{theorem}\label{theorem-3.8}
Let $S$ be an inverse subsemigroup of $\mathscr{I}_{\infty}^{\,\Rsh\!\!\!\nearrow}(\mathbb{N})$ such that $S$ contains $\mathscr{C}_{\mathbb{N}}$ as a submonoid.
If a $T_1$ topological semigroup $T$ contains $S$ as a dense discrete subsemigroup then the square $T\times T$ cannot be feebly compact.
\end{theorem}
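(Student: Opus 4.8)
The plan is to argue by contradiction: assuming $T\times T$ is feebly compact, I would produce an infinite locally finite family of non-empty open subsets of $T\times T$, which is impossible, since adjoining one more open set turns such a family into an infinite locally finite open cover and thus contradicts the definition of feeble compactness. The witnessing family comes from the bicyclic structure inside $\mathscr{C}_{\mathbb{N}}\subseteq S$. Writing $\alpha,\beta$ for the generators of $\mathscr{C}_{\mathbb{N}}$ as in Remark~\ref{remark-1.1}, so that $\alpha^n\beta^n=\mathbb{I}$ for every $n\in\mathbb{N}$, I would set
\[
A=\left\{(\alpha^n,\beta^n)\colon n\in\mathbb{N}\right\}\subseteq S\times S,
\]
which is infinite because the shifts $\alpha^n$ are pairwise distinct. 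The first routine observation is that, by Lemma~3 of \cite{Gutik-Savchuk-2017}, $S$ is open and discrete in $T$; hence $S\times S$ is an open discrete subspace of $T\times T$ and each singleton $\{(\alpha^n,\beta^n)\}$ is open in $T\times T$.

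The heart of the proof is to show that $A$ is \emph{closed} in $T\times T$. Let $\mu\colon T\times T\to T$ denote the (jointly continuous) multiplication. Since $\mu(A)=\{\mathbb{I}\}$ and $T$ is $T_1$, continuity yields $\mu(\overline{A})\subseteq\overline{\mu(A)}=\overline{\{\mathbb{I}\}}=\{\mathbb{I}\}$, so every $(x,y)\in\overline{A}$ satisfies $xy=\mathbb{I}$. If $(x,y)\notin S\times S$, then $x\in I$ or $y\in I$ with $I=T\setminus S$, and by Theorem~\ref{theorem-3.6} the set $I$ is an ideal of $T$; hence $xy\in I$, contradicting $xy=\mathbb{I}\in S$. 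Therefore $(x,y)\in S\times S$, and because $S\times S$ is open and discrete the neighbourhood $\{(x,y)\}$ must meet $A$, forcing $(x,y)\in A$. Thus $\overline{A}\subseteq A$, i.e. $A$ is closed.

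This closedness step is the main obstacle, and it is exactly the place where care is needed: it parallels Proposition~\ref{proposition-3.7}, but that proposition is stated for \emph{Hausdorff} $T$, whereas Theorem~\ref{theorem-3.8} only assumes $T_1$. The point I would stress is that the argument survives the weaker hypothesis, since it uses nothing about $T$ beyond the $T_1$-closedness of the singleton $\{\mathbb{I}\}$ (to force $xy=\mathbb{I}$) together with the ideal property supplied by Theorem~\ref{theorem-3.6}.

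Finally, being closed and discrete makes the family $\mathscr{U}=\{\{a\}\colon a\in A\}$ locally finite: a point outside $A$ has the neighbourhood $(T\times T)\setminus A$ meeting no member, while a point of $A$ is isolated in $S\times S$ and so meets exactly one member. Adjoining $(T\times T)\setminus A$ to $\mathscr{U}$ gives an infinite locally finite open cover of $T\times T$ (no finite subfamily covers, since each $(\alpha^n,\beta^n)$ lies only in its own singleton), contradicting feeble compactness. The degenerate case $I=\varnothing$ is automatically included, for then $T=S$ is an infinite discrete space and $T\times T$ is plainly not feebly compact.
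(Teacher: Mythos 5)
Your proof is correct, and it follows essentially the same route as the paper: exhibit an infinite set of pairs in $S\times S$ whose products equal $\mathbb{I}$, show it is clopen and discrete in $T\times T$ using the openness/discreteness of $S$ (Lemma~3 of \cite{Gutik-Savchuk-2017}), the joint continuity of multiplication, and the ideal property of $I=T\setminus S$ (Theorem~\ref{theorem-3.6}), and then contradict feeble compactness. The paper does this by citing Proposition~\ref{proposition-3.7} to get that the whole fibre $D_{c}=\{(\chi,\varsigma)\in S\times S\colon \chi\varsigma=c\}$ is clopen, and then observing that $D_{\mathbb{I}}$ contains the infinite family of bicyclic pairs; you instead work directly with the subset $A=\{(\alpha^n,\beta^n)\colon n\in\mathbb{N}\}\subseteq D_{\mathbb{I}}$ and inline the closedness argument. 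Your version has one genuine advantage worth keeping: Proposition~\ref{proposition-3.7} is stated for \emph{Hausdorff} $T$, while Theorem~\ref{theorem-3.8} assumes only $T_1$, so the paper's proof as written quotes a lemma whose hypothesis is formally stronger than what the theorem provides. You correctly observe that the closedness argument only needs $\overline{\{\mathbb{I}\}}=\{\mathbb{I}\}$ plus the ideal property, so it goes through under $T_1$; you also derive the contradiction from the cover form of the definition of feeble compactness (adjoining $(T\times T)\setminus A$ to the singletons) rather than the Hausdorff-only characterization via locally finite families of open sets, and you handle the degenerate case $T=S$ that the hypothesis $I\neq\varnothing$ of Theorem~\ref{theorem-3.6} would otherwise leave out. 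These are exactly the points where the paper is slightly sloppy, so your write-up is, if anything, the more careful of the two.
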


\begin{proof}
By Proposition~\ref{proposition-3.7}, for every $c\in S$ the square $T\times T$ contains an open-and-closed discrete subspace $D_c$.   If we identify the elements of the semigroup $\mathscr{C}_{\mathbb{N}}$  with the elements the bicyclic monoid ${\mathscr{C}}(p,q)$ by an isomorphism $\mathfrak{h}\colon {\mathscr{C}}(p,q)\to \mathscr{C}_{\mathbb{N}}$, then the subspace $D_c$ contains an infinite subset
\begin{equation*}
\left\{\left((q^i)\mathfrak{h},(p^i)\mathfrak{h}\right)\colon i\in\mathbb{N}_0\right\}
\end{equation*}
and hence the set $D_c$ is infinite. This implies that the square $S\times S$ is not feebly compact.
\end{proof}

A topological semigroup $S$ is called
\emph{$\Gamma$-compact} if for every $x\in S$ the closure of the set
$\{x,x^2,x^3,\ldots\}$ is compact in $S$ (see
\cite{Hildebrant-Koch-1986}). The results obtained in
\cite{Anderson-Hunter-Koch-1965}, \cite{Banakh-Dimitrova-Gutik-2009},
\cite{Banakh-Dimitrova-Gutik-2010}, \cite{Gutik-Repovs-2007},
\cite{Hildebrant-Koch-1986}     imply the following

\begin{corollary}\label{corollary-3.9}
Let $S$ be an inverse subsemigroup of $\mathscr{I}_{\infty}^{\,\Rsh\!\!\!\nearrow}(\mathbb{N})$ such that $S$ contains $\mathscr{C}_{\mathbb{N}}$ as a submonoid. If a Hausdorff topological semigroup $T$ satisfies one of the following conditions:
\begin{itemize}
  \item[$(i)$] $T$ is compact;
  \item[$(ii)$] $T$ is $\Gamma$-compact;
  \item[$(iii)$] $T$ is a countably compact topological inverse semigroup;
  \item[$(iv)$] the square $T\times T$ is countably compact;
  \item[$(v)$] the square $T\times T$ is a Tychonoff pseudocompact space,
\end{itemize}
then $T$ does not contain the semigroup $S$ and for every homomorphism $\mathfrak{h}\colon S\to T$ the image $(S)\mathfrak{h}$ is a cyclic subgroup of $T$. Moreover, for every homomorphism $\mathfrak{h}\colon S\to T$ there exists a unique homomorphism $\mathfrak{u}_\mathfrak{h}\colon \mathbb{Z}(+)\to T$ such that the following diagram
\begin{equation*}
\xymatrix{
S\ar[r]^{\mathfrak{h}}\ar[d]_{\mathfrak{C}_{\mathbf{mg}}^{\sharp}} & T\\
\mathbb{Z}(+)\ar[ru]_{\mathfrak{u}_\mathfrak{h}}
}
\end{equation*}
commutes.
\end{corollary}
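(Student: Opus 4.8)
The plan is to combine the structural results already established—Theorems~\ref{theorem-2.4}, \ref{theorem-2.6}, and \ref{theorem-2.8}—with the classical facts about embeddings of the bicyclic semigroup into compact-like topological semigroups. The key observation is that since $S$ contains $\mathscr{C}_{\mathbb{N}}$, which is isomorphic to the bicyclic monoid by Remark~\ref{remark-1.1}, any topological semigroup $T$ that contains $S$ would in particular contain a topological copy of the bicyclic semigroup. The cited literature provides, for each of the conditions $(i)$--$(v)$, a theorem asserting that the corresponding class of topological semigroups cannot contain the bicyclic monoid: Andersen's theorem and the work on compact and $\Gamma$-compact semigroups \cite{Anderson-Hunter-Koch-1965, Hildebrant-Koch-1986}, together with the results on countable and pseudocompact embeddings \cite{Banakh-Dimitrova-Gutik-2009, Banakh-Dimitrova-Gutik-2010, Gutik-Repovs-2007}. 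Thus the first half of the statement—that $T$ does not contain $S$—would follow directly once we note that $S \supseteq \mathscr{C}_{\mathbb{N}} \cong \mathscr{C}(p,q)$.

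\emph{First} I would treat the non-embedding assertion. For each case I would invoke the relevant cited result to conclude that $T$ contains no isomorphic copy of the bicyclic monoid; since any embedding of $S$ into $T$ would restrict to an embedding of $\mathscr{C}_{\mathbb{N}}$, and the latter is bicyclic, no such embedding of $S$ exists. \emph{Next} I would analyze an arbitrary homomorphism $\mathfrak{h}\colon S\to T$. Its restriction to $\mathscr{C}_{\mathbb{N}}$ is a homomorphism of the bicyclic monoid, which by Corollary~1.32 of \cite{Clifford-Preston-1961-1967} is either an isomorphism or has cyclic group image. The embedding-obstruction results rule out the isomorphic case inside these compact-like $T$, so the restriction of $\mathfrak{h}$ to $\mathscr{C}_{\mathbb{N}}$ must collapse onto a cyclic group. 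In particular $\mathfrak{h}$ is not injective, and then by Theorem~\ref{theorem-2.8} the semigroup $S$ is simple, so the kernel congruence of $\mathfrak{h}$ cannot be the identity; being a non-identity congruence that maps into a group, Theorem~\ref{theorem-2.4} (together with the fact that $\mathbf{I}\mathbb{N}_{\infty}^{[\underline{1}]}\subseteq S$ forces non-identity congruences to be group congruences) forces it to contain the minimum group congruence $\mathfrak{C}_{\mathbf{mg}}$, whence $(S)\mathfrak{h}$ is a quotient of the group $S/\mathfrak{C}_{\mathbf{mg}}\cong\mathbb{Z}(+)$ by Theorem~\ref{theorem-2.6}. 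Since every homomorphic image of $\mathbb{Z}(+)$ is cyclic, $(S)\mathfrak{h}$ is a cyclic subgroup of $T$.

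\emph{Finally}, for the universal-property diagram I would appeal to the factorization already packaged in Corollary~\ref{corollary-2.7}: the homomorphism $\mathfrak{h}$ is non-injective, so it factors uniquely through $\mathfrak{C}_{\mathbf{mg}}^{\sharp}\colon S\to\mathbb{Z}(+)$ via a unique $\mathfrak{u}_\mathfrak{h}\colon\mathbb{Z}(+)\to T$ making the triangle commute. The commutativity and uniqueness are then immediate from the universal property of the quotient by the minimum group congruence, since any homomorphism collapsing $\mathfrak{C}_{\mathbf{mg}}$ determines and is determined by a homomorphism out of $\mathbb{Z}(+)$.

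\textbf{The main obstacle} I anticipate is purely bookkeeping rather than conceptual: one must verify for \emph{each} of the five conditions $(i)$--$(v)$ that the corresponding cited theorem genuinely applies, i.e.\ that the hypotheses on $T$ (compactness, $\Gamma$-compactness, countable compactness of $T$ or of $T\times T$, Tychonoff pseudocompactness of $T\times T$) are exactly those under which the referenced papers prove the bicyclic semigroup cannot embed densely or at all. For condition $(v)$ in particular, the argument must route through Theorem~\ref{theorem-3.8}, since it is the pseudocompactness (equivalently feeble compactness, via Theorem~3.10.22 of \cite{Engelking-1989}) of the \emph{square} that yields the contradiction; one uses that a feebly compact space cannot contain an infinite open-and-closed discrete subspace, which is precisely the conclusion of Theorem~\ref{theorem-3.8}. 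Care is needed to confirm that the non-injectivity of $\mathfrak{h}$ is available in every case before invoking the simplicity of $S$ and the congruence dichotomy.
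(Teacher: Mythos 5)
Your outer skeleton is the paper's intended route (the paper gives no proof beyond its citations): part one follows because $S\supseteq\mathscr{C}_{\mathbb{N}}\cong\mathscr{C}(p,q)$ and the cited results exclude the bicyclic monoid from each class $(i)$--$(v)$, and part two should end with a factorization through $\mathbb{Z}(+)$. But the middle of your argument has a genuine gap: Theorem~\ref{theorem-2.4} and Corollary~\ref{corollary-2.7} both carry the hypothesis that $S$ contains $\mathbf{I}\mathbb{N}_{\infty}^{[\underline{1}]}$ as a submonoid, whereas Corollary~\ref{corollary-3.9} assumes only that $S$ contains $\mathscr{C}_{\mathbb{N}}$; your parenthetical ``fact'' that $\mathbf{I}\mathbb{N}_{\infty}^{[\underline{1}]}\subseteq S$ is simply not available (consider $S=\mathscr{C}_{\mathbb{N}}$ or $S=\mathbf{I}\mathbb{N}_{\infty}$). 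Worse, the intermediate claim you extract from it --- that every non-identity congruence on $S$ is a group congruence --- is \emph{false} under the actual hypothesis: the paper itself recalls that $\mathbf{I}\mathbb{N}_{\infty}$, which contains $\mathscr{C}_{\mathbb{N}}$ as a submonoid, admits a non-identity non-group congruence \cite{Gutik-Savchuk-2018}. So the inference ``$\mathfrak{h}$ is non-injective, hence $\ker\mathfrak{h}\supseteq\mathfrak{C}_{\mathbf{mg}}$'' does not follow from what you invoke. (Two smaller points: citing Theorem~\ref{theorem-2.8} to conclude the kernel is non-identity is a non sequitur, since non-injectivity says exactly that; and Theorem~\ref{theorem-3.8} concerns only \emph{dense discrete} copies of $S$, so condition $(v)$ for an arbitrary embedding must come from the cited result of \cite{Banakh-Dimitrova-Gutik-2010}, not from Theorem~\ref{theorem-3.8}.)

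The repair uses the stronger information you actually established: the kernel congruence $\mathfrak{K}$ of $\mathfrak{h}$ restricts to a \emph{non-identity congruence on the bicyclic monoid} $\mathscr{C}_{\mathbb{N}}$, hence by Corollary~1.32 of \cite{Clifford-Preston-1961-1967} this restriction is a group congruence; in particular $\mathbb{I}\,\mathfrak{K}\,\beta^{n}\alpha^{n}$ for every $n$, where $\alpha,\beta$ are as in Remark~\ref{remark-1.1}. Now every idempotent $\varepsilon\in E(S)\subseteq E(\mathscr{I}_{\infty}^{\,\Rsh\!\!\!\nearrow}(\mathbb{N}))$ is the identity map of a cofinite subset of $\mathbb{N}$, so $\varepsilon\cdot\beta^{n}\alpha^{n}=\beta^{n}\alpha^{n}$ for all sufficiently large $n$ (this is Lemma~2.6 of \cite{Gutik-Repovs-2011}, already used in the proof of Lemma~\ref{lemma-2.1-3}). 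Multiplying $\mathbb{I}\,\mathfrak{K}\,\beta^{n}\alpha^{n}$ by $\varepsilon$ gives $\varepsilon\,\mathfrak{K}\,\varepsilon\cdot\beta^{n}\alpha^{n}=\beta^{n}\alpha^{n}$, and then transitivity yields $\varepsilon\,\mathfrak{K}\,\mathbb{I}$. Thus all idempotents of $S$ lie in a single $\mathfrak{K}$-class; since every idempotent of the inverse semigroup $(S)\mathfrak{h}$ has the form $(ss^{-1})\mathfrak{h}$, the image $(S)\mathfrak{h}$ has a unique idempotent and is therefore a group. Consequently $\mathfrak{K}$ is a group congruence, so $\mathfrak{K}\supseteq\mathfrak{C}_{\mathbf{mg}}$, and Theorem~\ref{theorem-2.6} shows that $(S)\mathfrak{h}$ is a homomorphic image of $S/\mathfrak{C}_{\mathbf{mg}}\cong\mathbb{Z}(+)$, i.e.\ a cyclic subgroup of $T$, with $\mathfrak{h}$ factoring through $\mathfrak{C}_{\mathbf{mg}}^{\sharp}$ via a homomorphism $\mathfrak{u}_{\mathfrak{h}}$ that is unique because $\mathfrak{C}_{\mathbf{mg}}^{\sharp}$ is surjective. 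With this substitution for your appeal to Theorem~\ref{theorem-2.4} and Corollary~\ref{corollary-2.7}, the proof goes through.
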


Recall~\cite{DeLeeuwGlicksberg1961} that a \emph{Bohr
compactification} of a topological semigroup $S$ is a~pair $(\beta,
B(S))$ such that $B(S)$ is a compact topological semigroup,
$\beta\colon S\to B(S)$ is a continuous homomorphism, and if
$g\colon S\to T$ is a continuous homomorphism of $S$ into a compact
semigroup $T$, then there exists a unique continuous homomorphism
$f\colon B(S)\to T$ such that the diagram
\begin{equation*}
\xymatrix{ S\ar[rr]^\beta\ar[dr]_g && B(S)\ar[ld]^f\\
& T &}
\end{equation*}
commutes. Then Corollary~\ref{corollary-3.9} and Proposition~2 from \cite{Anderson-Hunter-1969} imply the
following:

\begin{corollary}\label{corollary-3.16}
Let $S$ be an inverse subsemigroup of $\mathscr{I}_{\infty}^{\,\Rsh\!\!\!\nearrow}(\mathbb{N})$ such that $S$ contains $\mathscr{C}_{\mathbb{N}}$ as a submonoid.
The Bohr compactification of the discrete semigroup $S$  is topologically isomorphic to the Bohr compactification of discrete group $\mathbb{Z}(+)$.
\end{corollary}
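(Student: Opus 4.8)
The plan is to combine the universal property of the Bohr compactification with the factorization through $\mathbb{Z}(+)$ supplied by Corollary~\ref{corollary-3.9}; the whole argument is then a diagram chase whose only genuinely semigroup-theoretic inputs are that corollary and the surjectivity of $\mathfrak{C}_{\mathbf{mg}}^{\sharp}$. Write $(\beta_S,B(S))$ and $(\beta_{\mathbb{Z}},B(\mathbb{Z}(+)))$ for the Bohr compactifications of the discrete semigroup $S$ and the discrete group $\mathbb{Z}(+)$, recalling that by Theorem~\ref{theorem-2.6} the natural homomorphism $\mathfrak{C}_{\mathbf{mg}}^{\sharp}\colon S\to\mathbb{Z}(+)$ is a surjection. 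Since both $B(S)$ and $B(\mathbb{Z}(+))$ are compact Hausdorff topological semigroups, Corollary~\ref{corollary-3.9} is applicable to homomorphisms whose target is either of them.

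First I would produce a continuous homomorphism $f\colon B(S)\to B(\mathbb{Z}(+))$. The composite $\mathfrak{C}_{\mathbf{mg}}^{\sharp}\cdot\beta_{\mathbb{Z}}\colon S\to B(\mathbb{Z}(+))$ is a homomorphism of the discrete semigroup $S$ into a compact semigroup (hence continuous), so the universal property of $B(S)$ yields a unique continuous homomorphism $f$ with $\beta_S\cdot f=\mathfrak{C}_{\mathbf{mg}}^{\sharp}\cdot\beta_{\mathbb{Z}}$. For the reverse arrow I would feed the homomorphism $\beta_S\colon S\to B(S)$ into Corollary~\ref{corollary-3.9} (its target $B(S)$ satisfies condition~$(i)$): this gives a unique homomorphism $\mathfrak{u}\colon\mathbb{Z}(+)\to B(S)$ with $\beta_S=\mathfrak{C}_{\mathbf{mg}}^{\sharp}\cdot\mathfrak{u}$. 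As $\mathbb{Z}(+)$ is discrete, $\mathfrak{u}$ is automatically continuous, so the universal property of $B(\mathbb{Z}(+))$ produces a unique continuous homomorphism $g\colon B(\mathbb{Z}(+))\to B(S)$ with $\beta_{\mathbb{Z}}\cdot g=\mathfrak{u}$.

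It remains to verify that $f$ and $g$ are mutually inverse, using the uniqueness clauses of the two universal properties. For $f\cdot g$ I would compute $\beta_S\cdot(f\cdot g)=(\mathfrak{C}_{\mathbf{mg}}^{\sharp}\cdot\beta_{\mathbb{Z}})\cdot g=\mathfrak{C}_{\mathbf{mg}}^{\sharp}\cdot\mathfrak{u}=\beta_S$; since both $f\cdot g$ and $\mathrm{id}_{B(S)}$ are continuous endomorphisms of $B(S)$ precomposing with $\beta_S$ to give $\beta_S$, uniqueness in the universal property of $B(S)$ forces $f\cdot g=\mathrm{id}_{B(S)}$. For $g\cdot f$ I would first observe $\mathfrak{C}_{\mathbf{mg}}^{\sharp}\cdot(\mathfrak{u}\cdot f)=\beta_S\cdot f=\mathfrak{C}_{\mathbf{mg}}^{\sharp}\cdot\beta_{\mathbb{Z}}$, whence left-cancelling the \emph{surjective} map $\mathfrak{C}_{\mathbf{mg}}^{\sharp}$ gives $\mathfrak{u}\cdot f=\beta_{\mathbb{Z}}$; then $\beta_{\mathbb{Z}}\cdot(g\cdot f)=\mathfrak{u}\cdot f=\beta_{\mathbb{Z}}$, and uniqueness for $B(\mathbb{Z}(+))$ yields $g\cdot f=\mathrm{id}_{B(\mathbb{Z}(+))}$. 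Thus $f$ is a topological isomorphism.

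I do not expect any single hard computation; the main obstacle is organizational, namely keeping the two distinct universal properties separate and invoking the right uniqueness clause at each cancellation. The one step where the specific algebra of $S$ is actually used is the left-cancellation of $\mathfrak{C}_{\mathbf{mg}}^{\sharp}$ in the computation of $g\cdot f$, which rests on its surjectivity from Theorem~\ref{theorem-2.6}; everything else is formal once Corollary~\ref{corollary-3.9} (which in turn relies on Proposition~2 of \cite{Anderson-Hunter-1969} to guarantee that compact homomorphic images of $S$ collapse onto cyclic groups) is granted.
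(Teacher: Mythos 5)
Your proof is correct, and it rests on exactly the same substantive input as the paper: Corollary~\ref{corollary-3.9}, which gives the (unique) factorization of every homomorphism of $S$ into a compact semigroup through $\mathfrak{C}_{\mathbf{mg}}^{\sharp}\colon S\to\mathbb{Z}(+)$. The difference is one of packaging. The paper writes no diagram chase at all; it delegates the final step to Proposition~2 of \cite{Anderson-Hunter-1969}, which is precisely the abstract statement that such a factorization property forces the Bohr compactifications of $S$ and of $\mathbb{Z}(+)$ to coincide. You instead make that step explicit and self-contained: the two arrows $f$ and $g$ come from the universal properties of $B(S)$ and $B(\mathbb{Z}(+))$, and the two composites are identified with the identities by the uniqueness clauses. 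Both cancellation arguments are sound; note that in the computation of $g\cdot f$ you could also avoid invoking the surjectivity of $\mathfrak{C}_{\mathbf{mg}}^{\sharp}$ altogether, since both $\mathfrak{u}\cdot f$ and $\beta_{\mathbb{Z}}$ factor the homomorphism $\mathfrak{C}_{\mathbf{mg}}^{\sharp}\cdot\beta_{\mathbb{Z}}$ through $\mathbb{Z}(+)$, so the uniqueness clause of Corollary~\ref{corollary-3.9} already gives $\mathfrak{u}\cdot f=\beta_{\mathbb{Z}}$. What your version buys is independence from the external reference, at the cost of a page of formal verification that the paper chose to cite away.
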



\begin{thebibliography}{11}



\bibitem{Andersen-1952}
O. Andersen,
\emph{Ein Bericht uber die Struktur abstrakter Halbgruppen},
PhD Thesis, Hamburg, 1952.

\bibitem{Anderson-Hunter-1969}
L. W. Anderson and R. P. Hunter, \emph{On the compactification of
certain semigroups}, Contrib. Extens. Theory Topol. Struct. Proc.
Sympos. Berlin 1967. VEB Deutscher Verlag der Wissenschaften,
Berlin, 1969, 21---27.

\bibitem{Anderson-Hunter-Koch-1965}
L. W. Anderson, R. P. Hunter, and R. J. Koch,
\emph{Some results on stability in semigroups},
Trans. Amer. Math. Soc. \textbf{117} (1965), 521--529.

\bibitem{Arkhangelskii-1992}
A.~V.~Arkhangel'skii, {\em Topological Function Spaces}, Kluwer
Publ., Dordrecht, 1992.

\bibitem{Banakh-Dimitrova-Gutik-2009}
T.~Banakh, S.~Dimitrova, and O.~Gutik,
\emph{The Rees-Suschkiewitsch Theorem for simple
topological semigroups}, Mat. Stud. \textbf{31}:2 (2009), 211--218.

\bibitem{Banakh-Dimitrova-Gutik-2010}
T.~Banakh, S.~Dimitrova, and O.~Gutik,
\emph{Embedding the bicyclic semigroup into countably compact topological semigroups},
Topology Appl. \textbf{157}:18 (2010), 2803--2814.

\bibitem{Bardyla-2016}
S. Bardyla,
\emph{Classifying locally compact semitopological polycyclic monoids},
Math. Bull. Shevchenko Sci. Soc. \textbf{13} (2016), 21--28.

\bibitem{Bardyla-2016a}
S. Bardyla,
\emph{On a semitopological $\alpha$-bicyclic monoid},
Visn. L'viv. Univ., Ser. Mekh.-Mat. \textbf{81} (2016), 9--22.

\bibitem{Bardyla-2018}
S. Bardyla,
\emph{On locally compact shift-continuous topologies on the $\alpha$-bicyclic monoid},
 Topol. Algebra Appl. \textbf{6} (2018), 34--42.

\bibitem{Bardyla-2018a}
S. Bardyla,
\emph{On locally compact semitopological graph inverse semigroups},
Mat. Stud. \textbf{49}:1 (2018),  19--28.

\bibitem{Bardyla-20??}
S. Bardyla,
\emph{On locally compact topological graph inverse semigroups},
 Topology Appl. \textbf{267}  (2019), 106873.

\bibitem{Bardyla-Gutik-2016}
S. Bardyla and O. Gutik,
\emph{On a semitopological polycyclic monoid},
Algebra Discrete Math. \textbf{21}:2 (2016), 163--183.

\bibitem{Bertman-West-1976}
M.~O.~Bertman and T.~T.~West,
{\it Conditionally compact bicyclic semitopological semigroups},
Proc. Roy. Irish Acad. {\bf A76}:21--23 (1976), 219--226.

\bibitem{Bezushchak-2004}
O. Bezushchak,
\emph{On growth of the inverse semigroup of partially defined co-finite automorphisms of integers},
Algebra Discrete Math. no. 2 (2004), 45--55.



\bibitem{Bezushchak-2008}
O. Bezushchak,
\emph{Green's relations of the inverse semigroup of partially defined co-finite isometries of discrete line},
Visn., Ser. Fiz.-Mat. Nauky, Kyiv. Univ. Im. Tarasa Shevchenka no. 1 (2008), 12--16.

\bibitem{Carruth-Hildebrant-Koch-1983-1986}
J.~H.~Carruth, J.~A.~Hildebrant, and  R.~J.~Koch,
\emph{The Theory of Topological Semigroups}, Vol. I, Marcel
Dekker, Inc., New York and Basel, 1983; Vol. II, Marcel Dekker,
Inc., New York and Basel, 1986.

\bibitem{Chuchman-Gutik-2010}
I.~Ya.~Chuchman and O.~V.~Gutik,
\emph{Topological monoids of almost monotone injective co-finite partial selfmaps of the set of positive integers},
Carpathian Math. Publ. \textbf{2}:1 (2010) 119--132.


\bibitem{Clifford-Preston-1961-1967}
A. H.~Clifford and  G. B.~Preston,
\emph{The Algebraic Theory of Semigroups},
Vol. I., Amer. Math. Soc. Surveys 7, Pro\-vidence, R.I., 1961; Vol. II., Amer. Math. Soc. Surveys 7, Providence, R.I., 1967.

\bibitem{DeLeeuwGlicksberg1961}
K.~DeLeeuw, and I.~Glicksberg, \emph{Almost-periodic functions on
semigroups}, Acta Math. {\bf 105} (1961), 99--140.

\bibitem{Doroshenko2005}
V.~Doroshenko,
\emph{Generators and relations for the semigroups of increasing functions on $\mathbb{N}$ and $\mathbb{Z}$},
Algebra Discr. Math. no.~4 (2005),  1---15.

\bibitem{Doroshenko2009}
V.~Doroshenko,
\emph{On semigroups of order-preserving transformations of countable linearly ordered sets}, Ukr. Mat. Zh. \textbf{61}:6 (2009), 723---732; translation in Ukr. Math. J. \textbf{61}:6 (2009), 859---872.


\bibitem{Engelking-1989}
R.~Engelking,
\emph{General Topology}, 2nd ed., Heldermann, Berlin, 1989.

\bibitem{Fihel-Gutik-2011}
I. R. Fihel and O. V. Gutik,
\emph{On the closure of the extended bicyclic semigroup},
Carpathian Math. Publ. \textbf{3}:2 (2011), 131--157.

\bibitem{Gutik-2015}
O. Gutik,
\emph{On the dichotomy of a locally compact semitopological bicyclic monoid with adjoined zero},
Visn. L'viv. Univ., Ser. Mekh.-Mat. \textbf{80} (2015), 33--41.

\bibitem{Gutik-2016}
O. Gutik,
\emph{Topological properties of Taimanov semigroups},
Math. Bull. Shevchenko Sci. Soc. \textbf{13} (2016), 29--34.

\bibitem{Gutik-2018}
O. Gutik,
\emph{On locally compact semitopological $0$-bisimple inverse $\omega$-semigroups},
Topol. Algebra Appl. \textbf{6} (2018), 77--101.

\bibitem{Gutik-Maksymyk-2016}
O. Gutik and K. Maksymyk,
\emph{On semitopological interassociates of the bicyclic monoid},
Visn. L'viv. Univ., Ser. Mekh.-Mat. \textbf{82 }(2016), 98--108.

\bibitem{Gutik-Maksymyk-2016a}
O. V. Gutik and K. M. Maksymyk,
\emph{On semitopological bicyclic extensions of linearly ordered groups},
Mat. Metody Fiz.-Mekh. Polya \textbf{59}:4 (2016),  31--43.

\bibitem{Gutik-Pozdnyakova-2014}
O.~Gutik and I.~Pozdnyakova,
\emph{On monoids of monotone injective partial selfmaps of $L_n\times_{\operatorname{lex}}\mathbb{Z}$ with co-finite domains and images},
Algebra Discr. Math. \textbf{17}:2 (2014), 256--279.

\bibitem{Gutik-Repovs-2007}
O.~Gutik and D.~Repov\v{s},
\emph{On countably compact $0$-simple topological inverse semigroups},
Semigroup Forum \textbf{75}:2 (2007), 464--469.

\bibitem{Gutik-Repovs-2011}
O.~Gutik and D.~Repov\v{s},
\emph{Topological monoids of monotone, injective partial selfmaps of $\mathbb{N}$ having cofinite domain and image},
Stud. Sci. Math. Hungar. \textbf{48}:3 (2011), 342--353.

\bibitem{Gutik-Repovs-2012}
O.~Gutik and D.~Repov\v{s},
\emph{On monoids of injective partial selfmaps of integers with cofinite domains and images},
Georgian Math. J. \textbf{19}:3 (2012), 511--532.

\bibitem{Gutik-Savchuk-2017}
O.~Gutik and A.~Savchuk,
\emph{On the semigroup $\mathbf{ID}_{\infty}$,}
Visn. Lviv. Univ., Ser. Mekh.-Mat. \textbf{83} (2017), 5--19 (in Ukrainian).

\bibitem{Gutik-Savchuk-2018}
O.~Gutik and A.~Savchuk,
\emph{The semigroup of partial co-finite isometries of positive integers,}
 Bukovyn. Mat. Zh. \textbf{6}:1-2 (2018),  42--51 (in Ukrainian).

\bibitem{Hildebrant-Koch-1986}
J.~A.~Hildebrant and R.~J.~Koch,
{\it Swelling actions of $\Gamma$-compact semigroups}, Semigroup
Forum {\bf 33} (1986), 65--85.

\bibitem{Hogan-1984}
J. W. Hogan,
\emph{The $\alpha$-bicyclic semigroup as a topological semigroup},
Semigroup forum \textbf{28} (1984), 265--271.

\bibitem{Lawson-1998}
M.~Lawson,
\emph{Inverse Semigroups. The Theory of Partial Symmetries},
Singapore: World Scientific, 1998.

\bibitem{Mesyan-Mitchell-Morayne-Peresse-2016}
Z. Mesyan, J. D. Mitchell, M. Morayne, and Y. H. P\'{e}resse,
\emph{Topological graph inverse semigroups},
Topology Appl. \textbf{208} (2016), 106--126.

\bibitem{McFadden-Carroll-1971}
R. McFadden and L. O'Carroll,
\emph{$F$-inverse semigroups},
Proc. Lond. Math. Soc., III. Ser. \textbf{22} (1971), 652--666.

\bibitem{Munn1966}
W.~D.~Munn,
\emph{Uniform semilattices and bisimple inverse semigroups},
Quart. J. Math., \textbf{17}:1 (1966), 151--159.

\bibitem{Munn-1983}
W.~D.~Munn,
\emph{The algebra of a combinatorial inverse semigroup},
J. Lond. Math. Soc., II. Ser. \textbf{27}:1 (1983), 35--38.

\bibitem{Petrich-1984}
M.~Petrich,
\emph{Inverse Semigroups},
John Wiley $\&$ Sons, New York, 1984.

\bibitem{Ruppert-1984}
W.~Ruppert,
\emph{Compact Semitopological Semigroups: An Intrinsic Theory},
Lect. Notes Math., \textbf{1079}, Springer, Berlin, 1984.

\bibitem{Selden_A-1985}
A. A. Selden,
\emph{A nonlocally compact nondiscrete topology for the $\alpha$-bicyclic semigroup}.
Semigroup forum \textbf{31} (1985), 372--374.

\bibitem{Taimanov-1973}
A. D. Taimanov,
\emph{An example of a semigroup which admits only the discrete topology},
Algebra i Logika \textbf{12}:1 (1973), 114--116 (in Russian), English transl. in: Algebra and Logic \textbf{12}:1 (1973), 64--65.

\bibitem{Taimanov-1975}
A.~D.~Taimanov,
{\it On the topologization of commutative semigroups,}
Mat. Zametki \textbf{17}:5 (1975), 745--748 (in Russian), English transl. in:  Math. Notes \textbf{17}:5 (1975), 443--444.


\bibitem{Wagner-1952}
V.~V. Wagner,
\textit{Generalized groups},
Dokl. Akad. Nauk SSSR \textbf{84} (1952), 1119--1122 (in Russian).

\end{thebibliography}
\end{document}